\newcommand{\be}{\begin{equation}}
\newcommand{\ee}{\end{equation}}
\newcommand{\bex}{\begin{equation*}}
\newcommand{\eex}{\end{equation*}}
\newcommand{\bea}{\begin{eqnarray}}
\newcommand{\eea}{\end{eqnarray}}
\newcommand{\beas}{\begin{eqnarray*}}
\newcommand{\eeas}{\end{eqnarray*}}
\newtheorem{theorem}{Theorem}[section]%[section]
\newtheorem{definition}[theorem]{Definition}
\newtheorem{proposition}[theorem]{Proposition}
\newtheorem{lemma}[theorem]{Lemma}
\newtheorem{remark}[theorem]{Remark}
\newtheorem{assumption}{Assumption}[section]
\newproof{pf}{Proof}
\newenvironment{breakablealgorithm}
  {% \begin{breakablealgorithm}
   \begin{center}
     \refstepcounter{algorithm}% New algorithm
     \hrule height.8pt depth0pt \kern2pt% \@fs@pre for \@fs@ruled
     \renewcommand{\caption}[2][\relax]{% Make a new \caption
       {\raggedright\textbf{\ALG@name~\thealgorithm} ##2\par}%
       \ifx\relax##1\relax % #1 is \relax
         \addcontentsline{loa}{algorithm}{\protect\numberline{\thealgorithm}##2}%
       \else % #1 is not \relax
         \addcontentsline{loa}{algorithm}{\protect\numberline{\thealgorithm}##1}%
       \fi
       \kern2pt\hrule\kern2pt
     }
  }{% \end{breakablealgorithm}
     \kern2pt\hrule\relax% \@fs@post for \@fs@ruled
   \end{center}
  }
\journal{journal}
\begin{document}

\begin{frontmatter}

%% Title, authors and addresses

%% use the tnoteref command within \title for footnotes;
%% use the tnotetext command for theassociated footnote;
%% use the fnref command within \author or \address for footnotes;
%% use the fntext command for theassociated footnote;
%% use the corref command within \author for corresponding author footnotes;
%% use the cortext command for theassociated footnote;
%% use the ead command for the email address,
%% and the form \ead[url] for the home page:
%% \title{Title\tnoteref{label1}}
%% \tnotetext[label1]{}
%% \author{Name\corref{cor1}\fnref{label2}}
%% \ead{email address}
%% \ead[url]{home page}
%% \fntext[label2]{}
%% \cortext[cor1]{}
%% \affiliation{organization={},
%%             addressline={},
%%             city={},
%%             postcode={},
%%             state={},
%%             country={}}
%% \fntext[label3]{}

\title{Robust policy iteration for continuous-time stochastic $H_\infty$ control problem with unknown dynamics}

%% use optional labels to link authors explicitly to addresses:
%% \author[label1,label2]{}
%% \affiliation[label1]{organization={},
%%             addressline={},
%%             city={},
%%             postcode={},
%%             state={},
%%             country={}}
%%
%% \affiliation[label2]{organization={},
%%             addressline={},
%%             city={},
%%             postcode={},
%%             state={},
%%             country={}}

\author[1]{Zhongshi Sun}
\ead{stone.sun@mail.sdu.edu.cn}

\author[1]{Guangyan Jia\corref{cor1}}
\ead{jiagy@sdu.edu.cn}

\cortext[cor1]{Corresponding author}

\affiliation[1]{organization={Zhongtai Securities Institute for Financial Studies, Shandong University},%Department and Organization
            %addressline={}, 
            city={Jinan},
            postcode={250100}, 
            state={Shandong},
            country={China}}

\begin{abstract}
%% Text of abstract
In this article, we study a continuous-time stochastic $H_\infty$ control problem based on reinforcement learning (RL) techniques that can be viewed as solving a stochastic linear-quadratic two-person zero-sum differential game (LQZSG). First, we propose an RL algorithm that can iteratively solve stochastic game algebraic Riccati equation based on collected state and control data when all dynamic system information is unknown. In addition, the algorithm only needs to collect data once during the iteration process. Then, we discuss the robustness and convergence of the inner and outer loops of the policy iteration algorithm, respectively, and show that when the error of each iteration is within a certain range, the algorithm can converge to a small neighborhood of the saddle point of the stochastic LQZSG problem. Finally, we applied the proposed RL algorithm to two simulation examples to verify the effectiveness of the algorithm.

\end{abstract}

%%Graphical abstract
%\begin{graphicalabstract}
%\includegraphics{grabs}
%\end{graphicalabstract}

%%Research highlights
% \begin{highlights}
% \item Research highlight 1
% \item Research highlight 2
% \end{highlights}

\begin{keyword}
Reinforcement learning \sep policy iteration \sep stochastic $H_\infty$ control \sep linear-quadratic game \sep robustness.
%% keywords here, in the form: keyword \sep keyword

%% PACS codes here, in the form: \PACS code \sep code

%% MSC codes here, in the form: \MSC code \sep code
%% or \MSC[2008] code \sep code (2000 is the default)

\end{keyword}

\end{frontmatter}

%% \linenumbers

%% main text
\section{Introduction}
In the past few decades, the $H_\infty$ control theory has received a lot of attention and development. This theory studies the controller design problems under worst-case scenarios \citep{bacsar2008h}. The method of solving the saddle point of the infinite-horizon zero-sum game problem (ZSG problem) can be directly applied to the $H_{\infty}$ control problem, where the control term tries to minimize the cost function, while the disturbance term tries to maximize it \citep{vamvoudakis2012online}. The characterization of saddle points for control problems requires solving the Hamilton-Jacobi-Bellman-Isaacs (HJI) equations for nonlinear systems \citep{van19922} and the game theoretic algebraic Riccati equations (GAREs) for linear systems \citep{wu2013simultaneous, sun2016linear}. When the game dynamics and cost functions are known, the analytical solutions of the HJI equations are, in general, not available, and one needs to resort to numerical methods such as grid-based approaches \citep{huang2014automation} to solve these partial differential equations (PDEs) approximately.  For linear systems, \citet{lanzon2008computing} proposed a recursive method to solve GAREs in 2008. In 2010, \citet{feng2010iterative} developed a new algorithm to solve a stochastic GARE arising in a linear-quadratic (LQ) stochastic differential game with state-dependent noises. Then \citet{dragan2011computation} extended to the case where the stochastic system is affected by state- and control-dependent white noise.

The development of approximate/adaptive dynamic programming (ADP) \citep{lewis2013reinforcement,jiang2017robust} and reinforcement learning (RL) \citep{sutton2018reinforcement} theories have significant meanings for solving problems of $H_\infty$ control or differential games with uncertain systems. 
Compared with model-based approaches, RL and ADP approaches focus on how to learn the saddle point from past data to reinforce rewards without knowing the structure of the dynamic system.
In 2011, \citet{vrabie2011adaptive} presented an ADP algorithm to find the saddle point solution of an LQZSG problem without requiring information on internal system dynamics. \citet{wu2013simultaneous} proposed a synchronous policy iterative scheme containing only one iterative loop for solving a linear continuous-time $H_\infty$ control in 2013. Afterward, \citet{li2014integral} developed an RL method for an LQZSG problem with completely unknown dynamics. Moreover, \citet{chen2023online} presented two model-free algorithms that do not need to initialize the stabilizing control policy to find the Nash equilibrium solution. However, the ADP algorithms mentioned above for solving the LQZSG problem are all for deterministic systems, and limited work has been done for the stochastic LQZSG problem. Recently, RL and ADP methods for stochastic systems have been of interest to many researchers. (see, for instance, \citep{bian2016adaptive, liu2019stackelberg, wang2020reinforcement,li2022stochastic, pang2022reinforcement,jia2022policy}). In 2022, \citet{guo2022entropy} and \citet{firoozi2022exploratory} studied a class of entropy-regularized mean-field games in which the exploration-exploitation tradeoff for RL is considered.

Moreover, the robustness of the algorithm is the core and hardest problem in RL design. It is possible that the estimation errors caused by sampling will cause the policy iteration (PI) process to diverge. In 2021, \citet{pang2021robust} studied the robustness of the PI algorithm for the continuous-time LQ control problem. \citet{pang2022reinforcement} also analyzed the PI algorithm for optimal stationary control problem of stochastic linear systems is robust with respect to generating estimation errors during the learning process. Then, \citet{cui2023lyapunov} utilized Lypuanov's direct method to analyze the convergence of the PI algorithm under disturbances. Moreover, \citet{cui2023robust} discussed the robustness of an iterative scheme for solving a mixed $H_{2}/H_{\infty}$ control problem.

In this article, we focus on a stochastic continuous-time $H_\infty$ control problem where the diffusion term in the linear stochastic system contains state and control variables. The problem can be viewed as an infinite-horizon stochastic LQZSG problem. The contributions of this paper are stated below.
\begin{itemize}
  \item Based on the model-based PI algorithm for solving the stochastic GARE, we propose a novel RL algorithm. In contrast to numerical methods that require information about all system matrices \citep{dragan2011computation}, this algorithm uses the ADP technique to iteratively solve the stochastic LQZSG problem only by collecting state and input data. In addition, compared with \citep{sun2023reinforcement}, this RL algorithm removes the assumption that partial system information need to be known, so it is a completely model-free algorithm, and the iterative process of the algorithm only needs to collect data once.
  \item We give a robustness analysis of the PI algorithm and prove that when the error generated in each iteration of the RL algorithm is bounded and small, the solution of the algorithm iteration will converge to the small neighborhood of the optimal solution of the stochastic LQZSG problem. Compared with \citep{pang2022reinforcement}, two levels of loops are required in the process of solving stochastic GARE, so the robustness and convergence analysis of the inner loop and the outer loop need to be done, respectively.
  \item We conduct numerical experiments on a 2-dimensional linear system and a 4-dimensional linear two-mass spring system based on the proposed algorithm. The simulation results show that the algorithm can converge to the optimal solution well.
\end{itemize}

The rest of the paper is organized as follows. In Section 2, we briefly describe the stochastic $H_\infty$ control problem and model-based PI algorithm. Section 3 proposes a completely model-free PI for the stochastic LQZSG problem. Section 4 presents a robust PI algorithm and discusses its robustness. Finally, Section 5 shows two numerical simulation results that demonstrate the effectiveness of the proposed RL algorithm. 

Notations: Let $\mathbb{R}^{n\times m}$ be the set of all $n \times m$ real matrices. Let $\mathbb{R}^{n}$ be the $n$-dimensional Euclidean space. $\Vert \cdot \Vert_{F}$ is the Frobenius norm. $\Vert \cdot \Vert_{2}$ is the 2-norm for vectors and the induced 2-norm for matrices. Let $\Vert \cdot \Vert_{\infty}$ denote the supremum norm of a matrix-valued signals, i.e $\Vert \Delta \Vert_{\infty} = sup_{s}\Vert \Delta_{s} \Vert_{F}$. For a matrix $X \in \mathbb{R}^{n \times m}$, $vec(X)$ = $\begin{aligned}\begin{bmatrix} x_{1}^{\top}, x_{2}^{\top}, \cdots x_{m}^{\top}\end{bmatrix}^{\top}\end{aligned}$, where $x_{i}^{\top} \in \mathbb{R}^{n}$ is the $i$-th column of $X$. $\otimes$ indicates the Kronecker product. $I_{n}$ denotes the $n$-dimensional identity matrix. $\mathbb{S}^{n}$ is the set of all symmetric matrices in $\mathbb{R}^{n \times n}$. Define $\mathcal{B}_{r}(X) = \left\{Y \in \mathbb{R}^{m \times n} | \Vert Y -X \Vert_{F} < r \right\}$ and $\bar{\mathcal{B}}_{r}(X)$ as the closure of $\mathcal{B}_{r}(X)$. A continuous function $\kappa : [0,a] \rightarrow [0,\infty)$ is said to be a class $\mathcal{K}$ function if it is strictly increasing and $\kappa(0) = 0$, and a continuous function $\beta: [0,a]\times[0,\infty) \rightarrow [0,\infty)$ is said to be a class $\mathcal{KL}$ function if, for each fixed $t$, the mapping $\beta(\cdot,t)$ is a class $\mathcal{K}$ function and, for every fixed $p$, $\beta(p,t)$ decreases to 0 as $t \rightarrow \infty$.

\label{sec:intro}
\section{Problem formulation and some preliminaries}

Let $(\Omega, \mathcal{F}, \mathbb{F}, \mathbb{P})$ be a complete filtered probability space on which a one-dimensional standard Brownian motion $W(\cdot)$ is defined with $\mathbb{F}=\left\{\mathcal{F}_t\right\}_{t \geqslant 0}$ being its natural filtration augmented by all the $\mathbb{P}-$null sets in $\mathcal{F}$. Consider a class of continuous-time time-invariant stochastic linear dynamical control systems described by
\be
\label{sde}
dX(s) = \left[AX(s)+B_{1}u(s)+B_{2}v(s)\right]d s + \left[CX(s)+Du(s)\right]d W(s),
\ee
where $X(\cdot) \in \mathbb{R}^{n}$ is the system state with initial state $X\left(0\right)=x_{0}$, $u(\cdot) \in \mathbb{R}^{m_{1}}$ is the control input, and $v(\cdot) \in \mathbb{R}^{m_{2}}$ is the external disturbance input. The coefficients $A$, $C \in \mathbb{R}^{n\times n}$, $B_{1}$, $D \in \mathbb{R}^{n\times m_{1}}$ and $B_{2} \in \mathbb{R}^{n\times m_{2}}$ are unknown constant matrices. Let $\mathbb{H}$ be a Euclidean space, and we define the following space:
$L_{\mathbb{F}}^{2}(\mathbb{H}):=\left\{\varphi:[0, \infty) \times \Omega \rightarrow \mathbb{H} \mid \varphi(\cdot)\right.$ is $\mathbb{F}$-progressively measurable process, and $\mathbb{E} \int_{0}^{t}|\varphi(s)|^{2} d s<\infty$ for each $t \ge 0\left.\right\}$.

Clearly, the state equation \eqref{sde} admits a unique solution $X$ for any control pair $(u,v)\in L_{\mathbb{F}}^{2}(\mathbb{R}^{m_{1}})\times L_{\mathbb{F}}^{2}(\mathbb{R}^{m_{2}} )$. 

Define the infinite horizon performance index
\be
\label{per_func}
J\left(x_0; u(\cdot), v(\cdot)\right):=\mathbb{E}\left[\int_0^{\infty}\left(X^{\top}(s) Q X(s)+u^{\top}(s) R u(s)-\gamma^2 v^{\top}(s) v(s)\right) \mathrm{d} s\right]
\ee
with $Q = Q^{\top} \ge 0$, $R = R^{\top} > 0$. Now, we give the definition of mean-square stabilizability.

\begin{definition}
  System \eqref{sde} is called mean-square stabilizable for any initial state $x_{0}$, if there exists a matrix pair $\left(L_{u},L_{v}\right)$ such that the following closed-loop system
  \bex
  \left\{\begin{array}{ll}
    d X(s)=\left[AX(s)+B_{1}L_{u}X(s)+B_{2}L_{v}X(s)\right] d s+\left[CX(s)+DL_{u}X(s)\right] d W(s), ~ s \geq 0 \\
    X_{0}=x
  \end{array}\right.
  \eex
  satisfies $\lim _{s \rightarrow \infty} \mathbb{E}\left[X(s)^{\top}X(s)\right]=0$. In this case, the feedback control pair $\left(u(\cdot),v(\cdot)\right) = \left(L_{u}X(\cdot), L_{v}X(\cdot)\right)$ is called stabilizing, and the matrix pair $\left(L_{u}, L_{v}\right)$ is called a stabilizer of system \eqref{sde}.
\end{definition}

\begin{assumption}
  \label{ass_1}
System \eqref{sde} is mean-square stabilizable.
  \end{assumption}

Under Assumption \eqref{ass_1}, we define the sets of admissible control pairs as
\bex
\mathcal{U}_{a d}:=\left\{\left(u(\cdot),v(\cdot)\right) \in L_{\mathbb{F}}^2\left(\mathbb{R}^{m_{1}}\right) \times L_{\mathbb{F}}^2\left(\mathbb{R}^{m_{2}}\right) \mid \left(u(\cdot),v(\cdot)\right) \text { is stabilizing }\right\} \text {. }
\eex

In the two-player zero-sum differential game, the control policy player $u$ seeks to minimize \eqref{per_func}, while the disturbance policy player $v$ desires to maximize \eqref{per_func}. The goal is to find the saddle point $\left(u^{\ast}(\cdot),v^{\ast}(\cdot)\right) \in \mathcal{U}_{a d}$ such that:
\bex
\sup_{v(\cdot)} \inf_{u(\cdot)}J\left(x_0; u(\cdot), v(\cdot)\right) = \inf_{u(\cdot)} \sup_{v(\cdot)} J\left(x_0; u(\cdot), v(\cdot)\right) = J\left(x_0; u^{\ast}(\cdot), v^{\ast}(\cdot)\right),
\eex
or equivalently, $
J\left(x_0; u^{\ast}(\cdot), v(\cdot)\right) \leq J\left(x_0; u^{\ast}(\cdot), v^{\ast}(\cdot)\right) \leq J\left(x_0; u(\cdot), v^{\ast}(\cdot)\right)$
for any $\left(u(\cdot), v(\cdot)\right) \in \mathcal{U}_{a d}$.

By \citep[Theorem 5.7]{sun2016linear}, when $A$, $B_{1}$, $B_{2}$, $C$ and $D$ are accurately known, the solution to this problem can be found by solving the following GARE:
\be
\label{are_1}
\begin{aligned}
   A^{\top} P+P A+C^{\top} P C+\gamma^{-2} P B_2 B_2^{\top} P-\left[P B_1+C^{\top} P D\right] \times\left[R+D^{\top} P D\right]^{-1}\left[B_1^{\top} P+D^{\top} P C\right]+Q = 0.
  \end{aligned}
\ee

The saddle point of the zero-sum game is 
\bex
\begin{aligned}
  &u^{\ast}(\cdot)= L_{u}^{\ast}X(\cdot) = -\left(R + D^{\top}PD\right)^{-1}\left(B_{1}^{\top}P + D^{\top}PC\right)X(\cdot),\\
  &v^{\ast}(\cdot)= L_{v}^{\ast}X(\cdot) = \gamma^{-2}B_{2}^{\top}PX(\cdot).
\end{aligned}
\eex
and the game value function is $V(x_{0}) := J\left(x_0; u^{\ast}(\cdot), v^{\ast}(\cdot)\right) = x_{0}^{\top}Px_{0}$.

\begin{definition}

A $P \in \mathbb{S}^{n}$ is called a stabilizing solution of \eqref{are_1} if $P$ is a solution and the saddle point $\left(u^{\ast}(\cdot),v^{\ast}(\cdot)\right) \in \mathcal{U}_{a d}$.
\end{definition}

The model-based Algorithm \ref{algo1} aimed at solving \eqref{are_1} iteratively is presented below, which is an equivalent reformulation of the original results in \citep{dragan2011computation, sun2023reinforcement}.

\begin{breakablealgorithm}
  \label{algo1}
  \caption{Model-based Policy Iteration (PI)} 
  \renewcommand{\algorithmicrequire}{\textbf{Initialization:}}
\begin{algorithmic}[1]
  % \Require input parameters A, B, C
  % \Ensure  output result
  \Require Choose a matrix $P_{u}$ such that there exists a stabilizer $\left(L_{u}, 0\right)$ of the system \eqref{sde} satisfying
  \bex
  \left(A +B_{1}L_{u}\right)^{\top}P_{u} +P_{u}\left(A +B_{1}L_{u}\right)+\left(C+DL_{u}\right)^{\top}P_{u}\left(C+DL_{u}\right)+Q+\gamma^{-2}P_{u}B_{2}B_{2}^{\top}P_{u}+L_{u}^{\top}RL_{u} \leq 0. \eex
  \State Set $L_{v}^{(0)} = 0$, and let $k \leftarrow 0$.
  \Repeat
  \State Let $j \leftarrow 0$, and set $L_{u}^{(k+1,0)} = L_{u}$.
  \Repeat
  \State (Policy evaluation) Solve the following matrix equation for $P_u^{(k+1,j+1)}$:
  \begin{flalign}
    &\left(A  + B_{1}L_{u}^{(k+1,j)} + B_{2}L_{v}^{(k)}\right)^{\top} P_u^{(k+1,j+1)}+P_u^{(k+1,j+1)}  \left(A + B_{1}L_{u}^{(k+1,j)} + B_{2}L_{v}^{(k)} \right)  \notag\\ &+\left(C+DL_{u}^{(k+1,j)}\right)^{\top} P_u^{(k+1,j+1)} \left(C+DL_{u}^{(k+1,j)}\right) +L_{u}^{(k+1,j)^{\top}}RL_{u}^{(k+1,j)}+Q - \gamma^{2}L_{v}^{(k)^{\top}}L_{v}^{(k)}=0,
    \label{PI2}
    \end{flalign}
    
    \State (Policy improvement) Get the improved policy by
    \be
    \label{cf_u}
    L_{u}^{(k+1,j+1)} \leftarrow   -\left(R+D^{\top}P^{(k+1,j+1)}D\right)^{-1}\left(B_{1}^{\top}P^{(k+1,j+1)}+D^{\top}P^{(k+1,j+1)}C\right)
    \ee
    \State $j \leftarrow j+1$
    \Until{$\Vert P_u^{(k+1,j)} - P_u^{(k+1,j-1)}\Vert \leq \epsilon_{1}$}
    \State $P_v^{(k+1)} \leftarrow P_u^{(k+1,j)}$
    \State $L_{v}^{(k+1)} \leftarrow  \gamma^{-2}B_{2}^{\top}P_{v}^{(k+1)}$
    \State $k \leftarrow k+1$
    \Until{$\Vert P_v^{(k)} - P_v^{(k-1)}\Vert \leq \epsilon$}
    \State Output $P_v^{(k)}$ as the stabilizing solution $P^{(\ast)}$ of GARE \eqref{are_1}.
\end{algorithmic}
\end{breakablealgorithm}

Then, the following two theorems guarantee the convergence of Algorithm \ref{algo1}, and the proofs can be found in \citep{dragan2011computation, sun2023reinforcement}.

\begin{theorem}
\label{th2.3}
In the inner loop of Algorithm \ref{algo1}, the following properties hold:

\begin{enumerate}[(i)]
  \item The matrix pair $\left(L_{u}^{(k+1,j)}, L_{v}^{(k)}\right)$ is called a stabilizer of system \eqref{sde} for all $j = 0,1,2,\cdots$;
  \item $P_u^{(k+1,0)} \geq P_u^{(k+1,1)} \geq P_u^{(k+1,2)} \geq \dots \geq P_u^{(k+1,\ast)}$;
  \item $\lim _{j \rightarrow \infty} P_u^{(k+1,j)} = P_u^{(k+1,\ast)}$, $\lim _{j \rightarrow \infty} L_{u}^{(k+1,j)} = L_{u}^{(k+1,\ast)}$.
\end{enumerate}
\end{theorem}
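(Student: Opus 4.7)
The plan is to interpret the inner loop, with $L_v^{(k)}$ held fixed, as the classical Kleinman (Newton) iteration for the stochastic algebraic Riccati equation obtained from \eqref{are_1} by freezing the disturbance gain. Setting $\tilde A := A + B_2 L_v^{(k)}$ and $\tilde Q := Q - \gamma^{2} L_v^{(k)\top} L_v^{(k)}$, equation \eqref{PI2} is precisely the generalized Lyapunov equation for the closed-loop LQ problem with drift $\tilde A + B_1 L_u^{(k+1,j)}$, diffusion $C + D L_u^{(k+1,j)}$, and source $L_u^{(k+1,j)\top} R L_u^{(k+1,j)} + \tilde Q$, while \eqref{cf_u} is the associated policy-improvement step. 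All three claims then reduce to the standard stochastic Kleinman analysis, carried out by induction on $j$.

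For (i), in the base case $j=0$ we must show $(L_u, L_v^{(k)}) \in \mathcal{U}_{ad}$. Using the Lyapunov-type inequality satisfied by the initial $P_u$ and completing the square in $L_v^{(k)}$, one obtains
\begin{align*}
(\tilde A + B_1 L_u)^\top P_u + P_u (\tilde A + B_1 L_u) &+ (C+DL_u)^\top P_u (C+DL_u)\\
&\leq -Q - L_u^\top R L_u + \gamma^{2} L_v^{(k)\top} L_v^{(k)} - \gamma^{-2}\bigl(P_u B_2 - \gamma^{2} L_v^{(k)\top}\bigr)\bigl(B_2^\top P_u - \gamma^{2} L_v^{(k)}\bigr),
\end{align*}
and mean-square stability of the closed loop then follows from the outer-loop identity $L_v^{(k)} = \gamma^{-2} B_2^\top P_v^{(k)}$ together with the bound $P_v^{(k)} \leq P_u$ supplied by the outer-loop analysis. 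For the inductive step I would invoke the Kleinman completing-the-square identity
$$\mathcal{L}_L(P) + L^\top R L + \tilde Q = \bigl(L - L^\ast(P)\bigr)^\top \bigl(R + D^\top P D\bigr)\bigl(L - L^\ast(P)\bigr) + \mathcal{R}(P),$$
with $L^\ast(P) := -(R+D^\top PD)^{-1}(B_1^\top P + D^\top PC)$ and $\mathcal{R}$ the inner Riccati operator; evaluating at $L = L_u^{(k+1,j)}$ and $P = P_u^{(k+1,j+1)}$ yields $\mathcal{R}(P_u^{(k+1,j+1)}) = -\bigl(L_u^{(k+1,j)} - L_u^{(k+1,j+1)}\bigr)^\top (R+D^\top P_u^{(k+1,j+1)} D)\bigl(L_u^{(k+1,j)} - L_u^{(k+1,j+1)}\bigr) \leq 0$, which rearranges into a Lyapunov equation for the closed loop at $L_u^{(k+1,j+1)}$ with a negative semidefinite source, giving mean-square stability of $(L_u^{(k+1,j+1)}, L_v^{(k)})$.

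For (ii), subtracting the Lyapunov equation at step $j+1$ (for $P_u^{(k+1,j+2)}$) from the expression obtained by plugging $P_u^{(k+1,j+1)}$ into the same operator produces
\begin{align*}
\mathcal{L}_{j+1}\bigl(P_u^{(k+1,j+1)} - P_u^{(k+1,j+2)}\bigr) = -\bigl(L_u^{(k+1,j)} - L_u^{(k+1,j+1)}\bigr)^\top \bigl(R + D^\top P_u^{(k+1,j+1)} D\bigr)\bigl(L_u^{(k+1,j)} - L_u^{(k+1,j+1)}\bigr),
\end{align*}
where $\mathcal{L}_{j+1}$ is the generalized Lyapunov operator of the closed loop at step $j+1$. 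Since this closed loop is mean-square stable by (i), $-\mathcal{L}_{j+1}^{-1}$ maps positive semidefinite matrices to positive semidefinite matrices, giving $P_u^{(k+1,j+1)} \geq P_u^{(k+1,j+2)}$; the chain is extended down to $j=0$ by the same argument applied to the initial Lyapunov-type inequality, and the lower bound $P_u^{(k+1,j)} \geq P_u^{(k+1,\ast)}$ follows analogously by comparison with the stabilizing solution of the inner Riccati equation for the data $(\tilde A, B_1, C, D, \tilde Q, R)$, whose existence is ensured by \citep[Thm.~5.7]{sun2016linear}. Finally, (iii) follows from monotone convergence: $\{P_u^{(k+1,j)}\}$ is decreasing and bounded below, so it converges to some $P_u^{(k+1,\infty)}$; passing to the limit in \eqref{cf_u} and \eqref{PI2} shows $P_u^{(k+1,\infty)}$ is a stabilizing solution of the inner Riccati equation and hence coincides with $P_u^{(k+1,\ast)}$ by uniqueness, while $L_u^{(k+1,j)} \to L_u^{(k+1,\ast)}$ follows from continuity of \eqref{cf_u}. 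The main obstacle will be establishing the base case of (i) for $k \geq 1$, because the bound $P_v^{(k)} \leq P_u$ used there is itself an outer-loop property: the cleanest presentation is therefore a simultaneous induction on $(k,j)$ coupling Theorem \ref{th2.3} with its outer-loop counterpart, rather than a standalone argument for the inner loop.
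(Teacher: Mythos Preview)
The paper does not supply its own proof of this theorem: immediately after stating it, the authors write that ``the proofs can be found in \citep{dragan2011computation, sun2023reinforcement}'' and move on. Your Kleinman/Newton-iteration reduction---freezing $L_v^{(k)}$, passing to the stochastic LQ Riccati problem with data $(\tilde A,B_1,C,D,\tilde Q,R)$, and using the completing-the-square identity to drive stability, monotonicity, and convergence by induction on $j$---is exactly the technique those references employ, so your proposal is aligned with the intended argument.

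Your closing remark about the base case for $k\geq 1$ is well taken and is in fact the only genuinely delicate point: the inequality you derive for $\mathcal L_{L_u,L_v^{(k)}}(P_u)$ has right-hand side $-\tilde Q - L_u^\top R L_u - \gamma^{-2}(P_u-P_v^{(k)})B_2B_2^\top(P_u-P_v^{(k)})$, and concluding mean-square stability from it requires either $\tilde Q\geq 0$ or, more directly, the outer-loop bound $P_v^{(k)}\leq P_u$ together with the fact that $P_v^{(k)}$ itself furnishes a strict Lyapunov certificate for the pair $(L_u,L_v^{(k)})$. The cited references resolve this precisely via the simultaneous induction on $(k,j)$ you propose, coupling Theorem~\ref{th2.3} with its outer-loop counterpart; treating the inner loop in isolation would leave that step underjustified. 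With that caveat, your plan is correct and matches the literature the paper cites.
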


\begin{theorem}
In the outer loop of Algorithm \ref{algo1}, the following properties hold:
\begin{enumerate}[(i)]
  \item The matrix pair $\left(L_{u}^{(k+1,j)}, L_{v}^{(k)}\right)$ is called a stabilizer of system \eqref{sde} for all $k = 0,1,2,\cdots$;
  \item $P_v^{(1)} \leq P_v^{(2)} \leq P_v^{(3)} \leq \dots \leq P_v^{(\ast)}$;
  \item $\lim _{k \rightarrow \infty} P_v^{(k+1)} = P_v^{(\ast)}$, $\lim _{k \rightarrow \infty} L_{v}^{(k+1)} = L_{v}^{(\ast)}$.
\end{enumerate} 
\end{theorem}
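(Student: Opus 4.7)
My plan is to establish (i), (ii), and the auxiliary upper bound $P_v^{(k+1)}\le P_v^{(\ast)}$ by a simultaneous induction on $k$, in the spirit of the proof of Theorem \ref{th2.3}. The base case $k=0$ holds because $L_v^{(0)}=0$, $(L_u,0)$ is a stabilizer by the initialization assumption, and Theorem \ref{th2.3} then yields a stabilizing limit $(L_u^{(1,\ast)},0)$ with $P_v^{(1)}=P_u^{(1,\ast)}\ge 0$.

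The algebraic core of the induction consists of two completion-of-squares identities. First, using $L_v^{(k+1)}=\gamma^{-2}B_2^{\top}P_v^{(k+1)}$ and $W_v:=L_v^{(k+1)}-L_v^{(k)}$, equation \eqref{PI2} at the inner-loop limit rewrites as
\begin{align*}
& (A+B_1 L_u^{(k+1,\ast)}+B_2 L_v^{(k+1)})^{\top}P_v^{(k+1)}+P_v^{(k+1)}(A+B_1 L_u^{(k+1,\ast)}+B_2 L_v^{(k+1)})\\
& \quad +(C+DL_u^{(k+1,\ast)})^{\top}P_v^{(k+1)}(C+DL_u^{(k+1,\ast)})+L_u^{(k+1,\ast)\top}RL_u^{(k+1,\ast)}+Q-\gamma^2 L_v^{(k+1)\top}L_v^{(k+1)}=\gamma^2 W_v^{\top}W_v.
\end{align*}
Second, the $P_u$-initialization inequality, via completion of squares in the $B_2$-direction, gives for every $L_v$,
\[
(A+B_1 L_u+B_2 L_v)^{\top}P_u+P_u(A+B_1 L_u+B_2 L_v)+(C+DL_u)^{\top}P_u(C+DL_u)+Q+L_u^{\top}RL_u-\gamma^2 L_v^{\top}L_v\le 0.
\]

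The inductive step uses these two identities as follows. For claim (i), applying the second identity with $L_v=L_v^{(k+1)}$ and combining with the inductively available bound $P_v^{(k+1)}\le P_v^{(\ast)}$ (which provides a uniform bound on $L_v^{(k+1)}$) shows that $P_u$ is a Lyapunov certificate for $(L_u,L_v^{(k+1)})$, so Theorem \ref{th2.3} applies to the $(k{+}2)$-nd inner loop and yields stabilizing pairs $(L_u^{(k+2,j)},L_v^{(k+1)})$ for all $j$, with a stabilizing limit $(L_u^{(k+2,\ast)},L_v^{(k+1)})$. For (ii), I subtract the first identity above from the corresponding inner-loop equation for $P_v^{(k+2)}$ at the feedback $L_u^{(k+2,\ast)}$ that is optimal for $P_v^{(k+2)}$, and apply the Kleinman completion-of-squares identity in the mismatch $L_u^{(k+2,\ast)}-L_u^{(k+1,\ast)}$. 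The difference $\Delta_{k+1}:=P_v^{(k+2)}-P_v^{(k+1)}$ then satisfies a stochastic Lyapunov equation whose drift corresponds to the mean-square-stable closed loop $(L_u^{(k+2,\ast)},L_v^{(k+1)})$ and whose forcing term is
\[
\gamma^2 W_v^{\top}W_v+(L_u^{(k+2,\ast)}-L_u^{(k+1,\ast)})^{\top}(R+D^{\top}P_v^{(k+1)}D)(L_u^{(k+2,\ast)}-L_u^{(k+1,\ast)})\ge 0;
\]
uniqueness of the Lyapunov solution gives $\Delta_{k+1}\ge 0$. The upper bound $P_v^{(k+1)}\le P_v^{(\ast)}$ is derived by the analogous comparison between the rewritten equation for $P_v^{(k+1)}$ and GARE \eqref{are_1} written at the saddle-point pair $(L_u^{\ast},L_v^{\ast})$.

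Once the induction closes, $\{P_v^{(k)}\}$ is monotone nondecreasing and bounded above by $P_v^{(\ast)}$ in $\mathbb{S}^n$, hence converges to some $\bar P$; continuity of the Riccati and improvement maps lets me pass to the limit in \eqref{PI2} and \eqref{cf_u}, so $\bar P$ solves \eqref{are_1} with a stabilizing closed loop, and uniqueness of the stabilizing solution then forces $\bar P=P_v^{(\ast)}$. Claim (iii) follows from $L_v^{(k+1)}=\gamma^{-2}B_2^{\top}P_v^{(k+1)}$ by continuity. I anticipate the most delicate step to be the stabilizability propagation in (i): one must verify that the $B_2$-completion of squares in the $P_u$-inequality actually absorbs an \emph{arbitrary} disturbance feedback $L_v^{(k+1)}$ once it is paired with the uniform upper bound on $P_v^{(k+1)}$, and that every Lyapunov operator invoked at sign-determination time indeed corresponds to a mean-square stable closed loop --- otherwise the signs of $\Delta_{k+1}$ and $P_v^{(\ast)}-P_v^{(k+1)}$ cannot be read off.
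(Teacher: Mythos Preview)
The paper does not prove this theorem; it simply states that ``the proofs can be found in \citep{dragan2011computation, sun2023reinforcement}''. So your proposal is supplying an argument where the paper offers only a citation. The architecture you describe --- simultaneous induction on $k$, the two completion-of-squares identities, the Lyapunov comparison for the monotonicity $P_v^{(k+1)}\le P_v^{(k+2)}\le P_v^{(\ast)}$, and the monotone-bounded-convergence passage for (iii) --- is the standard route taken in those references, and your identities are correct as stated.

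There is, however, a genuine gap in your handling of step (i). You claim that the second identity, together with the bound $P_v^{(k+1)}\le P_v^{(\ast)}$, makes $P_u$ a Lyapunov certificate for the closed loop $(L_u,L_v^{(k+1)})$. But the second identity only yields
\[
\mathcal{L}_{(L_u,\,L_v^{(k+1)})}(P_u)\ \le\ -\bigl(Q+L_u^{\top}RL_u-\gamma^2 L_v^{(k+1)\top}L_v^{(k+1)}\bigr),
\]
and a uniform bound on $\|L_v^{(k+1)}\|$ does nothing to force the right-hand side to be $\le 0$: the matrix $Q+L_u^{\top}RL_u-\gamma^2 L_v^{(k+1)\top}L_v^{(k+1)}$ can be indefinite no matter how small $L_v^{(k+1)}$ is. So $P_u$ is not, on this evidence, a Lyapunov function proving mean-square stability of $(L_u,L_v^{(k+1)})$. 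What the identity \emph{does} give is that $P_u$ is a supersolution of the inner-loop policy-evaluation equation at $(L_u,L_v^{(k+1)})$ --- the right input for the Kleinman comparison --- but that comparison only runs once you already know the closed loop is mean-square stable, which is precisely what is at issue. You flag this circularity yourself at the end of the proposal; as written, the induction does not close. The cited references handle this either by an independent stability argument for the $L_v^{(k)}\to L_v^{(k+1)}$ update (working from the known stabilizer $(L_u^{(k+1,\ast)},L_v^{(k)})$ rather than $(L_u,L_v^{(k+1)})$) or by a standing positivity hypothesis on $Q^{(k)}=Q-\gamma^2 L_v^{(k)\top}L_v^{(k)}$; note that the paper itself imposes exactly $Q^{(k)}>0$ in Theorem~\ref{thm_main} to make analogous sign arguments go through.
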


Note that the above method needs all knowledge of the system matrices, which are difficult to obtain in the real world. In the next section, we first develop an online model-free RL algorithm for the stochastic LQZSG problem without using the information of the coefficient matrices $A$, $B_{1}$, $B_{2}$, $C$, $D$ in system \eqref{sde}. We then provide the convergence analysis and finally present the online implementation using the least squares method.
\section{PI-based ADP algorithm for the unknown dynamics case}

In this section, we will propose a PI-based ADP algorithm to solve the stochastic LQZSG problem under completely unknown system dynamics. Then, we discuss the convergence of the algorithm and demonstrate its implementation through the least squares method.

For $P \in \mathbb{S}^{n}$, we define an operator as follows:
\bex
  svec(P) :=\begin{bmatrix}p_{11}, 2 p_{12}, \ldots, 2 p_{1 n}, p_{22}, 2 p_{23}, \ldots, 2 p_{n-1, n}, p_{n n}\end{bmatrix}^{\top} \in \mathbb{R}^{\frac{1}{2}n(n+1)}.
\eex

By \citep{murray2002adaptive, li2022stochastic}, there exists a matrix $\mathcal{T} \in \mathbb{R}^{n^{2} \times \frac{1}{2}n(n+1)}$ with rank($\mathcal{T}) = \frac{1}{2}n(n+1)$ such that $vec(P) = \mathcal{T}\times svec(P)$ for any $P \in \mathbb{S}^{n}$. Then, we define $\bar{L} := \left(L^{\top} \otimes L^{\top}\right)\mathcal{T} \in \mathbb{R}^{m^{2} \times \frac{1}{2}n(n+1)}$, which satisfies $L^{\top} \otimes L^{\top} vec(P) = \bar{L}\times svec(P)$ for $L \in \mathbb{R}^{n \times m}$. For example, for vector $x \in \mathbb{R}^{n}$, we have 
\bex
\bar{x} :=\begin{bmatrix}x_{1}^{2}, x_{1} x_{2}, \ldots, x_{1} x_{n}, x_{2}^{2}, x_{2} x_{3}, \ldots, x_{n-1} x_{n}, x_{n}^{2}\end{bmatrix}^{\top} \in \mathbb{R}^{\frac{1}{2}n(n+1)}.
\eex

For brevity, we define the following notation:
\bex
\begin{aligned}
  \phi_{i}^{(k+1,j)} &:= \left[\delta_{xx}^{i},\quad 2I_{xx}^{i}\left(I_{n} \otimes L_{u}^{(k+1,j)^{\top}}\right) - 2I_{xu}^{i},\quad  2I_{xx}^{i}\left(I_{n} \otimes L_{v}^{(k)^{\top}}\right) - 2I_{xv}^{i},\quad I_{xx}^{i}\bar{L}_{u}^{(k+1,j)}-\delta_{uu}^{i}\right],\\  
  \theta_{i}^{(k+1,j)} &:= -I_{xx}^{i}vec\left({L_{u}^{(k+1,j)}}^{\top}RL_{u}^{(k+1,j)} + Q -\gamma^{-2}{L_v^{(k)}}^{\top}L_v^{(k)}\right),
\end{aligned}
\eex
where
\bex
\begin{aligned}
\delta_{xx}^{i} &:= \mathbb{E}^{\mathcal{F}_{t_{i}}}\left[\bar{X}^{\top}(t_{i+1})-\bar{X}^{\top}(t_{i})\right], \quad \delta_{uu}^{i} := \mathbb{E}^{\mathcal{F}_{t_{i}}}\left[\int_{t_{i}}^{t_{i+1}}\bar{u}^{\top}(s)ds\right], \quad  I_{xx}^{i} := \mathbb{E}^{\mathcal{F}_{t_{i}}}\left[\int_{t_{i}}^{t_{i+1}}X^{\top}(s)\otimes X^{\top}(s)ds\right] ,\\   I_{xu}^{i} &:= \mathbb{E}^{\mathcal{F}_{t_{i}}}\left[\int_{t_{i}}^{t_{i+1}}X^{\top}(s)\otimes u^{\top}(s)ds\right],\quad I_{xv}^{i} := \mathbb{E}^{\mathcal{F}_{t_{i}}}\left[\int_{t_{i}}^{t_{i+1}}X^{\top}(s)\otimes v^{\top}(s)ds\right].
\end{aligned}
\eex

\begin{proposition}
\label{pro_ls}
Find $P_u^{(k+1,j+1)}$ from the matrix equation \eqref{PI2} and obtain $L_{u}^{(k+1,j+1)}$ using \eqref{cf_u} in Algorithm \ref{algo1} are equivalent to solving the following equation:
\be
\label{ls_1}
\phi_{i}^{(k+1,j)} \begin{bmatrix} svec\left(P_u^{(k+1,j+1)}\right) \\  vec\left(\tilde{B}_{1}^{(k+1,j+1)} \right)\\ vec\left(\tilde{B}_{2}^{(k+1,j+1)} \right)\\svec\left(\tilde{D}^{(k+1,j+1)} \right)\end{bmatrix} = \theta_{i}^{(k+1,j)},
\ee
where
\bex
\begin{aligned}
&\tilde{B}_{1}^{(k+1,j+1)} := B_{1}^{\top}P_u^{(k+1,j+1)}+D^{\top}P_u^{(k+1,j+1)}C, \quad \tilde{B}_{2}^{(k+1,j+1)} := B_{2}^{\top}P_u^{(k+1,j+1)}, \quad \tilde{D}^{(k+1,j+1)} := D^{\top}P_u^{(k+1,j+1)}D.
\end{aligned}
\eex
\end{proposition}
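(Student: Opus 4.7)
The plan is to apply Itô's formula to the scalar process $X^{\top}(s)\,P_u^{(k+1,j+1)}\,X(s)$ along trajectories of \eqref{sde} and then substitute \eqref{PI2} to eliminate the individual system matrices $A,B_1,B_2,C,D$ from the resulting drift. Writing $P:=P_u^{(k+1,j+1)}$, $L:=L_u^{(k+1,j)}$, and $L_v:=L_v^{(k)}$ for brevity, Itô's formula produces a drift
\begin{equation*}
X^{\top}(A^{\top}P+PA+C^{\top}PC)X + 2X^{\top}(PB_1+C^{\top}PD)u + 2X^{\top}PB_2 v + u^{\top}D^{\top}PDu,
\end{equation*}
plus a martingale part. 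Solving \eqref{PI2} for $A^{\top}P+PA+C^{\top}PC$ and regrouping around the off-policy residuals $(u-LX)$ and $(v-L_vX)$ collapses the drift to
\begin{equation*}
2X^{\top}\tilde{B}_1^{\top}(u-LX) + 2X^{\top}\tilde{B}_2^{\top}(v-L_vX) + u^{\top}\tilde{D}u - X^{\top}L^{\top}\tilde{D}LX - X^{\top}\bigl(L^{\top}RL+Q-\gamma^{-2}L_v^{\top}L_v\bigr)X,
\end{equation*}
in which only the reduced unknowns $P,\tilde{B}_1,\tilde{B}_2,\tilde{D}$ appear and the raw system matrices are gone.

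Next I would integrate this identity over $[t_i,t_{i+1}]$ and take $\mathbb{E}^{\mathcal{F}_{t_i}}[\,\cdot\,]$; the stochastic-integral part vanishes under $L^{2}_{\mathbb{F}}$ admissibility, so the left-hand side becomes $\delta_{xx}^{i}\,svec(P)$. Each remaining integrand is converted to a linear form in the unknowns via the Kronecker identities $u^{\top}MX=(X^{\top}\otimes u^{\top})\,vec(M)$ and $X^{\top}NX=(X^{\top}\otimes X^{\top})\,vec(N)$, together with $vec(L^{\top}\tilde{D}L)=\bar{L}_u\,svec(\tilde{D})$ (which is precisely the definition of $\bar{L}_u$) and the symmetry identity $u^{\top}\tilde{D}u=\bar{u}^{\top}\,svec(\tilde{D})$. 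Matching the four resulting blocks one-for-one with the four blocks of $\phi_i^{(k+1,j)}$ — namely $\delta_{xx}^{i}$ for $svec(P)$, $2I_{xx}^{i}(I_n\otimes L^{\top})-2I_{xu}^{i}$ for $vec(\tilde{B}_1)$, $2I_{xx}^{i}(I_n\otimes L_v^{\top})-2I_{xv}^{i}$ for $vec(\tilde{B}_2)$, and $I_{xx}^{i}\bar{L}_u-\delta_{uu}^{i}$ for $svec(\tilde{D})$ — together with $\theta_i^{(k+1,j)}=-I_{xx}^{i}\,vec(L^{\top}RL+Q-\gamma^{-2}L_v^{\top}L_v)$ on the right, yields exactly \eqref{ls_1}. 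The original update \eqref{cf_u} can then be rewritten as $L_u^{(k+1,j+1)}=-(R+\tilde{D}^{(k+1,j+1)})^{-1}\tilde{B}_1^{(k+1,j+1)}$, which uses only quantities returned by \eqref{ls_1}, so the two schemes are equivalent in the sense claimed.

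The main obstacle I expect is purely algebraic bookkeeping: choosing the correct Kronecker identity for each term, using $svec$ only where symmetry justifies it (for $P$ and $\tilde{D}$) and $vec$ otherwise (for $\tilde{B}_1,\tilde{B}_2$), and tracking signs carefully when moving integrated terms across to form $\phi_i^{(k+1,j)}\cdot(\cdot)=\theta_i^{(k+1,j)}$. A more conceptual point worth flagging is that in this off-policy setting the \emph{actually sampled} inputs $u(s),v(s)$ need not coincide with $LX,L_vX$; it is exactly the rewrite around $(u-LX),(v-L_vX)$ that turns \eqref{PI2} into an identity evaluable along arbitrary excitation data, and that legitimizes the appearance of $I_{xu}^{i}$ and $I_{xv}^{i}$ (rather than integrals along the target policies). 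Strict equivalence — rather than a one-way implication — finally requires a persistence-of-excitation / full-column-rank condition on $\phi_i^{(k+1,j)}$, to be invoked when the least-squares implementation is discussed.
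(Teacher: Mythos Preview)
Your proposal is correct and follows essentially the same route as the paper: apply It\^o's formula to $X^{\top}P_u^{(k+1,j+1)}X$, use \eqref{PI2} to replace the closed-loop Lyapunov block so that the drift depends only on $P,\tilde{B}_1,\tilde{B}_2,\tilde{D}$ and the off-policy residuals $(u-L_u^{(k+1,j)}X)$, $(v-L_v^{(k)}X)$, integrate over $[t_i,t_{i+1}]$, take $\mathbb{E}^{\mathcal{F}_{t_i}}$, and vectorize via Kronecker/svec identities to obtain \eqref{ls_1}. The only cosmetic difference is that the paper first rewrites \eqref{sde} in residual form and then applies It\^o, whereas you apply It\^o first and then regroup; the computation is identical, and your remark that strict equivalence requires the full-column-rank condition is handled by the paper in the subsequent lemma.
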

\begin{proof}
Firstly, we rewrite the original system \eqref{sde} as
\be
\begin{aligned}
dX(s) = &\left[\left(A + B_{1}L_{u}^{(k+1,j)} + B_{2}L_v^{(k)}\right)X(s)+B_{1}\left(u(s)-L_{u}^{(k+1,j)}X(s)\right)+B_{2}\left(v(s)-L_v^{(k)}X(s)\right)\right]d s \\ &+ \left[\left(C+DL_{u}^{(k+1,j)}\right)X(s)+D\left(u(s)-L_{u}^{(k+1,j)}X(s)\right)\right]d W(s).
\end{aligned}
\label{sde_2}
\ee

By \eqref{PI2}, \eqref{cf_u} and \eqref{sde_2}, applying It\^{o}'s formula to $X^{\top}(s)P_u^{(k+1,j+1)}X(s)$ yields

\be
\begin{aligned}
&d\left(X^{\top}(s)P_u^{(k+1,j+1)}X(s)\right) \\=& \left\{\left[\left(A + B_{1}L_{u}^{(k+1,j)} + B_{2}L_v^{(k)}\right)X(s)+B_{1}\left(u(s)-L_{u}^{(k+1,j)}X(s)\right)+B_{2}\left(v(s)-L_v^{(k)}X(s)\right)\right]^{\top}P_u^{(k+1,j+1)}X(s)\right.  \\ &+ X^{\top}(s)P_u^{(k+1,j+1)}\left[\left(A + B_{1}L_{u}^{(k+1,j)} + B_{2}L_v^{(k)}\right)X(s)+B_{1}\left(u(s)-L_{u}^{(k+1,j)}X(s)\right)+B_{2}\left(v(s)-L_v^{(k)}X(s)\right)\right] \\ &+\left.\left[\left(C+DL_{u}^{(k+1,j)}\right)X(s)+D\left(u(s)-L_{u}^{(k+1,j)}X(s)\right)\right]^{\top}P_u^{(k+1,j+1)}\left[\left(C+DL_{u}^{(k+1,j)}\right)X(s)+D\left(u(s)-L_{u}^{(k+1,j)}X(s)\right)\right]\right\}ds \\ &+ (\cdots) dW(s)
\\  =& \bigg\{2\left(u(s)-L_{u}^{(k+1,j)}X(s)\right)^{\top}\left(B_{1}^{\top}P_u^{(k+1,j+1)}+D^{\top}P_u^{(k+1,j+1)}C\right)X(s)+2\left(v(s)-L_v^{(k)}X(s)\right)^{\top}B_{2}^{\top}P_u^{(k+1,j+1)}X(s) \\ & - X^{\top}(s){L_{u}^{(k+1,j)}}^{\top}D^{\top}P_u^{(k+1,j+1)}DL_{u}^{(k+1,j)}X(s) + u^{\top}(s)D^{\top}P_u^{(k+1,j+1)}Du(s) \\ & - X^{\top}(s)\left({L_{u}^{(k+1,j)}}^{\top}RL_{u}^{(k+1,j)} + Q -\gamma^{-2}{L_v^{(k)}}^{\top}L_v^{(k)}\right)X(s)\bigg\}ds + (\cdots) dW(s).
\label{ito_1}
\end{aligned}
\ee

Integrating both sides of \eqref{ito_1} from $t_{i}$ to $t_{i+1}$ and taking conditional expectation $\mathbb{E}^{\mathcal{F}_{t_{i}}}$, we can get
\be
\begin{aligned}
&\mathbb{E}^{\mathcal{F}_{t_{i}}}\left[X^{\top}(t+\Delta t)P_u^{(k+1,j+1)}X(t+\Delta t)- X^{\top}(t)P_u^{(k+1,j+1)}X(t)\right]\\=&\mathbb{E}^{\mathcal{F}_{t_{i}}}\bigg[\int_{t_{i}}^{t_{i+1}}\left\{2\left(u(s)-L_{u}^{(k+1,j)}X(s)\right)^{\top}\left(B_{1}^{\top}P_u^{(k+1,j+1)}+D^{\top}P_u^{(k+1,j+1)}C\right)X(s)+2\left(v(s)-L_v^{(k)}X(s)\right)^{\top}B_{2}^{\top}P_u^{(k+1,j+1)}X(s) \right.\\ & - X^{\top}(s){L_{u}^{(k+1,j)}}^{\top}D^{\top}P_u^{(k+1,j+1)}DL_{u}^{(k+1,j)}X(s) + u^{\top}(s)D^{\top}P_u^{(k+1,j+1)}Du(s) \\ & \left.- X^{\top}(s)\left({L_{u}^{(k+1,j)}}^{\top}RL_{u}^{(k+1,j)} + Q -\gamma^{-2}{L_v^{(k)}}^{\top}L_v^{(k)}\right)X(s)\right\}ds\bigg].
\end{aligned}
\label{ito_2}
\ee

By the vectorization and Kronecker product $\otimes$, we can rewrite \eqref{ito_2} as
\be
\begin{aligned}
&-\mathbb{E}^{\mathcal{F}_{t_{i}}}\left[\int_{t_{i}}^{t_{i+1}}X^{\top}(s)\left({L_{u}^{(k+1,j)}}^{\top}RL_{u}^{(k+1,j)} + Q -\gamma^{-2}{L_v^{(k)}}^{\top}L_v^{(k)}\right)X(s)ds\right]\\=&\mathbb{E}^{\mathcal{F}_{t_{i}}}\bigg[\bar{X}^{\top}(t_{i+1})svec\left(P_u^{(k+1,j+1)}\right)-\bar{X}^{\top}(t_{i})svec\left(P_u^{(k+1,j+1)}\right)\\ &-\int_{t_{i}}^{t_{i+1}}\left(2X^{\top}(s)\otimes \left(u(s)-L_{u}^{(k+1,j)}X(s)\right)^{\top}\right)vec\left(B_{1}^{\top}P_u^{(k+1,j+1)}+D^{\top}P_u^{(k+1,j+1)}C\right)ds\\ &-\int_{t_{i}}^{t_{i+1}}\left(2X^{\top}(s)\otimes\left(v(s)-L_v^{(k)}X(s)\right)^{\top}\right)vec\left(B_{2}^{\top}P_u^{(k+1,j+1)}\right)ds \\ &+ \int_{t_{i}}^{t_{i+1}}\left(X^{\top}(s)\otimes X^{\top}(s)\right) \bar{L}_{u}^{(k+1,j)^{\top}}svec\left(D^{\top}P_u^{(k+1,j+1)}D\right)ds \\& - \int_{t_{i}}^{t_{i+1}}\bar{u}^{\top}(s)svec\left(D^{\top}P_u^{(k+1,j+1)}D\right)ds\bigg] \\=& \phi_{i}^{(k+1,j)} \begin{bmatrix} svec\left(P_u^{(k+1,j+1)}\right) \\  vec\left(\tilde{B}_{1}^{(k+1,j+1)}\right)\\ vec\left(\tilde{B}_{2}^{(k+1,j+1)}\right)\\svec\left(\tilde{D}^{(k+1,j+1)}\right)\end{bmatrix} = \theta_{i}^{(k+1,j)}.
\end{aligned}
\ee
\end{proof}

It can be seen that the desired parameters satisfy \eqref{ls_1}. However, \eqref{ls_1} is only a one-dimensional equation, and we cannot guarantee the uniqueness of the solution. We will use the least squares method to solve this problem.

For any positive integer $N$, denote
 \bex
 \begin{aligned}
 & \Phi^{(k+1,j)} :=\left[\phi_1^{(k+1,j)^{\top}}, \ldots, \phi_N^{(k+1,j)^{\top}}\right]^{\top} = \left[\delta_{xx},\quad 2I_{xx}\left(I_{n} \otimes L_{u}^{(k+1,j)^{\top}}\right) - 2I_{xu},\quad  2I_{xx}\left(I_{n} \otimes L_{v}^{(k)^{\top}}\right) - 2I_{xv},\quad I_{xx}\bar{L}_{u}^{(k+1,j)}-\delta_{uu}\right], \\ &
 \Theta^{(k+1,j)}  :=\left[\theta_1^{(k+1,j)}, \ldots, \theta_N^{(k+1,j)}\right]^{\top} = -I_{xx}vec\left({L_{u}^{(k+1,j)}}^{\top}RL_{u}^{(k+1,j)} + Q -\gamma^{-2}{L_v^{(k)}}^{\top}L_v^{(k)}\right), \\ &
 \delta_{xx} := \left[\delta_{xx}^{0^{\top}}, \ldots, \delta_{xx}^{N-1^{\top}}\right]^{\top}, \quad \delta_{uu} := \left[\delta_{uu}^{0^{\top}}, \ldots, \delta_{uu}^{N-1^{\top}}\right]^{\top}, \quad I_{xx} := \left[I_{xx}^{0^{\top}}, \ldots, I_{xx}^{N-1^{\top}}\right]^{\top},\\ & I_{xu} := \left[I_{xu}^{0^{\top}}, \ldots, I_{xu}^{N-1^{\top}}\right]^{\top}, \quad I_{xv} := \left[I_{xv}^{0^{\top}}, \ldots, I_{xv}^{N-1^{\top}}\right]^{\top}.
 \end{aligned}
 \eex

Then, we have the following $N$-dimensional equation:
\be
\label{ls_3}
\begin{aligned}
\Phi^{(k+1,j)} \begin{bmatrix} svec\left(P_u^{(k+1,j+1)}\right) \\  vec\left(\tilde{B}_{1}^{(k+1,j+1)}\right)\\ vec\left(\tilde{B}_{2}^{(k+1,j+1)}\right)\\svec\left(\tilde{D}^{(k+1,j+1)}\right)\end{bmatrix} = \Theta^{(k+1,j)}.
\end{aligned}
\ee

When the matrix $\Phi^{(k+1,j)^{\top}}$ in \eqref{ls_3} is full column rank, the solution can be obtained using the least squares method as follows:
\be
\label{ls_2}
\begin{aligned}
\begin{bmatrix} svec\left(P_u^{(k+1,j+1)}\right) \\  vec\left(\tilde{B}_{1}^{(k+1,j+1)}\right)\\ vec\left(\tilde{B}_{2}^{(k+1,j+1)}\right)\\svec\left(\tilde{D}^{(k+1,j+1)}\right)\end{bmatrix} = \left(\Phi^{(k+1,j)^{\top}}\Phi^{(k+1,j)} \right)^{-1}\Phi^{(k+1,j)^{\top}}\Theta^{(k+1,j)}.
\end{aligned}
\ee

In practice, at each time point $t_{g}$ $\left(t_{i} \geq t_{g} \geq t_{i+1}, g = 1,2,\cdots, \mathcal{G}\right)$ , we take a control pair $\left(u(t_{g}), v(t_{g})\right)$ as an input to the system \eqref{sde} and obtain $X(t_{g+1})$ by the Euler-Maruyama (EM) method. When we have obtained $\mathcal{H}$ sample paths in a time interval $\left[t_{i},t_{i+1}\right]$, we calculate $\mathbb{E}^{\mathcal{F}_{t_{i}}}\left[ \bar{X}(t_{i+1})^{\top} \right]$ as $\mathbb{E}^{\mathcal{F}_{t_{i}}}\left[ \bar{X}(t_{i+1})^{\top} \right] \approx \frac{1}{\mathcal{H}}\sum_{h = 1}^{\mathcal{H}} \bar{X}^{(h)}(t_{i+1})^{\top}$ and calculate $I_{xx}^{i}$ in \eqref{ls_1} as
\bex
I_{xx}^{i} \approx 
  \frac{1}{\mathcal{H}}\sum_{h = 1}^{\mathcal{H}} \left[\sum_{g=1}^{\mathcal{G}} X^{(h)}(t_{g})^{\top}\otimes X^{(h)}(t_{g})^{\top} \delta t\right].
\eex
Similarly, we can approximate $\delta_{xx}, \delta_{uu}, I_{xx}, I_{xu}$ and $I_{xv}$.

Now, we are ready to give the following ADP algorithm for practical online implementation.

\begin{breakablealgorithm}
  \caption{Model-free PI for LQZSG problem} 
  \label{algo2}
  \renewcommand{\algorithmicrequire}{\textbf{Initialization:}}
\begin{algorithmic}[1]
  \Require Choose a matrix $P_{u}$ such that there exists a stabilizer $\left(L_{u}, 0\right)$ of the system \eqref{sde} satisfying
   \bex x^{\top}P_{u}x-\mathbb{E}\left[X(t)^{\top}P_{u}X(t)\right] \geq \mathbb{E}\left[\int_{0}^{t}\left(X(s)^{\top}QX(s)+X(s)^{\top}L_{u}^{\top}RL_{u}X(s)+\gamma^{-2}X(s)^{\top}P_{u}\Lambda P_{u}X(s)\right)ds\right] .\eex 

  \State Collect data through $\left(u,v\right) = \left(L_{u}X + e_{u},e_{v}\right)$, where $e_{u}$ and $e_{v}$ are exploration signals.
  \State Set $L_{v}^{(0)} = 0$, and let $k \leftarrow 0$.
  \Repeat
  \State Let $j \leftarrow 0$, and set $L_{u}^{(k+1,0)} = L_{u}$.
  \Repeat
  \State Solve for $P_u^{(k+1,j+1)}$, $\tilde{B}_{1}^{(k+1,j+1)}$, $\tilde{B}_{2}^{(k+1,j+1)}$ and $\tilde{D}^{(k+1,j+1)}$ using least squares method \eqref{ls_2}.
  \State \bex L_{u}^{(k+1,j+1)} \leftarrow -\left(R+\tilde{D}^{(k+1,j+1)}\right)^{-1}\left(\tilde{B}_{1}^{(k+1,j+1)}\right)\eex
  \State $j \leftarrow j+1$
  \Until{$\Vert P_u^{(k+1,j)} - P_u^{(k+1,j-1)}\Vert \leq \epsilon_{1}$}
  \State $L_{v}^{(k+1)} \leftarrow  \gamma^{-2}\tilde{B}_{2}^{(k+1,j)}$
  \State $k \leftarrow k+1$
  \Until{$\Vert L_{v}^{(k)} - L_{v}^{(k-1)}\Vert \leq \epsilon$}
  \State Output $P_u^{(k,j)}$ as the stabilizing solution $P^{(\ast)}$ of GARE \eqref{are_1}.
\end{algorithmic}
\end{breakablealgorithm}

\begin{remark}
Since the initialization in Algorithm \ref{algo2} does not know information about matrix $B_2$, we assume that there exists a matrix $\Lambda \in \mathbb{S}^{n}$ satisfying $B_{2}B_{2}^{\top} \leq \Lambda$.
\end{remark}

Next, we show that the convergence of Algorithm \ref{algo2} can be guaranteed under some rank condition. The proof of Lemma \ref{lem_rank} is similar to the proof of \citep[Lemma 6]{jiang2012computational}, and thus is omitted.

\begin{lemma}
\label{lem_rank}
If there exists an integer $l_{0} > 0$, such that, for all $l \geq l_{0}$,
\be
\label{rank}
rank \left(\left[I_{xx},~ I_{xu},~ I_{xv},~ \delta_{uu}\right]\right) = \frac{n(n+1)}{2} + m_{1}n + m_{2}n + \frac{m_{1}(m_{1}+1)}{2},
\ee
then $\Phi^{(k+1,j)}$ has full column rank for all $j \in \mathbb{Z}_{+}$.
\end{lemma}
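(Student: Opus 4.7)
The plan is to argue by contradiction. Suppose $\Phi^{(k+1,j)}$ does not have full column rank, so there is a nonzero $y \in \mathrm{null}(\Phi^{(k+1,j)})$. I partition $y = [y_1^\top, y_2^\top, y_3^\top, y_4^\top]^\top$ conformably with the four column blocks of $\Phi^{(k+1,j)}$ and encode these blocks back into matrices by writing $y_1 = svec(Y)$ for some $Y \in \mathbb{S}^n$, $y_2 = vec(M_1)$, $y_3 = vec(M_2)$, and $y_4 = svec(M_3)$ with $M_3 \in \mathbb{S}^{m_1}$. The aim is to show that this forces $[I_{xx},\, I_{xu},\, I_{xv},\, \delta_{uu}]$ to be rank deficient, contradicting \eqref{rank}.

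The first key step is an It\^{o} computation on $X(s)^{\top} Y X(s)$ along the trajectories of \eqref{sde}. Mimicking the derivation in Proposition \ref{pro_ls}, integrating from $t_i$ to $t_{i+1}$ and taking the conditional expectation $\mathbb{E}^{\mathcal{F}_{t_i}}$ yields the integrated Lyapunov-type identity
\begin{equation*}
\delta_{xx}^{i}\, svec(Y) = I_{xx}^{i}\, vec(YA + A^\top Y + C^\top Y C) + 2 I_{xu}^{i}\, vec(B_1^\top Y + D^\top Y C) + 2 I_{xv}^{i}\, vec(B_2^\top Y) + \delta_{uu}^{i}\, svec(D^\top Y D).
\end{equation*}
Stacking over $i = 0,\dots,N-1$ and substituting the resulting formula for $\delta_{xx}\, svec(Y)$ into $\Phi^{(k+1,j)} y = 0$, I can regroup the terms by the four data blocks into
\begin{equation*}
[I_{xx},\, I_{xu},\, I_{xv},\, \delta_{uu}]\, \tilde y = 0,
\end{equation*}
where $\tilde y$ is a block vector whose entries are explicit affine expressions in $Y, M_1, M_2, M_3, L_u^{(k+1,j)}$ and $L_v^{(k)}$.

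Now I invoke the hypothesis \eqref{rank}: each of the four blocks of $\tilde y$ must vanish. The $I_{xu}$, $I_{xv}$, and $\delta_{uu}$ blocks immediately pin down $M_1 = B_1^\top Y + D^\top Y C$, $M_2 = B_2^\top Y$, and $M_3 = D^\top Y D$. Substituting these identities into the $I_{xx}$ block and symmetrizing the resulting $n \times n$ matrix equation (by adding it to its transpose and dividing by two) collapses it into the stochastic Lyapunov equation
\begin{equation*}
\tilde A_{k,j}^{\top} Y + Y \tilde A_{k,j} + \tilde C_{k,j}^{\top} Y \tilde C_{k,j} = 0,
\end{equation*}
where $\tilde A_{k,j} = A + B_1 L_u^{(k+1,j)} + B_2 L_v^{(k)}$ and $\tilde C_{k,j} = C + D L_u^{(k+1,j)}$.

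By Theorem \ref{th2.3}(i), $(L_u^{(k+1,j)}, L_v^{(k)})$ is a stabilizer of \eqref{sde}, so the closed-loop pair $(\tilde A_{k,j}, \tilde C_{k,j})$ is mean-square stable and the above Lyapunov equation admits only the symmetric solution $Y = 0$. This in turn forces $M_1 = 0$, $M_2 = 0$, $M_3 = 0$, and hence $y = 0$, contradicting the initial choice of $y$. The step I expect to be the main obstacle is the bookkeeping that reduces $\Phi^{(k+1,j)} y = 0$ to the clean form $[I_{xx},\, I_{xu},\, I_{xv},\, \delta_{uu}]\, \tilde y = 0$; in particular, matching the $\bar L_u^{(k+1,j)}\, y_4$ term against $vec(L_u^{(k+1,j)\top} D^\top Y D L_u^{(k+1,j)})$ via the $svec$/$vec$ identities requires care. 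Once that identification is in place, the Lyapunov uniqueness under mean-square stability closes the argument.
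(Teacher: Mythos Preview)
Your proposal is correct and is precisely the standard argument the paper defers to (it omits the proof, citing \cite[Lemma~6]{jiang2012computational}): assume a nonzero null vector, use It\^{o}'s formula to replace $\delta_{xx}\,svec(Y)$, reduce to $[I_{xx},I_{xu},I_{xv},\delta_{uu}]\tilde y=0$, and conclude via the closed-loop Lyapunov equation and the stabilizer property from Theorem~\ref{th2.3}. The only nuance worth making explicit in your write-up is that the rank hypothesis forces the $I_{xx}$-block of $\tilde y$ to equal $vec(S)$ for a skew-symmetric $S$ (rather than $0$), which is exactly why you need the symmetrization step before arriving at $\tilde A_{k,j}^{\top}Y+Y\tilde A_{k,j}+\tilde C_{k,j}^{\top}Y\tilde C_{k,j}=0$.
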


Next, we prove the convergence of Algorithm \ref{algo2}.
\begin{theorem}
Under rank condition \eqref{rank}, $\left\{P_u^{(k,j)}\right\}_{j=0}^{\infty}$ and $\left\{L_u^{(k,j)}\right\}_{j=0}^{\infty}$ converge to $P_u^{(k,\ast)}$ and $L_u^{(k,\ast)}$($k\ge 1$) in the inner loop, and $\left\{P_u^{(k,\ast)}\right\}_{k=0}^{\infty}$ converges to the stabilizing solution $P^{(\ast)}$ of the GARE \eqref{are_1} in the outer loop .

\end{theorem}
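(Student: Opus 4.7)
The plan is to reduce the convergence analysis of the model-free Algorithm~\ref{algo2} to the already-established convergence of the model-based Algorithm~\ref{algo1}. The key insight is that Proposition~\ref{pro_ls} tells us the iterates produced by \eqref{PI2} and \eqref{cf_u} \emph{automatically satisfy} the one-step relation \eqref{ls_1}, and therefore the stacked relation \eqref{ls_3}. Hence $\bigl(svec(P_u^{(k+1,j+1)}),\ vec(\tilde B_1^{(k+1,j+1)}),\ vec(\tilde B_2^{(k+1,j+1)}),\ svec(\tilde D^{(k+1,j+1)})\bigr)$ coming from Algorithm~\ref{algo1} is always \emph{a} solution of the linear system in Algorithm~\ref{algo2}. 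If I can show this solution is \emph{unique}, then the two algorithms produce identical sequences, and the convergence statement is inherited directly from Theorem~\ref{th2.3} (for the inner loop) and Theorem~2.4 (for the outer loop).

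Uniqueness is where Lemma~\ref{lem_rank} enters. First I would verify inductively that the inner-loop iterates are well-defined: at $j=0$, the initial $L_u^{(k+1,0)}=L_u$ stabilizes \eqref{sde} in the sense required, and assuming the construction works up to step $j$, the rank condition \eqref{rank} guarantees that $\Phi^{(k+1,j)}$ has full column rank. Consequently $\Phi^{(k+1,j)\top}\Phi^{(k+1,j)}$ is invertible and the least-squares formula \eqref{ls_2} returns the unique vector solving \eqref{ls_3}. Combined with the previous paragraph, this forces the least-squares output to coincide with the Algorithm~\ref{algo1} iterate $P_u^{(k+1,j+1)}$ (together with the auxiliary quantities $\tilde B_1^{(k+1,j+1)}, \tilde B_2^{(k+1,j+1)}, \tilde D^{(k+1,j+1)}$). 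The policy-improvement step in Algorithm~\ref{algo2} then matches \eqref{cf_u} because, by definition, $\tilde B_1^{(k+1,j+1)} = B_1^{\top}P_u^{(k+1,j+1)} + D^{\top}P_u^{(k+1,j+1)}C$ and $\tilde D^{(k+1,j+1)} = D^{\top}P_u^{(k+1,j+1)}D$, so $L_u^{(k+1,j+1)}$ agrees with the model-based update; analogously $L_v^{(k+1)}=\gamma^{-2}\tilde B_2^{(k+1,j)}$ reproduces the outer-loop rule.

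Once this identification is in place, the convergence conclusions follow immediately: the inner-loop claim that $P_u^{(k,j)} \to P_u^{(k,\ast)}$ and $L_u^{(k,j)} \to L_u^{(k,\ast)}$ is exactly Theorem~\ref{th2.3}(iii), and the outer-loop claim that $P_u^{(k,\ast)} \to P^{(\ast)}$ is exactly the third statement of Theorem~2.4 applied to the sequence $\{P_v^{(k)}\}$, which by construction equals the sequence of inner-loop limits.

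The main obstacle in this approach is the inductive bookkeeping needed to certify that at every outer iteration $k$ the rank condition continues to imply the full column rank of $\Phi^{(k+1,j)}$ for \emph{every} $j$. The concern is that the columns of $\Phi^{(k+1,j)}$ depend on $L_u^{(k+1,j)}$ and $L_v^{(k)}$ through $I_n\otimes L_u^{(k+1,j)\top}$, $I_n\otimes L_v^{(k)\top}$, and $\bar L_u^{(k+1,j)}$, so a priori one has to rule out that a particular iterate creates a rank deficiency. This is essentially the content of Lemma~\ref{lem_rank}: rank invariance under these block linear transformations reduces to the full column rank of $\bigl[I_{xx},\ I_{xu},\ I_{xv},\ \delta_{uu}\bigr]$, which is the hypothesis \eqref{rank}. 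I would cite this lemma directly to close the induction, which makes the remaining argument essentially bookkeeping plus invocation of the already-proved theorems.
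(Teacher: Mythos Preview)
Your proposal is correct and follows essentially the same route as the paper: use the rank condition \eqref{rank} via Lemma~\ref{lem_rank} to force uniqueness of the least-squares solution \eqref{ls_2}, invoke Proposition~\ref{pro_ls} to identify the Algorithm~\ref{algo2} iterates with those of Algorithm~\ref{algo1}, and then inherit convergence from the model-based analysis. The paper's proof is in fact terser than yours---it simply states the equivalence and defers to \cite{sun2023reinforcement} for the convergence argument---whereas you spell out the inductive bookkeeping and cite Theorems~\ref{th2.3} and~2.4 directly, but the underlying logic is the same.
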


\begin{proof}
Under condition \eqref{rank}, \eqref{ls_2} has unique solutions $P_u^{(k+1,j+1)}$, $\tilde{B}_{1}^{(k+1,j+1)}$, $\tilde{B}_{2}^{(k+1,j+1)}$ and $\tilde{D}^{(k+1,j+1)}$. Therefore, according to Proposition \ref{pro_ls}, Algorithm \ref{algo2} is equivalent to Algorithm \ref{algo1}. 

Then, without considering the estimation error, the proof of convergence of Algorithm \ref{algo2} is similar to that of \citep[Theorem 4.7]{sun2023reinforcement}. 
\end{proof}

It is noted that errors are introduced in the computation of expectations and integrals in $\Phi^{(k+1,j)}$ and $\Theta^{(k+1,j)}$. Therefore, the least squares solution of \eqref{ls_2} is actually
\be
\label{ls_4}
\begin{aligned}
\begin{bmatrix} svec\left(\hat{P}_u^{(k+1,j+1)}\right) \\  vec\left(\hat{\tilde{B}}_{1}^{(k+1,j+1)}\right)\\ vec\left(\hat{\tilde{B}}_{2}^{(k+1,j+1)}\right)\\svec\left(\hat{\tilde{D}}^{(k+1,j+1)}\right)\end{bmatrix} = \left(\Phi^{(k+1,j)^{\top}}\Phi^{(k+1,j)} \right)^{-1}\Phi^{(k+1,j)^{\top}}\Theta^{(k+1,j)}.
\end{aligned}
\ee 
where $\hat{P}_u^{(k+1,j+1)}$, $\hat{\tilde{B}}_{1}^{(k+1,j+1)}$, $\hat{\tilde{B}}_{2}^{(k+1,j+1)}$ and $\hat{\tilde{D}}^{(k+1,j+1)}$ are estimates of $P_u^{(k+1,j+1)}$, $\tilde{B}_{1}^{(k+1,j+1)}$, $\tilde{B}_{2}^{(k+1,j+1)}$ and $\tilde{D}^{(k+1,j+1)}$.

In the next section, we will show that the convergence of the inner and outer loops can be given by Theorem \ref{thm_main} and Theorem \ref{main_thm_out} when the estimation error satisfies certain conditions.

\section{Robustness analysis of PI}
In reality, model-free policy iteration (PI) can hardly be implemented precisely and exactly because of the existence of various errors (possibly) induced by expectation estimation, state estimation, and so on. So it is of great interest to study whether the PI algorithm is robust to errors in the iterative process. To consider the estimation error in Algorithm \ref{algo2}, we present Algorithm \ref{algo3}.

Denote 
\bex
\begin{aligned}
   M\left(P\right) &:= \begin{bmatrix}Q + A^{\top}P + PA  + C^{\top}PC & PB_{1} + C^{\top}PD & PB_{2}  \\ B_{1}^{\top}P+D^{\top}PC & R+D^{\top}PD & O \\ B_{2}^{\top}P & O & -\gamma^{2} \end{bmatrix} \\ &=\left[\begin{array}{c|c|c}
    {[M(P)]_{x x}} & {[M(P)]_{u x}^{\top}} & {[M(P)]_{v x}^{\top}} \\
    \hline[M(P)]_{u x} & {[M(P)]_{u u}} & O \\  \hline {[M(P)]_{v x}} & O & {[M(P)]_{v v}}
    \end{array}\right].
\end{aligned}
\eex

Then, \eqref{PI2} is equivalent to
\bex
\mathcal{H}\left(M\left(P_u^{(k+1,j+1)}\right),L_{u}^{(k+1,j)}, L_{v}^{(k)} \right) = 0,
\eex
where 
\bex
\mathcal{H}\left(M\left(P_u^{(k+1,j+1)}\right),L_{u}^{(k+1,j)}, L_{v}^{(k)} \right) = \begin{bmatrix} I_{n} & L_{u}^{(k+1,j)^{\top}} & L_{v}^{(k)^{\top}} \end{bmatrix} M\left(P_u^{(k+1,j+1)} \right) \begin{bmatrix} I_{n} \\ L_{u}^{(k+1,j)} \\ L_{v}^{(k)} \end{bmatrix}.
\eex

\begin{breakablealgorithm}
  \caption{Robust PI}
  \label{algo3}
  \renewcommand{\algorithmicrequire}{\textbf{Initialization:}}
\begin{algorithmic}[1]
  % \Require input parameters A, B, C
  % \Ensure  output result
  \Require Choose a matrix $P_{u}$ such that there exists a stabilizer $\left(L_{u}, 0\right)$ of the system \eqref{sde} satisfying
  \bex
  \left(A +B_{1}L_{u}\right)^{\top}P_{u} +P_{u}\left(A +B_{1}L_{u}\right)+\left(C+DL_{u}\right)^{\top}P_{u}\left(C+DL_{u}\right)+Q+\gamma^{-2}P_{u}B_{2}B_{2}^{\top}P_{u}+L_{u}^{\top}RL_{u} \leq 0. \eex
  \State Set $\hat{L}_{v}^{(0)} = 0$, and let $k \leftarrow 0$.
  \Repeat
  \State Let $j \leftarrow 0$, and set $\hat{L}_{u}^{(k+1,0)} = L_{u}$.
  \Repeat
  \State (Policy evaluation) Obtain $\hat{M}^{(k+1,j+1)} = \Delta M^{(k+1,j+1)} + M\left(\hat{P}_u^{(k+1,j+1)}\right) $, where $\Delta M^{(k+1,j+1)} \in \mathbb{S}^{n+m_{1}+m_{2}}$ is a disturbance, $\hat{P}_u^{(k+1,j+1)} \in  \mathbb{S}^{n}$ satisfies
  \be
  \mathcal{H}\left(M\left(\hat{P}_u^{(k+1,j+1)}\right),\hat{L}_{u}^{(k+1,j)}, \hat{L}_{v}^{(k)} \right) =0 
  \ee 
   
    \State (Policy improvement) Get the improved policy by
    \bex
    \hat{L}_{u}^{(k+1,j+1)} \leftarrow   -\left[\hat{M}^{(k+1,j+1)}\right]_{u u}^{-1}\left[\hat{M}^{(k+1,j+1)}\right]_{u x}
    \eex
    \State $j \leftarrow j+1$
    \Until{$\Vert \hat{P}_u^{(k+1,j)} - \hat{P}_u^{(k+1,j-1)}\Vert \leq \epsilon_{1}$}
    \State $\hat{L}_{v}^{(k+1)} \leftarrow  -\left[\hat{M}^{(k+1,j)}\right]_{v v}^{-1}\left[\hat{M}^{(k+1,j)}\right]_{v x}$
    \State $k \leftarrow k+1$
    \Until{$\Vert \hat{L}_{v}^{(k)} -\hat{L}_{v}^{(k-1)}\Vert \leq \epsilon$}
\end{algorithmic}
\end{breakablealgorithm}

\begin{remark}
The error $\Delta M^{(k+1,j+1)} = \hat{M}^{(k+1,j+1)}- M\left(\hat{P}_u^{(k+1,j+1)}\right)$ can arise from a variety of factors. In Algorithm \ref{algo2}, the error factors mainly come from using the average estimated value of the data sample instead of the expected value and using the EM scheme in collecting the data. The formation of error $\Delta M^{(k+1,j+1)}$ in Algorithm \ref{algo3} also includes but is not limited to the following situations: disturbance of input data in dynamic systems and estimation errors of matrices $Q$, $R$,  and $\gamma$ in the cost function in inverse reinforcement learning.
\end{remark}

Next, we will discuss the robustness of the inner and outer loops in Algorithm \ref{algo3} respectively.

\subsection{Robustness analysis of inner loop}

For $X,Y,Z \in \mathbb{R}^{n\times n}$, define

\bex
\begin{aligned}
\mathcal{L}_{X,Z}(Y) &:= X^{\top}Y + YX + Z^{\top}YZ,~~ \mathcal{P}(X):=I_n \otimes X^{\top}+X^{\top} \otimes I_n,~~ \mathcal{Q}(Z):=Z^{\top} \otimes Z^{\top}, \\ 
\mathcal{A}\left(Y\right) &:= A^{(k)} - B\left(R + D^{\top}YD\right)^{-1}\left(B^{\top}Y+D^{\top}YC\right),~~\mathcal{C}\left(Y\right) := C - D\left(R + D^{\top}YD\right)^{-1}\left(B^{\top}Y+D^{\top}YC\right),
\end{aligned}
\eex
where $A^{(k)} = A  + B_{2}L_{v}^{(k)}$.

Then it is easy to check
\be
\label{vec_XZ}
vec\left(\mathcal{L}_{X,Z}(Y)\right) = \left(\mathcal{P}(X)+\mathcal{Q}(Z)\right)vec(Y).
\ee

For example, $Y \in \mathbb{S}^{n}$ is a stabilizing solution of \eqref{are_1}
, then by \citep[Theorem 1]{rami2000linear} we know that $\mathcal{P}\left(\mathcal{A}\left(Y\right)\right)+\mathcal{Q}\left(\mathcal{C}\left(Y\right)\right)$ is Hurwitz, and \eqref{vec_XZ} implies the existence of the inverse operator of $\mathcal{L}_{\mathcal{A}\left(Y\right),\mathcal{C}\left(Y\right)}(\cdot)$ on $\mathbb{S}^{n}$.

In the inner loop of Algorithm \ref{algo1}, we make $P_u^{(k+1,0)} = P_u$ and $L_u^{(k+1,0)} = -\left[M\left(P_u^{(k+1,0)}\right)\right]_{u u}^{-1}\left[M\left(P_u^{(k+1,0)}\right)\right]_{u x}$, and then $\left(L_u^{(k+1,0)}, 0\right)$ remains a stabilizer of the system \eqref{sde}. Inserting \eqref{cf_u} into \eqref{PI2}, the sequence $\left\{P_u^{(k+1,j)}\right\}_{j=0}^{\infty}$ generated by the inner loop satisfies

\be
\label{P_L-}
\begin{aligned}
 P_u^{(k+1,j+1)} =   \mathcal{L}^{-1}_{\mathcal{A}\left(P_{u}^{(k+1,j)}\right),\mathcal{C}\left(P_{u}^{(k+1,j)}\right)}\bigg(-Q^{(k)} - \left[M\left(P_u^{(k+1,j)}\right)\right]_{u x}^{\top}\left[M\left(P_u^{(k+1,j)}\right)\right]_{u u}^{-1}R\left[M\left(P_u^{(k+1,j)}\right)\right]_{u u}^{-1}\left[M\left(P_u^{(k+1,j)}\right)\right]_{u x} \bigg),
\end{aligned}
\ee
where $Q^{(k)} = Q - \gamma^{2}L_{v}^{(k)^{\top}}L_{v}^{(k)}$.

By \eqref{vec_XZ}, the above equation is equivalent to
\bex
\begin{aligned}
 & vec\left(P_u^{(k+1,j+1)}\right) \\  = & \left(\mathcal{P}\left(\mathcal{A}\left(P_{u}^{(k+1,j)}\right)\right)+\mathcal{Q}\left(\mathcal{C}\left(P_{u}^{(k+1,j)}\right)\right)\right)^{-1} vec\bigg(-Q^{(k)} - \left[M\left(P_u^{(k+1,j)}\right)\right]_{u x}^{\top}\left[M\left(P_u^{(k+1,j)}\right)\right]_{u u}^{-1}R\left[M\left(P_u^{(k+1,j)}\right)\right]_{u u}^{-1}\left[M\left(P_u^{(k+1,j)}\right)\right]_{u x} \bigg),
\end{aligned}
\eex

Thus, by \eqref{P_L-}, it is clear that the inner loop of Algorithm \ref{algo1} is a discrete-time nonlinear system, and $P_u^{(k+1,\ast)}$ is an equilibrium by Theorem \eqref{th2.3}. The following lemma states that $P_u^{(k+1,\ast)}$ is actually a locally exponentially stable equilibrium.

\begin{lemma}
  \label{lemma1}
  For any $\sigma \in (0,1)$, there exist $\delta_{0}(\sigma) > 0$ and $c_{0}\left(\delta_{0}\right)>0$ such that for any $P_u^{(k+1,j)} \in \mathcal{B}_{\delta_{0}}\left(P_u^{(k+1,\ast)}\right)$, $\left(L_u^{(k+1,j)},L_v^{(k)}\right)$ is a stabilizer of system \eqref{sde}, and 
  \be
  \left\|P_u^{(k+1,j+1)}-P_u^{(k+1,\ast)}\right\|_F \leq c_0\left\|P_u^{(k+1,j)}-P_u^{(k+1,\ast)}\right\|_F^2.
  \ee

  Especially, \eqref{P_L-} is locally exponentially stable at $P_u^{(k+1,\ast)}$, i.e.,
\be
  \left\|P_u^{(k+1,j+1)}-P_u^{(k+1,\ast)}\right\|_F \leq \sigma\left\|P_u^{(k+1,j)}-P_u^{(k+1,\ast)}\right\|_F.
\ee
\end{lemma}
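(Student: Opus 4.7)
\bigskip

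\noindent\textbf{Proof proposal.} The plan is to recognize that the inner loop iteration \eqref{P_L-} is precisely the Kleinman/Newton iteration for the Riccati equation obtained by freezing the $v$-player at $L_{v}^{(k)}$ in the game, so Lemma \ref{lemma1} reduces to the classical local quadratic convergence of Newton's method, adapted to the stochastic LQ setting. To make this precise I would first set up a neighborhood argument. Since $P_{u}^{(k+1,\ast)}$ is the stabilizing solution, Theorem \ref{th2.3} implies $(L_{u}^{(k+1,\ast)}, L_{v}^{(k)})$ stabilizes \eqref{sde}, which by the cited result of \citep{rami2000linear} means the matrix $\mathcal{P}(\mathcal{A}(P_{u}^{(k+1,\ast)}))+\mathcal{Q}(\mathcal{C}(P_{u}^{(k+1,\ast)}))$ is Hurwitz; equivalently, $\mathcal{L}_{\mathcal{A}(P_{u}^{(k+1,\ast)}),\mathcal{C}(P_{u}^{(k+1,\ast)})}$ is invertible on $\mathbb{S}^{n}$. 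Moreover $[M(P)]_{uu}=R+D^{\top}PD\succeq R\succ 0$ is uniformly invertible. By continuity of $P\mapsto \mathcal{A}(P)$, $P\mapsto\mathcal{C}(P)$ and of matrix inversion, there is a radius $\delta_{1}>0$ such that for any $P_{u}^{(k+1,j)}\in \mathcal{B}_{\delta_{1}}(P_{u}^{(k+1,\ast)})$ the operator $\mathcal{L}_{\mathcal{A}(P_{u}^{(k+1,j)}),\mathcal{C}(P_{u}^{(k+1,j)})}$ remains invertible with a uniform bound on $\|\mathcal{L}^{-1}\|$, and, since stabilizability is an open property, $(L_{u}^{(k+1,j)},L_{v}^{(k)})$ stabilizes \eqref{sde}. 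This already yields claim (i) of the lemma.

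The next step is to derive the quadratic error estimate. Writing $E_{j}:=P_{u}^{(k+1,j)}-P_{u}^{(k+1,\ast)}$ and using both \eqref{P_L-} for $P_{u}^{(k+1,j+1)}$ and the fact that the stabilizing $P_{u}^{(k+1,\ast)}$ satisfies the same relation with $L_{u}^{(k+1,j)}$ replaced by $L_{u}^{(k+1,\ast)}$, I would subtract the two equations and use the identity $\mathcal{A}(P)-\mathcal{A}(P_{u}^{(k+1,\ast)})=B_{1}\Delta L$, $\mathcal{C}(P)-\mathcal{C}(P_{u}^{(k+1,\ast)})=D\Delta L$ with $\Delta L:=L_{u}^{(k+1,j)}-L_{u}^{(k+1,\ast)}$. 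After rearrangement, $E_{j+1}$ satisfies a Lyapunov-type equation
\[
\mathcal{L}_{\mathcal{A}(P_{u}^{(k+1,j)}),\mathcal{C}(P_{u}^{(k+1,j)})}(E_{j+1}) \;=\; \Delta L^{\top}(R+D^{\top}P_{u}^{(k+1,\ast)}D)\,\Delta L \;+\; \Delta L^{\top} D^{\top}E_{j}\,\mathcal{C}(P_{u}^{(k+1,j)}) \;+\; \mathcal{C}(P_{u}^{(k+1,j)})^{\top} E_{j} D\,\Delta L ,
\]
up to symmetric terms, which is the standard Kleinman residual: every term on the right is a product of two quantities that each vanish at $E_{j}=0$. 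Since the feedback map $P\mapsto L_{u}(P)=-[M(P)]_{uu}^{-1}[M(P)]_{ux}$ is continuously (in fact analytically) differentiable, $\|\Delta L\|_{F}\leq C_{1}\|E_{j}\|_{F}$ on $\mathcal{B}_{\delta_{1}}(P_{u}^{(k+1,\ast)})$, and combined with the uniform bound on $\mathcal{L}^{-1}$ from the first step this yields
\[
\|E_{j+1}\|_{F} \leq c_{0}\,\|E_{j}\|_{F}^{2},
\]
which is the quadratic estimate.

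Finally, exponential stability at rate $\sigma$ is immediate: choose $\delta_{0}(\sigma):=\min\{\delta_{1},\sigma/c_{0}\}$, so that whenever $\|E_{j}\|_{F}<\delta_{0}$ we have $\|E_{j+1}\|_{F}\leq c_{0}\delta_{0}\|E_{j}\|_{F}\leq \sigma\|E_{j}\|_{F}$, and in particular $E_{j+1}$ stays in $\mathcal{B}_{\delta_{0}}(P_{u}^{(k+1,\ast)})$, so the argument iterates. I expect the main obstacle to be the bookkeeping in the Kleinman residual identity above: expanding both Lyapunov equations, cancelling the $Q^{(k)}$-terms, and regrouping everything as an explicit product $(\Delta L)\cdot(\cdot)$ plus a remainder involving $E_{j}$ times $\Delta L$ requires care because of the stochastic cross terms $C^{\top}PC$ and $D^{\top}PD$, which do not appear in the deterministic Kleinman proof. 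Once this algebraic identity is verified, the rest of the argument is a direct application of continuity and the stabilizing-inverse bounds already established in the neighborhood step.
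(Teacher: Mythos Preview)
Your proposal is correct and follows essentially the same route as the paper: a continuity argument gives a neighborhood on which the closed-loop Lyapunov operator is invertible and the pair stabilizes, a Kleinman-type residual identity shows the one-step error is quadratic, and then $\delta_{0}=\min\{\delta_{1},\sigma/c_{0}\}$ yields the contraction.

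One remark on the bookkeeping you flagged as the obstacle: the cross-terms $\Delta L^{\top}D^{\top}E_{j}\,\mathcal{C}(\cdot)+\cdots$ in your displayed identity actually drop out, and the exact residual is simply
\[
\mathcal{L}_{\mathcal{A}(P_{u}^{(k+1,j)}),\,\mathcal{C}(P_{u}^{(k+1,j)})}(E_{j+1})\;=\;-\,\Delta L^{\top}\bigl(R+D^{\top}P_{u}^{(k+1,\ast)}D\bigr)\Delta L,
\]
which follows once you use the first-order optimality relation $B_{1}^{\top}P_{u}^{(k+1,\ast)}+D^{\top}P_{u}^{(k+1,\ast)}\,\mathcal{C}(P_{u}^{(k+1,\ast)})=-R\,L_{u}^{(k+1,\ast)}$ when shifting the operator from $(\mathcal{A}(P^{\ast}),\mathcal{C}(P^{\ast}))$ to $(\mathcal{A}(P^{j}),\mathcal{C}(P^{j}))$. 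The paper parameterizes the same residual differently: instead of introducing $\Delta L$, it adds and subtracts terms in the Riccati equation at $P_{u}^{(k+1,\ast)}$ and packages the leftover into a four-term expression $\mathcal{Y}(E_{j})$ written directly in the blocks $[M(\cdot)]_{ux}$, $[M(\cdot)]_{uu}$ and their increments $[M(E_{j})]_{ux}$, $D^{\top}E_{j}D$. This is algebraically equivalent but heavier; your $\Delta L$-based version is the cleaner one once the identity above is in hand.
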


\begin{proof}
  Since $\left(L_u^{(k+1,\ast)},L_v^{(k)}\right)$ is a stabilizer of system \eqref{sde}, by continuity, there always exists a $\bar{\delta}_{0} >0$ such that $\left(L_u^{(k+1,j)},L_v^{(k)}\right)$ is a stabilizer of system \eqref{sde} for $P_u^{(k+1,j)} \in \bar{\mathcal{B}}_{\bar{\delta}_{0}}\left(P_u^{(k+1,\ast)}\right)$. Suppose $P_u^{(k+1,j)} \in \bar{\mathcal{B}}_{\bar{\delta}_{0}}\left(P_u^{(k+1,\ast)}\right)$. 
  By Theorem \ref{th2.3}, when $\left\{P_u^{(k+1,j)}\right\}_{j=0}^{\infty}$  converges, $P_{u}^{(k+1,\ast)}$ is a solution of GARE as follows
  \be
  \label{are_ast}
  A^{(k)^{\top}}P_{u}^{(k+1,\ast)} + P_{u}^{(k+1,\ast)}A^{(k)} + C^{\top}P_{u}^{(k+1,\ast)}C + Q^{(k)} - \left[M\left(P_u^{(k+1,\ast)}\right)\right]_{u x}^{\top}\left[M\left(P_u^{(k+1,\ast)}\right)\right]_{u u}^{-1}\left[M\left(P_u^{(k+1,\ast)}\right)\right]_{u x} =0.
  \ee
  Adding 
  \bex
  \begin{aligned}
  &-\left[M\left(P_u^{(k+1,j)}\right)\right]_{u x}^{\top}\left[M\left(P_u^{(k+1,j)}\right)\right]_{u u}^{-1}\left[M\left(P_u^{(k+1,\ast)}\right)\right]_{u x}-\left[M\left(P_u^{(k+1,\ast)}\right)\right]_{u x}^{\top}\left[M\left(P_u^{(k+1,j)}\right)\right]_{u u}^{-1}\left[M\left(P_u^{(k+1,j)}\right)\right]_{u x} \\ & + \left[M\left(P_u^{(k+1,j)}\right)\right]_{u x}^{\top}\left[M\left(P_u^{(k+1,j)}\right)\right]_{u u}^{-1}D^{\top}P_u^{(k+1,\ast)}D\left[M\left(P_u^{(k+1,j)}\right)\right]_{u u}^{-1}\left[M\left(P_u^{(k+1,j)}\right)\right]_{u x}
  \end{aligned}
  \eex
  to both sides of \eqref{are_ast} yields
  \be
  \label{L_P}
  \begin{aligned}
  &\mathcal{L}_{\mathcal{A}\left(P_{u}^{(k+1,j)}\right),\mathcal{C}\left(P_{u}^{(k+1,j)}\right)}\left(P_{u}^{(k+1,\ast)}\right) \\ = &-Q^{(k)} - \left[M\left(P_u^{(k+1,j)}\right)\right]_{u x}^{\top}\left[M\left(P_u^{(k+1,j)}\right)\right]_{u u}^{-1}R\left[M\left(P_u^{(k+1,j)}\right)\right]_{u u}^{-1}\left[M\left(P_u^{(k+1,j)}\right)\right]_{u x} + \mathcal{Y}\left(P_u^{(k+1,j)}-P_u^{(k+1,\ast)}\right),
  \end{aligned} 
  \ee

  where 
  \bex
  \begin{aligned}
  &\mathcal{Y}\left(P_u^{(k+1,j)}-P_u^{(k+1,\ast)}\right)\\ =  & \left[M\left(P_u^{(k+1,j)}-P_u^{(k+1,\ast)}\right)\right]_{u x}^{\top}\left[M\left(P_u^{(k+1,j)}\right)\right]_{u u}^{-1}\left[M\left(P_u^{(k+1,j)}-P_u^{(k+1,\ast)}\right)\right]_{u x} \\ & + \left[M\left(P_u^{(k+1,\ast)}\right)\right]_{u x}^{\top}\left[M\left(P_u^{(k+1,\ast)}\right)\right]_{u u}^{-1}D^{\top}\left(P_u^{(k+1,j)}-P_u^{(k+1,\ast)}\right)D\left[M\left(P_u^{(k+1,j)}\right)\right]_{u u}^{-1}\left[M\left(P_u^{(k+1,\ast)}\right)\right]_{u x} \\ & - \left[M\left(P_u^{(k+1,j)}\right)\right]_{u x}^{\top}\left[M\left(P_u^{(k+1,\ast)}\right)\right]_{u u}^{-1}D^{\top}\left(P_u^{(k+1,j)}-P_u^{(k+1,\ast)}\right)D\left[M\left(P_u^{(k+1,j)}\right)\right]_{u u}^{-1}\left[M\left(P_u^{(k+1,j)}\right)\right]_{u x}\\ & + \left[M\left(P_u^{(k+1,j)}\right)\right]_{u x}^{\top}\left[M\left(P_u^{(k+1,\ast)}\right)\right]_{u u}^{-1}D^{\top}\left(P_u^{(k+1,j)}-P_u^{(k+1,\ast)}\right)D\left[M\left(P_u^{(k+1,j)}\right)\right]_{u u}^{-1} \\ & \times D^{\top}\left(P_u^{(k+1,j)}-P_u^{(k+1,\ast)}\right)D\left[M\left(P_u^{(k+1,j)}\right)\right]_{u u}^{-1}\left[M\left(P_u^{(k+1,j)}\right)\right]_{u x}.
  \end{aligned} 
  \eex

  Subtracting \eqref{L_P} from \eqref{P_L-}, we have
  \bex
  P_u^{(k+1,j+1)}-P_u^{(k+1,\ast)} = \mathcal{L}^{-1}_{\mathcal{A}\left(P_{u}^{(k+1,j)}\right),\mathcal{C}\left(P_{u}^{(k+1,j)}\right)}\left(\mathcal{Y}\left(P_u^{(k+1,j)}-P_u^{(k+1,\ast)}\right)\right).
  \eex

  Taking norm on both sides of above euqation and using \eqref{vec_XZ}, we can find a $c_{0}$ such that
  \bex
  \left\|P_u^{(k+1,j+1)}-P_u^{(k+1,\ast)}\right\|_F \leq c_{0}\left\|P_u^{(k+1,j)}-P_u^{(k+1,\ast)}\right\|_F^2.
  \eex
 
\end{proof}

Next, consider the disturbance term $\Delta M^{(k+1,i)}$. We represent Algorithm \ref{algo3} also as a discrete-time nonlinear system and can indicate that the system is locally input-to-state stable by Lemma \ref{lemma1} (see \citep[Definition 2.1.]{jiang2004nonlinear}).

In the inner loop of Algorithm \ref{algo3}, we make $\hat{P}_u^{(k+1,0)} = P_u$, $\hat{L}_u^{(k+1,0)} = -\left[M\left(\hat{P}_u^{(k+1,0)}\right)\right]_{u u}^{-1}\left[M\left(\hat{P}_u^{(k+1,0)}\right)\right]_{u x}$ and $\Delta M^{(k+1,0)} =0$, and then $\left(\hat{L}_u^{(k+1,0)}, 0\right)$ remains a stabilizer of the system \eqref{sde}. The sequence $\left\{\hat{P}_u^{(k+1,j)}\right\}_{j=0}^{\infty}$ generated by the inner loop of Algorithm \ref{algo3} satisfies

\be
\label{P_e}
\begin{aligned}
\hat{P}_u^{(k+1,j+1)} = \mathcal{L}^{-1}_{\mathcal{A}\left(\hat{P}_{u}^{(k+1,j)}\right),\mathcal{C}\left(\hat{P}_{u}^{(k+1,j)}\right)}\bigg( & -Q^{(k)} - \left[M\left(\hat{P}_u^{(k+1,j)}\right)\right]_{u x}^{\top}\left[M\left(\hat{P}_u^{(k+1,j)}\right)\right]_{u u}^{-1}R\left[M\left(\hat{P}_u^{(k+1,j)}\right)\right]_{u u}^{-1}\left[M\left(\hat{P}_u^{(k+1,j)}\right)\right]_{u x} \bigg) \\ &+ \mathcal{E}\left(M\left(\hat{P}_u^{(k+1,j)}\right), \Delta M^{(k+1,j)}\right),
\end{aligned}
\ee
where 
\bex
\begin{aligned}
  & \mathcal{E}\left(M\left(\hat{P}_u^{(k+1,j)}\right), \Delta M^{(k+1,j)}\right) \\ = & \mathcal{L}^{-1}_{\left(A^{(k)} + B_{1} \hat{L}_{u}^{(k+1,j)}\right),\left(C + D \hat{L}_{u}^{(k+1,j)}\right)}\bigg(-Q^{(k)} - \hat{L}_{u}^{(k+1,j)^{\top}}R\hat{L}_{u}^{(k+1,j)} \bigg) \\ &- \mathcal{L}^{-1}_{\mathcal{A}\left(\hat{P}_{u}^{(k+1,j)}\right),\mathcal{C}\left(\hat{P}_{u}^{(k+1,j)}\right)}\bigg(-Q^{(k)} - \left[M\left(\hat{P}_u^{(k+1,j)}\right)\right]_{u x}^{\top}\left[M\left(\hat{P}_u^{(k+1,j)}\right)\right]_{u u}^{-1}R\left[M\left(\hat{P}_u^{(k+1,j)}\right)\right]_{u u}^{-1}\left[M\left(\hat{P}_u^{(k+1,j)}\right)\right]_{u x} \bigg).
\end{aligned}
\eex

The following lemma can be proved in a similar way as in \citep[Lemma 4]{pang2021robust}.
\begin{lemma}
  \label{lemma2}
  There exists a $d(\delta_{0})>0$ such that for all $\hat{P}_u^{(k+1,j)}\in \mathcal{B}_{\delta_{0}}\left(P_u^{(k+1,\ast)}\right)$, $\left(\hat{L}_u^{(k+1,j)},L_v^{(k)}\right)$ is a stabilizer of system \eqref{sde} and $\left[\hat{M}^{(k+1,j)}\right]_{u u}$ is invertible, if $\left\|\Delta M^{(k+1,j)} \right\|_{F} \leq d$.
\end{lemma}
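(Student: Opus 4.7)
The plan is to reduce the statement to two continuity arguments that both leverage compactness of $\bar{\mathcal{B}}_{\delta_0}(P_u^{(k+1,\ast)})$ together with the openness of the set of mean-square stabilizers. Throughout I will compare the perturbed policy $\hat{L}_u^{(k+1,j)}$ with the ``undisturbed'' policy $\tilde L_u^{(k+1,j)} := -[M(\hat{P}_u^{(k+1,j)})]_{uu}^{-1}[M(\hat{P}_u^{(k+1,j)})]_{ux}$ that Algorithm \ref{algo1} would produce from the same $\hat{P}_u^{(k+1,j)}$. By Lemma \ref{lemma1}, for every $\hat{P}_u^{(k+1,j)} \in \bar{\mathcal{B}}_{\delta_0}(P_u^{(k+1,\ast)})$ the pair $(\tilde L_u^{(k+1,j)}, L_v^{(k)})$ is a stabilizer of \eqref{sde}, so the task is only to control the deviation caused by $\Delta M^{(k+1,j)}$.

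First I would handle invertibility of $[\hat{M}^{(k+1,j)}]_{uu}$. Since $[M(\hat{P})]_{uu} = R + D^{\top}\hat{P}D$ with $R \succ 0$, and since the map $\hat{P}\mapsto R + D^{\top}\hat{P} D$ is continuous, there is a uniform bound $\lambda_{\min}([M(\hat{P})]_{uu}) \geq \underline{\lambda} > 0$ on the compact ball $\bar{\mathcal{B}}_{\delta_0}(P_u^{(k+1,\ast)})$. Choosing $d_1 < \underline{\lambda}$ forces $[\hat{M}^{(k+1,j)}]_{uu} = [M(\hat{P})]_{uu} + [\Delta M^{(k+1,j)}]_{uu} \succ 0$ whenever $\|\Delta M^{(k+1,j)}\|_F \leq d_1$, giving a uniform upper bound on $\|[\hat{M}^{(k+1,j)}]_{uu}^{-1}\|_F$ as well.

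Second, a direct resolvent identity yields $\hat{L}_u^{(k+1,j)} - \tilde L_u^{(k+1,j)} = [\hat{M}]_{uu}^{-1}[\Delta M]_{uu}[M(\hat{P})]_{uu}^{-1}[M(\hat{P})]_{ux} - [\hat{M}]_{uu}^{-1}[\Delta M]_{ux}$. Combined with the uniform bounds from the previous step and the continuity of $\hat{P}\mapsto [M(\hat{P})]_{ux}$ on the compact ball, this gives a class $\mathcal{K}$ bound $\|\hat L_u^{(k+1,j)} - \tilde L_u^{(k+1,j)}\|_F \leq \kappa(\|\Delta M^{(k+1,j)}\|_F)$ uniformly over $\hat P_u^{(k+1,j)} \in \bar{\mathcal{B}}_{\delta_0}(P_u^{(k+1,\ast)})$. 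I would then invoke the characterization used already in the proof of Lemma \ref{lemma1}: $(L_u, L_v^{(k)})$ stabilizes \eqref{sde} iff $\mathcal{P}(A + B_1 L_u + B_2 L_v^{(k)}) + \mathcal{Q}(C + D L_u)$ is Hurwitz. Since the set of Hurwitz matrices is open, eigenvalues depend continuously on entries, and the continuous image $\{\tilde L_u^{(k+1,j)} : \hat{P}\in \bar{\mathcal{B}}_{\delta_0}(P_u^{(k+1,\ast)})\}$ is compact, there exists a uniform radius $\rho > 0$ such that every $L_u$ within Frobenius distance $\rho$ of some $\tilde L_u$ in this family still yields a stabilizing pair with $L_v^{(k)}$. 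Taking $d := \min\{d_1, \kappa^{-1}(\rho)\}$ then produces the constant asserted in the lemma.

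The main obstacle is making the stability radius $\rho$ uniform over the entire compact family $\{\tilde L_u^{(k+1,j)}\}$: one has to argue that pointwise openness of the stabilizer set, combined with continuous parameter dependence, really does yield a single admissible radius rather than one that might collapse to zero along a subsequence. This is handled by a standard compactness argument (a finite subcover of the image, then take the minimum local radius), analogous to the step in \citep[Lemma 4]{pang2021robust}, and is the only non-routine point; the invertibility and resolvent-expansion steps are direct perturbation computations.
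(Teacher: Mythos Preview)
Your proposal is correct and follows essentially the same approach as the paper, which in fact does not spell out a proof of this lemma at all but simply states that it ``can be proved in a similar way as in \citep[Lemma 4]{pang2021robust}.'' Your two-step continuity/compactness argument---uniform positive-definiteness of $[M(\hat P)]_{uu}$ on the compact ball, a resolvent identity bounding $\hat L_u^{(k+1,j)}-\tilde L_u^{(k+1,j)}$ by $\|\Delta M^{(k+1,j)}\|_F$, and a uniform stability margin via openness of the Hurwitz set together with compactness of the image---is precisely the argument behind that cited lemma, adapted to the present stochastic setting.
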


By Lemma \ref{lemma2}, if $\left\| \Delta M\right\|_{\infty} \leq d$, then the sequence $\left\{\hat{P}_u^{(k+1,j)}\right\}_{j=0}^{\infty}$ satisfies \eqref{P_e}. The following lemma gives an upper bound of $\left\|\mathcal{E}\left(M\left(\hat{P}_u^{(k+1,j)}\right), \Delta M^{(k+1,j)}\right) \right\|_{F}$ in terms of $\left\|\Delta M^{(k+1,j)} \right\|_{F}$.

\begin{lemma}
\label{epsilon_ineq}
For any $\hat{P}_u^{(k+1,j)}\in \mathcal{B}_{\delta_{0}}\left(P_u^{(k+1,\ast)}\right)$ and any $c_{2}>0$, there exists a $0<\delta_{1}^{1}(\delta_{0},c_{2})\leq d$, independent of $\hat{P}_u^{(k+1,j)}$, where $d$ is defined in Lemma \ref{lemma2} such that
\bex
\left\|\mathcal{E}\left(M\left(\hat{P}_u^{(k+1,j)}\right), \Delta M^{(k+1,j)}\right) \right\|_{F} \leq c_{3}\left\|\Delta M^{(k+1,j)} \right\|_{F} < c_{2}
\eex
if $\left\|\Delta M^{(k+1,j)} \right\|_{F} < \delta_{1}^{1}$, where $c_{3}(\delta_{0})>0$.
\end{lemma}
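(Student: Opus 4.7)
The plan is to view $\mathcal{E}$ as a difference of two Lyapunov-equation solutions and to establish a Lipschitz bound in $\Delta M^{(k+1,j)}$ whose constant depends only on $\delta_0$. First I would introduce the noise-free feedback gain
\begin{equation*}
L_{0}:=-\bigl[M(\hat P_u^{(k+1,j)})\bigr]_{uu}^{-1}\bigl[M(\hat P_u^{(k+1,j)})\bigr]_{ux},
\end{equation*}
so that $\mathcal{A}(\hat P_u^{(k+1,j)})=A^{(k)}+B_1 L_0$, $\mathcal{C}(\hat P_u^{(k+1,j)})=C+DL_0$, and the right-hand side of the second term defining $\mathcal{E}$ equals $-Q^{(k)}-L_0^{\top}RL_0$. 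By Lemma \ref{lemma2}, for $\|\Delta M^{(k+1,j)}\|_F<d$ the block $[\hat M^{(k+1,j)}]_{uu}$ is invertible, so $\hat L:=\hat L_u^{(k+1,j)}$ is well defined; a first-order expansion of the matrix inverse then yields $\|\hat L-L_0\|_F\leq c_4(\delta_0)\,\|\Delta M^{(k+1,j)}\|_F$, where $c_4$ depends only on uniform bounds of $\|[M(P)]_{uu}^{-1}\|_F$ and $\|[M(P)]_{ux}\|_F$ on the compact ball $\bar{\mathcal{B}}_{\delta_0}(P_u^{(k+1,\ast)})$.

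Next I would set $(A_1,C_1):=(A^{(k)}+B_1\hat L,\,C+D\hat L)$, $Y_1:=-Q^{(k)}-\hat L^{\top}R\hat L$, $(A_2,C_2):=(A^{(k)}+B_1 L_0,\,C+DL_0)$, $Y_2:=-Q^{(k)}-L_0^{\top}RL_0$, and apply the resolvent identity
\begin{equation*}
\mathcal{L}_{A_1,C_1}^{-1}(Y_1)-\mathcal{L}_{A_2,C_2}^{-1}(Y_2)=\mathcal{L}_{A_1,C_1}^{-1}(Y_1-Y_2)+\mathcal{L}_{A_1,C_1}^{-1}\bigl((\mathcal{L}_{A_2,C_2}-\mathcal{L}_{A_1,C_1})\,\mathcal{L}_{A_2,C_2}^{-1}(Y_2)\bigr),
\end{equation*}
which represents $\mathcal{E}$ exactly. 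A direct expansion shows that $Y_1-Y_2$ and the perturbation operator $(\mathcal{L}_{A_2,C_2}-\mathcal{L}_{A_1,C_1})(\cdot)$ are polynomials in $\hat L-L_0$ whose coefficients are uniformly bounded on $\bar{\mathcal{B}}_{\delta_0}(P_u^{(k+1,\ast)})$, so each is $O(\|\hat L-L_0\|_F)$.

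The main obstacle is the uniform operator-norm bound on $\mathcal{L}_{A_i,C_i}^{-1}$. By Lemma \ref{lemma1} applied in the noise-free case, $(L_0,L_v^{(k)})$ is a stabilizer of \eqref{sde}, and by Lemma \ref{lemma2} the same holds for $(\hat L,L_v^{(k)})$. Hence, by the characterization recalled after \eqref{vec_XZ}, both $\mathcal{P}(A_i)+\mathcal{Q}(C_i)$ are Hurwitz, and $\mathcal{L}_{A_i,C_i}^{-1}$ exist. Continuity of the matrix inverse combined with compactness of the closed ball $\bar{\mathcal{B}}_{\delta_0}(P_u^{(k+1,\ast)})$ yields a uniform constant $c_5(\delta_0)$ with $\|\mathcal{L}_{A_2,C_2}^{-1}\|\leq c_5$, and a Neumann-series argument, using $\|\hat L-L_0\|_F\leq c_4\delta_1^1$, transfers the bound to $\mathcal{L}_{A_1,C_1}^{-1}$ whenever $\delta_1^1$ is sufficiently small.

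Combining all pieces yields $\|\mathcal{E}\|_F\leq c_3(\delta_0)\,\|\Delta M^{(k+1,j)}\|_F$ with $c_3$ proportional to $c_4 c_5$. To secure the strict inequality $c_3\|\Delta M^{(k+1,j)}\|_F<c_2$, it then suffices to pick $\delta_1^1:=\min\{d,\,c_2/(2c_3)\}$, which is positive and depends only on $\delta_0$ and $c_2$, as required.
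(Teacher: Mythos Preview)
Your proposal is correct and follows essentially the same route as the paper: bound $\|\hat L-L_0\|_F$ by $c_4(\delta_0)\|\Delta M^{(k+1,j)}\|_F$ via the perturbation of a matrix inverse, then control the difference of the two Lyapunov solutions using a resolvent-type identity together with continuity/compactness on $\bar{\mathcal{B}}_{\delta_0}(P_u^{(k+1,\ast)})$. The paper uses the explicit identity $X^{-1}Y-(X+\Delta X)^{-1}(Y+\Delta Y)=-X^{-1}\Delta Y+X^{-1}\Delta X(X+\Delta X)^{-1}(Y+\Delta Y)$ applied at both the gain and the vectorized Lyapunov-operator level, which is exactly your resolvent identity in coordinates; your Neumann-series step for $\mathcal{L}_{A_1,C_1}^{-1}$ is what the paper achieves by simply invoking continuity and Lemma~\ref{lemma2}.
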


\begin{proof}
  For $X \in \mathbb{R}^{n \times n}, Y \in \mathbb{R}^{n \times m}, \Delta X \in \mathbb{R}^{n \times n}$, and $\Delta Y \in \mathbb{R}^{n \times m}$, supposing $X$ and $X+\Delta X$ are invertible, then
  \be
  \label{ineq_1}
  \begin{aligned}
   \left\|X^{-1} Y-(X+\Delta X)^{-1}(Y+\Delta Y)\right\|_F &=  \left\|-X^{-1} \Delta Y+X^{-1} \Delta X(X+\Delta X)^{-1}(Y+\Delta Y)\right\|_F \\
  & \leq\left\|X^{-1}\right\|_F\left(\|\Delta Y\|_F+\left\|(X+\Delta X)^{-1}\right\|_F\|Y+\Delta Y\|_F\|\Delta X\|_F\right) .
  \end{aligned}
  \ee

For any $\hat{P}_u^{(k+1,j)}\in \mathcal{B}_{\delta_{0}}\left(P_u^{(k+1,\ast)}\right)$ and $\left\|\Delta M^{(k+1,j)} \right\|_{F} \leq d$ , by continuity and Lemma \ref{lemma2}, we have from \eqref{ineq_1} 
\be
\label{ineq_c4}
\begin{aligned}
& \left\| \left[M\left(\hat{P}_u^{(k+1,j)}\right)\right]_{u u}^{-1}\left[M\left(\hat{P}_u^{(k+1,j)}\right)\right]_{u x} - \left[\hat{M}^{(k+1,j)}\right]_{u u}^{-1}\left[\hat{M}^{(k+1,j)}\right]_{u x}\right\|_{F} \\  \leq & \left\| \left[M\left(\hat{P}_u^{(k+1,j)}\right)\right]_{u u}^{-1}\right\|_{F} \bigg( \left\|\Delta M^{(k+1,j)} \right\|_{F} + \left\| \left[\hat{M}^{(k+1,j)}\right]_{u u}^{-1}\right\|_{F} \left\| \left[\hat{M}^{(k+1,j)}\right]_{u x}^{-1}\right\|_{F} \left\|\Delta M^{(k+1,j)} \right\|_{F} \bigg)\\ \leq & c_{4}\left\|\Delta M^{(k+1,j)} \right\|_{F}
\end{aligned}
\ee

for some $c_{4}(\delta_{0},d) > 0 $. Using \eqref{ineq_c4}, it is easy to check
\bex
\left\|\mathcal{P}\left(A^{(k)} - B\left[\hat{M}^{(k+1,j)}\right]_{u u}^{-1}\left[\hat{M}^{(k+1,j)}\right]_{u x} \right) - \mathcal{P}\left(\mathcal{A}\left(\hat{P}_u^{(k+1,j)}\right)\right)\right\|_{F}\leq c_{5}\left\|\Delta M^{(k+1,j)} \right\|_{F}
\eex

\bex
\left\|\mathcal{Q}\left(C - D\left[\hat{M}^{(k+1,j+1)}\right]_{u u}^{-1}\left[\hat{M}^{(k+1,j)}\right]_{u x} \right) - \mathcal{Q}\left(\mathcal{C}\left(\hat{P}_u^{(k+1,j)}\right)\right)\right\|_{F}\leq c_{6}\left\|\Delta M^{(k+1,j)} \right\|_{F}
\eex

\bex
\begin{aligned}
\Bigg\| & vec \bigg(\left[M\left(\hat{P}_u^{(k+1,j)}, L_{v}^{(k)}\right)\right]_{u x}^{\top}\left[M\left(\hat{P}_u^{(k+1,j)}, L_{v}^{(k)}\right)\right]_{u u}^{-1}R\left[M\left(\hat{P}_u^{(k+1,j)}, L_{v}^{(k)}\right)\right]_{u u}^{-1}\left[M\left(\hat{P}_u^{(k+1,j)}, L_{v}^{(k)}\right)\right]_{u x} \\ &- \left[\hat{M}^{(k+1,j)}\right]_{u x}^{\top}\left[\hat{M}^{(k+1,j)}\right]_{u u}^{-1}R\left[\hat{M}^{(k+1,j)}\right]_{u u}^{-1}\left[\hat{M}^{(k+1,j)}\right]_{u x}\bigg)\Bigg\|_{F} \leq c_{7}\left\|\Delta M^{(k+1,j)} \right\|_{F}
\end{aligned}
\eex

for some $c_5(\delta_{0},d)>0, c_6(\delta_{0},d)>0$ and $c_7(\delta_{0},d)>0$. Then by continuity, \eqref{vec_XZ} and \eqref{ineq_1} 
\bex
\begin{aligned}
& \left\| \mathcal{E}\left(M\left(\hat{P}_u^{(k+1,j)}, L_{v}^{(k)}\right), \Delta M^{(k+1,j)}\right) \right\|_{F}\\ \leq & \left\| \left(\mathcal{P}\left(A^{(k)}-B\left[\hat{M}^{(k+1,j)}\right]_{u u}^{-1}\left[\hat{M}^{(k+1,j)}\right]_{u x}\right)+ \mathcal{Q}\left(C-D\left[\hat{M}^{(k+1,j)}\right]_{u u}^{-1}\left[\hat{M}^{(k+1,j)}\right]_{u x}\right)\right)^{-1}  \right\|_{F}  \\ &\Bigg(c_{7} + (c_{5}+c_{6})  \Big\| \left(\mathcal{P}\left(\mathcal{A}\left(P_{u}^{(k+1,j)}\right)\right) + \mathcal{Q}\left(\mathcal{C}\left(P_{u}^{(k+1,j)}\right)\right)\right)^{-1}\Big\|_{F} \left\|-Q^{(k)} - \left[M\left(\hat{P}_u^{(k+1,j)}, L_{v}^{(k)}\right)\right]_{u x}^{\top}\left[M\left(\hat{P}_u^{(k+1,j)}, L_{v}^{(k)}\right)\right]_{u u}^{-1} \right.\\ & \left. R \left[M\left(\hat{P}_u^{(k+1,j)}, L_{v}^{(k)}\right)\right]_{u u}^{-1}\left[M\left(\hat{P}_u^{(k+1,j)}, L_{v}^{(k)}\right)\right]_{u x}\right\|_{F}\Bigg)\left\|\Delta M^{(k+1,j)} \right\|_{F} \leq c_{3}(\delta_{0})\left\|\Delta M^{(k+1,j)} \right\|_{F}
\end{aligned}
\eex

Choosing $0 \leq \delta_{1}^{1} \leq d$ such that $c_{3}\delta_{1}^{1} < c_{2}$ completes the proof.
\end{proof}
\begin{lemma}
  \label{main_lemma1}
  For $\sigma$ and its associated $\delta_{0}$ in Lemma \ref{lemma1}, there exists $\delta_{1}(\delta_{0}) > 0$ such that if $\left\| \Delta M\right\|_{\infty} < \delta_{1}, \hat{P}_u^{(k+1,0)}\in \mathcal{B}_{\delta_{0}}\left(P_u^{(k+1,\ast)}\right)$

  \begin{enumerate}[(i)]
    \item $\left[\hat{M}^{(k+1,j)}\right]_{u u}$ is invertible and $\left(\hat{L}_u^{(k+1,j-1)},\hat{L}_v^{(k)}\right)$ is a stabilizer of system \eqref{sde}, ${\forall} j \in \mathbb{Z}_{+}, j > 0 $;
    \item the following local input-to-state stability (ISS) holds: \bex \left\| \hat{P}_u^{(k+1,j)} - P_u^{(k+1,\ast)} \right\|_{F} \leq  \beta \left(\left\| \hat{P}_u^{(k+1,0)} - P_u^{(k+1,\ast)} \right\|_{F}, j\right) + \kappa \left(\left\| \Delta M\right\|_{\infty} \right),\eex where class $\mathcal{KL}$ function $\beta (y,j) = \sigma^{j}y$, class $\mathcal{K}$ function $\kappa(y)=c_{3}y/(1-\sigma),y \in \mathbb{R}$ and $c_{3}(\delta_{0}) > 0$;
    \item $\lim _{j \rightarrow \infty}\left\| \Delta M_{j}\right\|_{\infty}= 0$  implies $\lim _{j \rightarrow \infty}\left\| \hat{P}_u^{(k+1,j)} - P_u^{(k+1,\ast)} \right\|_{F} = 0$.
  \end{enumerate}
\end{lemma}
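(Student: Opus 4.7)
The plan is to establish (i)--(iii) simultaneously by an induction on $j$ that converts the perturbed recursion \eqref{P_e} into a contraction-plus-disturbance inequality. Subtracting the nominal right-hand side of \eqref{P_L-} (evaluated at $\hat{P}_u^{(k+1,j)}$) from \eqref{P_e} isolates the perturbative term as precisely $\mathcal{E}(M(\hat{P}_u^{(k+1,j)}),\Delta M^{(k+1,j)})$. Under the inductive hypothesis $\hat{P}_u^{(k+1,j)} \in \mathcal{B}_{\delta_0}(P_u^{(k+1,\ast)})$ together with $\|\Delta M^{(k+1,j)}\|_F \leq d$ (the threshold from Lemma~\ref{lemma2}), Lemma~\ref{lemma1} bounds the nominal part by $\sigma\|\hat{P}_u^{(k+1,j)}-P_u^{(k+1,\ast)}\|_F$ and Lemma~\ref{epsilon_ineq} bounds the perturbative part by $c_3\|\Delta M^{(k+1,j)}\|_F$, so the triangle inequality yields the key recursion
\bex
\|\hat{P}_u^{(k+1,j+1)}-P_u^{(k+1,\ast)}\|_F \leq \sigma\|\hat{P}_u^{(k+1,j)}-P_u^{(k+1,\ast)}\|_F + c_3\|\Delta M^{(k+1,j)}\|_F.
\eex

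Next I would define $\delta_1 := \min\{\delta_1^1,(1-\sigma)\delta_0/c_3\}$, chosen so that whenever $\hat{P}_u^{(k+1,0)} \in \mathcal{B}_{\delta_0}(P_u^{(k+1,\ast)})$ and $\|\Delta M\|_\infty < \delta_1$, the one-step bound $\sigma\delta_0 + c_3\delta_1 < \sigma\delta_0 + (1-\sigma)\delta_0 = \delta_0$ traps the trajectory in $\mathcal{B}_{\delta_0}(P_u^{(k+1,\ast)})$, closing the induction; assertion (i) then follows step by step from Lemma~\ref{lemma2}. Unrolling the recursion telescopes to
\bex
\|\hat{P}_u^{(k+1,j)}-P_u^{(k+1,\ast)}\|_F \leq \sigma^j\|\hat{P}_u^{(k+1,0)}-P_u^{(k+1,\ast)}\|_F + c_3\sum_{i=0}^{j-1}\sigma^{j-1-i}\|\Delta M^{(k+1,i)}\|_F,
\eex
and combining $\|\Delta M^{(k+1,i)}\|_F \leq \|\Delta M\|_\infty$ with $\sum_{i=0}^{j-1}\sigma^{j-1-i} \leq 1/(1-\sigma)$ yields the ISS estimate of (ii) with $\beta(y,j)=\sigma^j y$ and $\kappa(y)=c_3 y/(1-\sigma)$.

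For (iii), I would iterate the same recursion from an arbitrary starting index $j_0$ to obtain
\bex
\|\hat{P}_u^{(k+1,j)}-P_u^{(k+1,\ast)}\|_F \leq \sigma^{j-j_0}\|\hat{P}_u^{(k+1,j_0)}-P_u^{(k+1,\ast)}\|_F + \frac{c_3}{1-\sigma}\sup_{i \geq j_0}\|\Delta M^{(k+1,i)}\|_F.
\eex
For any $\eta > 0$, the hypothesis that $\|\Delta M^{(k+1,j)}\|_F \to 0$ lets me pick $j_0$ so that the supremum term is below $\eta/2$, after which the exponentially decaying term drops below $\eta/2$ for all sufficiently large $j$, proving $\hat{P}_u^{(k+1,j)} \to P_u^{(k+1,\ast)}$. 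The main obstacle is precisely the induction-closing step: the constants in Lemmas~\ref{lemma1} and \ref{epsilon_ineq} are only guaranteed on $\mathcal{B}_{\delta_0}(P_u^{(k+1,\ast)})$, so one must choose $\delta_1$ small enough (via $c_3\delta_1 \leq (1-\sigma)\delta_0$) to ensure the trajectory never escapes the ball where these bounds hold.
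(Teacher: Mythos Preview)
Your proposal is correct and follows essentially the same route as the paper: derive the one-step contraction-plus-disturbance inequality from Lemmas~\ref{lemma1} and \ref{epsilon_ineq}, close an induction to trap the iterates in $\mathcal{B}_{\delta_0}(P_u^{(k+1,\ast)})$, and unroll the recursion for the ISS bound. The only cosmetic difference is in how $\delta_1$ is selected: the paper first fixes $c_2=(1-\sigma)\delta_0$ in Lemma~\ref{epsilon_ineq} and takes $\delta_1$ to be the corresponding $\delta_1^1$ (which automatically gives $c_3\delta_1<c_2$), whereas you write $\delta_1=\min\{\delta_1^1,(1-\sigma)\delta_0/c_3\}$; these are equivalent once one notes that $\delta_1^1$ in Lemma~\ref{epsilon_ineq} is tied to a choice of $c_2$. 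Your explicit argument for (iii) via restarting the recursion at an arbitrary index $j_0$ is exactly the content of the reference the paper cites for that part.
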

\begin{proof}
Let $c_{2} = (1 - \sigma)\delta_{0}$ in Lemma \eqref{epsilon_ineq} and $\delta_{1}$ be equal to the $\delta_{1}^{1}$ associated with $c_{2}$.

For any $j \in \mathbb{Z}_{+}$ if $\hat{P}_u^{(k+1,j)}\in \mathcal{B}_{\delta_{0}}\left(P_u^{(k+1,\ast)}\right)$, then 

\begin{align}
\left\|\hat{P}_u^{(k+1,j+1)}-P_u^{(k+1,\ast)}\right\|_F \leq &\left\|\mathcal{E}\left(M\left(\hat{P}_u^{(k+1,j)}\right), \Delta M^{(k+1,j)}\right)\right\|_F + \bigg\|\mathcal{L}^{-1}_{\mathcal{A}\left(\hat{P}_{u}^{(k+1,j)}\right),\mathcal{C}\left(\hat{P}_{u}^{(k+1,j)}\right)}\bigg(-Q^{(k)} - \left[M\left(\hat{P}_u^{(k+1,j)}\right)\right]_{u x}^{\top} \notag\\ & \left[M\left(\hat{P}_u^{(k+1,j)}\right)\right]_{u u}^{-1}R\left[M\left(\hat{P}_u^{(k+1,j)}\right)\right]_{u u}^{-1}\left[M\left(\hat{P}_u^{(k+1,j)}\right)\right]_{u x} \bigg) - P_u^{(k+1,\ast)} \bigg\|_F \notag \\ \label{last3}  \leq & \sigma \left\|\hat{P}_u^{(k+1,j)}-P_u^{(k+1,\ast)}\right\|_F + c_{3}\left\|\Delta M^{(k+1,j)} \right\|_{F} \\ \label{last2} \leq & \sigma \left\|\hat{P}_u^{(k+1,j)}-P_u^{(k+1,\ast)}\right\|_F + c_{3}\left\|\Delta M \right\|_{\infty} \\ \label{last1}< & \sigma \delta_{0} + c_{3}\delta_{1} < \sigma \delta_{0} + c_{2} = \delta_{0},
\end{align}
where \eqref{last3} and \eqref{last1} are due to Lemmas \ref{lemma1} and \ref{epsilon_ineq}. By induction, (i) in Lemma \ref{main_lemma1} is proved.

The proof of (ii) in Lemma \ref{main_lemma1} comes directly from repeating \eqref{last2} as
\bex
\begin{aligned}
\left\|\hat{P}_u^{(k+1,j)}-P_u^{(k+1,\ast)}\right\|_F &\leq \sigma^2 \left\|\hat{P}_u^{(k+1,j-2)}-P_u^{(k+1,\ast)}\right\|_F + c_{3}(\sigma +1)\left\|\Delta M \right\|_{\infty} \leq \dots \\ & < \sigma^j \left\|\hat{P}_u^{(k+1,0)}-P_u^{(k+1,\ast)}\right\|_F + \frac{c_{3}}{1-\sigma}\left\|\Delta M \right\|_{\infty}.
\end{aligned}
\eex

Then, (iii) can be proved in a similar way to that of (iii) in \citep[Lemma 2]{pang2021robust}.

\end{proof}

In Lemma \ref{main_lemma1}, it is required that $\hat{P}_u^{(k+1,0)}\in \mathcal{B}_{\delta_{0}}\left(P_u^{(k+1,\ast)}\right)$; thus, by definition,  $\hat{L}_u^{(k+1,0)}$ must be in a neighborhood of $\hat{L}_u^{(k+1,\ast)}$. The following theorem removes this restriction, where $\hat{L}_u^{(k+1,0)}$ can be any stabilizing control gain.

\begin{theorem}
\label{thm_main}
For any given stabilizer $\left(\hat{L}_u^{(k+1,0)},\hat{L}_v^{(k)}\right)$ of system \eqref{sde} and any $\epsilon > 0$, there exists $\delta_{2}\left(\epsilon, \hat{L}_u^{(k+1,0)}\right) > 0$ such that if $Q^{(k)}>0$ and $\left\| \Delta M\right\|_{\infty} < \delta_{2}$:
\begin{enumerate}[(i)]
  \item $\left[\hat{M}^{(k+1,j)}\right]_{u u}$ is invertible and $\left(\hat{L}_u^{(k+1,j)},\hat{L}_v^{(k)}\right)$ is a stabilizer of system \eqref{sde},$\left\| \hat{P}_u^{(k+1,j)} \right\|_{F} < M_{0}$,${\forall} j \in \mathbb{Z}_{+}, j >0 $, where $M_{0}(\delta_{2})>0$;
  \item $\lim sup _{j \rightarrow \infty}\left\| \hat{P}_u^{(k+1,j)} - P_u^{(k+1,\ast)} \right\|_{F} < \epsilon$;
  \item $\lim _{j \rightarrow \infty}\left\| \Delta M_{j}\right\|_{\infty}= 0$  implies $\lim _{j \rightarrow \infty}\left\| \hat{P}_u^{(k+1,j)} - P_u^{(k+1,\ast)} \right\|_{F} = 0$.
\end{enumerate} 

\end{theorem}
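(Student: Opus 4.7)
The plan is to combine the monotone convergence of the noise-free inner loop (Theorem \ref{th2.3}) with a finite-horizon continuity estimate to reduce the problem to the local ISS result of Lemma \ref{main_lemma1}. The essential difficulty is that Lemma \ref{main_lemma1} already assumes $\hat{P}_u^{(k+1,0)} \in \mathcal{B}_{\delta_0}(P_u^{(k+1,\ast)})$, whereas here the user-supplied initial stabilizer may be arbitrarily far from the optimum.

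First, fix any $\sigma \in (0,1)$ and let $\delta_0$ be the radius provided by Lemma \ref{lemma1}. Run the \emph{noise-free} recursion \eqref{P_L-} starting from $P_u^{(k+1,0)} := \hat{P}_u^{(k+1,0)}$. Since $(\hat{L}_u^{(k+1,0)}, \hat{L}_v^{(k)})$ is a stabilizer and $Q^{(k)} > 0$, Theorem \ref{th2.3} yields a monotonically decreasing sequence $P_u^{(k+1,j)} \to P_u^{(k+1,\ast)}$; thus one can pick a finite $j^\ast \in \mathbb{Z}_+$ with $P_u^{(k+1,j^\ast)} \in \mathcal{B}_{\delta_0/2}(P_u^{(k+1,\ast)})$. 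Both $j^\ast$ and the trajectory $\{P_u^{(k+1,j)}\}_{j=0}^{j^\ast}$ depend only on $\hat{L}_u^{(k+1,0)}$.

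Next, I would establish finite-horizon continuous dependence on $\Delta M$. View the robust update \eqref{P_e} as $\hat{P}_u^{(k+1,j+1)} = \mathcal{I}(\hat{P}_u^{(k+1,j)}, \Delta M^{(k+1,j)})$. On a compact neighborhood of the noise-free trajectory where $[M(\cdot)]_{uu}$ is invertible and the closed-loop operator is Hurwitz, $\mathcal{I}$ is (uniformly) Lipschitz in both arguments; this is the same type of bound worked out inside the proof of Lemma \ref{epsilon_ineq} (applied now along a finite sequence of reference points rather than at $P_u^{(k+1,\ast)}$ alone). Propagating this estimate inductively over $j = 0,1,\dots,j^\ast$ and invoking Lemma \ref{lemma2} along the way shows that for any $r > 0$ there is $\delta_2'(r,\hat{L}_u^{(k+1,0)}) > 0$ such that $\|\Delta M\|_\infty < \delta_2'$ ensures (a) $[\hat{M}^{(k+1,j)}]_{uu}$ invertible and $(\hat{L}_u^{(k+1,j)}, \hat{L}_v^{(k)})$ a stabilizer for all $0 \le j \le j^\ast$, and (b) $\|\hat{P}_u^{(k+1,j)} - P_u^{(k+1,j)}\|_F < r$ on the same range.

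Finally, choose $r = \delta_0/2$ so that $\hat{P}_u^{(k+1,j^\ast)} \in \mathcal{B}_{\delta_0}(P_u^{(k+1,\ast)})$, and set $\delta_2 := \min\{\delta_1,\, \delta_2',\, (1-\sigma)\epsilon/c_3\}$, where $\delta_1, c_3$ come from Lemma \ref{main_lemma1}. Treating $\hat{P}_u^{(k+1,j^\ast)}$ as a new initial condition for the same recursion and applying Lemma \ref{main_lemma1} gives, for $j \ge j^\ast$,
\[
\bigl\|\hat{P}_u^{(k+1,j)} - P_u^{(k+1,\ast)}\bigr\|_F \le \sigma^{j-j^\ast}\bigl\|\hat{P}_u^{(k+1,j^\ast)} - P_u^{(k+1,\ast)}\bigr\|_F + \tfrac{c_3}{1-\sigma}\|\Delta M\|_\infty.
\]
Item (i) then follows by combining stage~1 (finite-horizon) and stage~2 (Lemma \ref{main_lemma1}(i)); $M_0$ is the max of $\|P_u^{(k+1,\ast)}\|_F + \delta_0$ and the finitely many stage-1 norms. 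Item (ii) is immediate from the choice $c_3 \delta_2/(1-\sigma) < \epsilon$. Item (iii) follows as in Lemma \ref{main_lemma1}(iii) by letting the perturbation vanish on tail windows. The main obstacle I expect is Step~2: carefully justifying that the perturbed iterate stays within a neighborhood on which $\mathcal{I}$ is Lipschitz, i.e., that neither the invertibility of $[\hat{M}^{(k+1,j)}]_{uu}$ nor the Hurwitz property of $\mathcal{P}(\mathcal{A}(\hat{P}_u^{(k+1,j)})) + \mathcal{Q}(\mathcal{C}(\hat{P}_u^{(k+1,j)}))$ is lost before the trajectory reaches the local ISS region of Lemma \ref{main_lemma1}.
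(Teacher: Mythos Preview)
Your proposal is correct and follows essentially the same route as the paper: the paper reduces Theorem~\ref{thm_main} to Lemma~\ref{main_lemma1} by first showing (Lemma~\ref{th_mainlemma}) that some iterate $\hat{P}_u^{(k+1,\bar{j})}$ enters $\mathcal{B}_{\delta_0}(P_u^{(k+1,\ast)})$ when $\|\Delta M\|_\infty$ is small, and then invokes the local ISS estimate from $\bar{j}$ onward with $\delta_2 < \min(\kappa^{-1}(\epsilon),\delta_1)$. The paper does not spell out the proof of Lemma~\ref{th_mainlemma}, deferring instead to \cite{pang2021robust,pang2022reinforcement}; your finite-horizon continuity argument (run the noise-free recursion to $\mathcal{B}_{\delta_0/2}$ in $j^\ast$ steps, then propagate Lipschitz bounds on $\mathcal{I}$ over those finitely many steps) is precisely the mechanism used in those references, and your identification of the main obstacle---preserving invertibility of $[\hat{M}^{(k+1,j)}]_{uu}$ and the Hurwitz property along the transient---is on point.
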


\begin{proof}

Note that all the conclusions of Theorem \ref{thm_main} can be implied by Lemma \ref{main_lemma1} if $\delta_{2} < min(\kappa^{-1}(\epsilon),\delta_{1})$, and $\hat{P}_u^{(k+1,1)}\in \mathcal{B}_{\delta_{0}}\left(P_u^{(k+1,\ast)}\right)$ for Algorithm \ref{algo3}. Thus, the proof of Theorem \ref{thm_main} reduces to the proof of the following Lemma \ref{th_mainlemma}.
\end{proof}

\begin{lemma}
\label{th_mainlemma}
Given a stabilizer $\left(\hat{L}_u^{(k+1,0)},\hat{L}_v^{(k)}\right)$, there exists a $0<\delta_{2}<min(\kappa^{-1}(\epsilon),\delta_{1})$ and a $\bar{j} \in \mathbb{Z}_{+}$ such that $\hat{P}_u^{(k+1,\bar{j})}\in \mathcal{B}_{\delta_{0}}\left(P_u^{(k+1,\ast)}\right)$, as long as $\left\| \Delta M\right\|_{\infty} < \delta_{2}$.
\end{lemma}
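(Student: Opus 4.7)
The plan is to combine the known unperturbed convergence of the iteration (Theorem \ref{th2.3}) with a finite-horizon continuous-dependence argument. Since Lemma \ref{main_lemma1} already delivers local ISS inside $\mathcal{B}_{\delta_{0}}(P_u^{(k+1,\ast)})$, all that is missing is to drive the perturbed trajectory into this ball in finitely many steps, starting from an arbitrary stabilizing gain.

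First I would look at the nominal (noise-free) iteration. Set $P_u^{(k+1,0)}:=\hat{P}_u^{(k+1,0)}$ and run Algorithm \ref{algo1} from the same initial pair $(\hat{L}_u^{(k+1,0)},\hat{L}_v^{(k)})$, producing a sequence $\{P_u^{(k+1,j)}\}_{j\ge 0}$. By Theorem \ref{th2.3}, $(L_u^{(k+1,j)},L_v^{(k)})$ is a stabilizer for every $j$ and $P_u^{(k+1,j)}\to P_u^{(k+1,\ast)}$. Hence one can fix $\bar{j}\in\mathbb{Z}_{+}$ large enough that $\|P_u^{(k+1,\bar{j})}-P_u^{(k+1,\ast)}\|_{F}<\delta_{0}/2$.

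Second, I would show by induction on $j\in\{0,1,\dots,\bar{j}\}$ that $\hat{P}_u^{(k+1,j)}$ can be made arbitrarily close to the nominal $P_u^{(k+1,j)}$ by shrinking $\|\Delta M\|_{\infty}$. The base case is immediate. For the inductive step, observe that once $(\hat{L}_u^{(k+1,j)},\hat{L}_v^{(k)})$ is stabilizing and $[\hat{M}^{(k+1,j+1)}]_{uu}$ is invertible, the map sending $(\hat{P}_u^{(k+1,j)},\Delta M^{(k+1,j+1)})$ to $\hat{P}_u^{(k+1,j+1)}$ is a continuous function of its arguments on the relevant domain: this is exactly the perturbation calculus behind Lemma \ref{lemma2} and the inequality \eqref{ineq_1} used in Lemma \ref{epsilon_ineq}. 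Because stabilizability of the closed loop and invertibility of the $(u,u)$-block are open conditions holding along the nominal trajectory, a small enough perturbation preserves them and simultaneously keeps the new iterate $\hat{P}_u^{(k+1,j+1)}$ in an arbitrary prescribed neighborhood of $P_u^{(k+1,j+1)}$. Since there are only $\bar{j}$ steps, taking the minimum over the finitely many step-wise tolerances yields a single $\delta_{2}'>0$ such that $\|\Delta M\|_{\infty}<\delta_{2}'$ implies $\|\hat{P}_u^{(k+1,j)}-P_u^{(k+1,j)}\|_{F}<\delta_{0}/2$ for all $j\le\bar{j}$.

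Finally, I set $\delta_{2}:=\min\{\delta_{2}',\kappa^{-1}(\epsilon),\delta_{1}\}$; the triangle inequality combined with Step~1 gives
\begin{equation*}
\bigl\|\hat{P}_u^{(k+1,\bar{j})}-P_u^{(k+1,\ast)}\bigr\|_{F}\le\bigl\|\hat{P}_u^{(k+1,\bar{j})}-P_u^{(k+1,\bar{j})}\bigr\|_{F}+\bigl\|P_u^{(k+1,\bar{j})}-P_u^{(k+1,\ast)}\bigr\|_{F}<\delta_{0},
\end{equation*}
which is exactly the conclusion of the lemma.

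The main obstacle I anticipate is rigorously executing Step~2 without a circular dependence between $\bar{j}$ and $\delta_{2}$. The stabilization property of $(\hat{L}_u^{(k+1,j)},\hat{L}_v^{(k)})$ and the invertibility of $[\hat{M}^{(k+1,j+1)}]_{uu}$ are needed to even define the next iterate, yet they themselves require a smallness hypothesis on $\|\Delta M^{(k+1,j+1)}\|_{F}$ that depends on how far $\hat{P}_u^{(k+1,j)}$ has drifted. The correct ordering is to choose $\bar{j}$ \emph{first} from the noise-free convergence, then to choose the tolerance $\delta_{2}$ as a minimum of finitely many open conditions along the fixed nominal path; this avoids any circularity and is essentially a straightforward but bookkeeping-heavy compactness argument.
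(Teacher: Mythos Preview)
Your proposal is correct and follows essentially the same strategy that the paper intends: the paper omits the proof of Lemma~\ref{th_mainlemma} and simply refers to \cite[Theorem 2]{pang2021robust} and \cite[Corollary 1]{pang2022reinforcement}, whose arguments are precisely the finite-horizon continuous-dependence argument you outline (fix $\bar{j}$ from the unperturbed convergence, then choose $\delta_2$ as a minimum over the finitely many step-wise tolerances). Two minor points: the iterate $\hat{P}_u^{(k+1,j+1)}$ depends on $\Delta M^{(k+1,j)}$ rather than $\Delta M^{(k+1,j+1)}$ (cf.\ \eqref{P_e}), and to get the strict inequality $\delta_2<\min(\kappa^{-1}(\epsilon),\delta_1)$ you should take, e.g., $\delta_2:=\min\{\delta_2',\tfrac{1}{2}\kappa^{-1}(\epsilon),\tfrac{1}{2}\delta_1\}$.
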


The proof is very close to that of \citep[Theorem 2]{pang2021robust} and \citep[Corollary 1]{pang2022reinforcement}, so it is omitted.

\subsection{Robustness analysis of outer loop}

According to \citep{dragan2011computation, sun2023reinforcement} and Algorithm \ref{algo1}, the iteration process of $P_v^{(k)}$ in the outer loop satisfies

\begin{flalign*}
  &\left(A + \gamma^{-2}B_{2}B_{2}^{\top}P_v^{(k)} \right)^{\top} P_v^{(k+1)}+P_v^{(k+1)}  \left(A + \gamma^{-2}B_{2}B_{2}^{\top}P^{(k)} \right)  +C^{\top} P_v^{(k+1)} C \notag\\ &
  -\left(P_v^{(k+1)} B_{1}+C^{\top} P_v^{(k+1)} D\right)\left(R+D^{\top} P_v^{(k+1)} D\right)^{-1} 
   \left(B_{1}^{\top} P_v^{(k+1)}+D^{\top} P_v^{(k+1)} C\right)  +Q - \gamma^{-2}P^{(k)}B_{2}B_{2}^{\top}P_v^{(k)}=0 .
\end{flalign*}
The above equation can be written as
\begin{flalign}
  &\left(A + B_{1}L_u^{(k+1)}+ B_{2}L_v^{(k)} \right)^{\top} P_v^{(k+1)}+P_v^{(k+1)}  \left(A + B_{1}L_u^{(k+1)}+ B_{2}L_v^{(k)} \right)  +\left(C + DL_u^{(k+1)}\right)^{\top} P_v^{(k+1)} \left(C + DL_u^{(k+1)}\right) \notag\\ &
  +L_u^{(k+1)^{\top}}RL_u^{(k+1)} + Q - \gamma^{2}L_v^{(k)^{\top}}L_v^{(k)} =0 .
  \label{lya_out}
\end{flalign}
where $L_u^{(k+1)} := - \left[M\left(P_v^{(k+1)}\right)\right]_{u u}^{-1}\left[M\left(P_v^{(k+1)}\right)\right]_{u x}$, and $L_v^{(k)} := - \left[M\left(P_v^{(k)}\right)\right]_{v v}^{-1}\left[M\left(P_v^{(k)}\right)\right]_{v x}$.

\begin{lemma}
\label{out_lem1}
For any $\delta_{3} > 0$, define $\mathcal{D}_{\delta_{3}}\left(P_v^{(\ast)}\right) = \Big\{L_{v}^{(k)} \in \mathbb{R}^{m\times n} | (L_{u},L_{v}^{(k)})$ is a stabilizer of system \eqref{sde}, and $Tr(P_v^{(\ast)} - P_v^{(k+1)}) \leq \delta_{3}\Big\}$. For any $L_{v}^{(k)} \in \mathcal{D}_{\delta_{3}}\left(P_v^{(\ast)}\right)$, there exists a $b(\delta_{3})>0$, such that $\left\|P_v^{(\ast)} - P_v^{(k+1)} \right\|_{F} \leq b(\delta_{3})\left\|\Xi^{(k+1)}\right\|_{F}$, where $\Xi^{(k+1)} = \gamma^{2}\left(L_v^{(k+1)}- L_v^{(k)}\right)^{\top}\left(L_v^{(k+1)}- L_v^{(k)}\right)$.
\end{lemma}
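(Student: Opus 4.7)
The plan is to interpret $\Xi^{(k+1)}$ as the stochastic GARE residual evaluated at $P_v^{(k+1)}$ and then convert this residual bound into a norm bound on $P_v^{(\ast)} - P_v^{(k+1)}$ by combining a local inverse function theorem estimate near $P_v^{(\ast)}$ with a compactness argument on the trace sublevel set embedded in $\mathcal{D}_{\delta_3}$.

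First I would show that $F(P_v^{(k+1)}) = \Xi^{(k+1)}$, where $F(P)$ denotes the left-hand side of the GARE \eqref{are_1}. Since the inner loop has converged, $L_u^{(k+1)} = -(R + D^{\top} P_v^{(k+1)} D)^{-1}(B_1^{\top} P_v^{(k+1)} + D^{\top} P_v^{(k+1)} C)$; substituting into \eqref{lya_out} and completing the square in the $u$-block removes all $L_u^{(k+1)}$-dependent terms and produces
\[
A^{\top} P_v^{(k+1)} + P_v^{(k+1)} A + C^{\top} P_v^{(k+1)} C + Q - \Psi_{k+1}^{\top} \Pi_{k+1}^{-1} \Psi_{k+1} = \gamma^{2} L_v^{(k)\top} L_v^{(k)} - L_v^{(k)\top} B_2^{\top} P_v^{(k+1)} - P_v^{(k+1)} B_2 L_v^{(k)},
\]
with $\Pi_{k+1} = R + D^{\top} P_v^{(k+1)} D$ and $\Psi_{k+1} = B_1^{\top} P_v^{(k+1)} + D^{\top} P_v^{(k+1)} C$. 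Adding $\gamma^{-2} P_v^{(k+1)} B_2 B_2^{\top} P_v^{(k+1)}$ to both sides and using $L_v^{(k+1)} = \gamma^{-2} B_2^{\top} P_v^{(k+1)}$ collapses the right-hand side into the perfect square $\gamma^{2}(L_v^{(k+1)} - L_v^{(k)})^{\top}(L_v^{(k+1)} - L_v^{(k)}) = \Xi^{(k+1)}$. In particular, $F(P_v^{(k+1)}) = 0$ forces $L_v^{(k+1)} = L_v^{(k)}$, so $(L_u^{(k+1)}, L_v^{(k+1)})$ remains a stabilizer by Theorem \ref{th2.3}; hence $P_v^{(k+1)}$ solves the GARE with stabilizing gains, and uniqueness of the stabilizing solution yields $P_v^{(k+1)} = P_v^{(\ast)}$.

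Second, I would use smoothness of $F$ on the open set $\Omega := \{P \in \mathbb{S}^{n} : R + D^{\top} P D > 0\}$ and invertibility of its Fr\'echet derivative at $P_v^{(\ast)}$. A direct computation yields $F'(P_v^{(\ast)})[\Delta] = \mathcal{L}_{A + B_1 L_u^{(\ast)} + B_2 L_v^{(\ast)},\, C + D L_u^{(\ast)}}(\Delta)$, which is invertible on $\mathbb{S}^{n}$ because $(L_u^{(\ast)}, L_v^{(\ast)})$ is a mean-square stabilizer of \eqref{sde}. The inverse function theorem therefore furnishes a ball $\mathcal{V} = \mathcal{B}_{r}(P_v^{(\ast)})$ and a constant $c > 0$ with $\|F(P)\|_{F} \geq c\,\|P - P_v^{(\ast)}\|_{F}$ for every $P \in \mathcal{V}$, giving the desired inequality on $\mathcal{V}$ with constant $1/c$.

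Finally I would extend the bound to all of $\mathcal{D}_{\delta_3}$ by compactness. For $L_v^{(k)} \in \mathcal{D}_{\delta_3}$, outer-loop monotonicity gives $P_v^{(\ast)} - P_v^{(k+1)} \succeq 0$, so $\|P_v^{(\ast)} - P_v^{(k+1)}\|_{F} \leq \mathrm{Tr}(P_v^{(\ast)} - P_v^{(k+1)}) \leq \delta_3$, placing $P_v^{(k+1)}$ inside the compact set $\mathcal{K}_{\delta_3} := \{P \in \mathbb{S}^{n} : 0 \preceq P_v^{(\ast)} - P,\ \mathrm{Tr}(P_v^{(\ast)} - P) \leq \delta_3\}$. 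On the compact annulus $\mathcal{K}_{\delta_3} \setminus \mathcal{V}$, the continuous function $P \mapsto \|F(P)\|_{F}$ attains a positive minimum $\mu$ along those $P$'s realized as $P_v^{(k+1)}$ for some $L_v^{(k)} \in \mathcal{D}_{\delta_3}$, and $\mu > 0$ because the first step excludes $F(P_v^{(k+1)}) = 0$ outside $\mathcal{V}$. Setting $b(\delta_3) := \max\{1/c,\ \delta_3/\mu\}$ then produces the uniform bound $\|P_v^{(\ast)} - P_v^{(k+1)}\|_{F} \leq b(\delta_3)\|\Xi^{(k+1)}\|_{F}$. The hardest part is the completion-of-squares bookkeeping in the first step together with the uniqueness argument excluding spurious zeros of $F$ on the compact annulus; the rest reduces to a routine application of the inverse function theorem and continuity on a compact set.
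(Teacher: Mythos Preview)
Your approach is correct in outline but takes a genuinely different route from the paper. Your key observation, that $\Xi^{(k+1)}$ is exactly the GARE residual $F(P_v^{(k+1)})$, is equivalent to what the paper establishes, but the paper exploits it differently: instead of invoking the inverse function theorem on the nonlinear map $F$, the paper subtracts the GARE~\eqref{are_1} (rewritten at the closed-loop data $A_{(\ast)},C_{(\ast)}$) from the outer-loop equation~\eqref{lya_out} and obtains the \emph{exact} linear identity
\[
\mathcal{K}\bigl(P_v^{(k+1)}\bigr)\,\mathrm{vec}\bigl(P_v^{(\ast)}-P_v^{(k+1)}\bigr) \;=\; -\,\mathrm{vec}\bigl(\Xi^{(k+1)}\bigr),
\]
where $\mathcal{K}(P_v^{(k+1)})$ is an explicit Sylvester-type operator that is Hurwitz at $P_v^{(\ast)}$. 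This yields the local bound via a minimum singular value rather than an abstract IFT constant, and it is elementary in that no derivative of $F$ needs to be computed. For the ``far'' regime the paper splits on $L_v^{(k)}$ rather than on $P_v^{(k+1)}$: when $L_v^{(k)}\notin\mathcal{B}_{\delta_4}(L_v^{(\ast)})$ it simply pairs the uniform lower bound $\|\Xi^{(k+1)}\|_F\ge b_1$ with the crude upper bound $\|P_v^{(\ast)}-P_v^{(k+1)}\|_F\le \delta_3+\mathrm{Tr}(P_v^{(\ast)})$ coming from the trace constraint, and divides. Your inverse-function-theorem route is slicker conceptually; the paper's explicit linear identity buys sharper constants and avoids differential-calculus machinery.

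One point in your compactness step deserves care. You take the minimum of $\|F(\cdot)\|_F$ over the set of $P$'s \emph{realized} as $P_v^{(k+1)}$ for some $L_v^{(k)}\in\mathcal{D}_{\delta_3}$, intersected with the annulus $\mathcal{K}_{\delta_3}\setminus\mathcal{V}$. You correctly note that $F$ may have non-stabilizing zeros inside $\mathcal{K}_{\delta_3}$, so you cannot simply minimize over the full annulus; but then you need the realized set to be compact (or at least closed in the annulus), which you do not establish. The cleanest fix is the paper's: bound $\|\Xi^{(k+1)}\|_F$ from below on the far set directly and combine with the uniform trace upper bound on $\|\tilde P_v^{(k+1)}\|_F$, sidestepping any delicate description of the image set.
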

\begin{proof}
Define $A_{(\ast)} = A + B_{1}L_u^{(\ast)} + B_{2}L_v^{(\ast)}$ and $C_{(\ast)} = C + DL_u^{(\ast)}$. Then \eqref{lya_out} becomes
\begin{flalign}
  & A_{(\ast)}^{\top} P_v^{(k+1)} + P_v^{(k+1)}A_{(\ast)} + C_{(\ast)}^{\top} P_v^{(k+1)}C_{(\ast)} - \gamma^{2}\left(L_v^{(\ast)} - L_v^{(k)}\right)^{\top}L_v^{(k+1)} - \gamma^{2} L_v^{(k+1)^{\top}}\left(L_v^{(\ast)} - L_v^{(k)}\right)- \gamma^{2}L_v^{(k)^{\top}}L_v^{(k)} \notag\\ & + \left(L_u^{(\ast)} - L_u^{(k+1)}\right)^{\top} \left(R+D^{\top}P_v^{(k+1)}D\right)L_u^{(k+1)} + L_u^{(k+1)^{\top}}\left(R+D^{\top}P_v^{(k+1)}D\right)L_u^{(\ast)} - L_u^{(\ast)^{\top}}DP_v^{(k+1)}DL_u^{(\ast)} + Q  = 0.
  \label{lya_out_re}
\end{flalign}

In addition, \eqref{are_1} can be rewritten as
\be
\label{are_1_re}
A_{(\ast)}^{\top} P_v^{(\ast)} + P_v^{(\ast)}A_{(\ast)} + C_{(\ast)}^{\top}P_v^{(\ast)}C_{(\ast)} + L_u^{(\ast)^{\top}}RL_u^{(\ast)} + Q - \gamma^{2} L_v^{(\ast)^{\top}}L_v^{(\ast)} = 0.
\ee

Subtracting \eqref{are_1_re} from \eqref{lya_out_re}, we have
\begin{flalign*}
&A_{(\ast)}^{\top} \left(P_v^{(\ast)}- P_v^{(k+1)}\right) +  \left(P_v^{(\ast)} - P_v^{(k+1)}\right)A_{(\ast)} + C_{(\ast)}^{\top} \left(P_v^{(\ast)} - P_v^{(k+1)}\right)C_{(\ast)} - \gamma^{2} \left(L_v^{(\ast)}- L_v^{(k+1)}\right)^{\top}\left(L_v^{(\ast)}- L_v^{(k+1)}\right) \notag \\ &+ \gamma^{2}\left(L_v^{(k+1)}- L_v^{(k)}\right)^{\top}\left(L_v^{(k+1)}- L_v^{(k)}\right) + \left(L_u^{(\ast)}- L_u^{(k+1)}\right)^{\top}\left(R+D^{\top}P_v^{(k+1)}D\right)\left(L_u^{(\ast)}- L_u^{(k+1)}\right) = 0.
\end{flalign*}

Let $\tilde{P}_{v}^{(k+1)} = P_v^{(\ast)} - P_v^{(k+1)}$. Then by \eqref{ineq_1}, the above equation can be rewritten as
\be
\label{lya_re2}
  A_{(\ast)}^{\top}\tilde{P}_{v}^{(k+1)} + \tilde{P}_{v}^{(k+1)}\left(A + B_{1}L_u^{(k+1)} + B_{2}L_v^{(k+1)}\right) + C_{(\ast)}^{\top} \tilde{P}_{v}^{(k+1)} \left(C + DL_u^{(k+1)}\right) = -\gamma^{2}\left(L_v^{(k+1)}- L_v^{(k)}\right)^{\top}\left(L_v^{(k+1)}- L_v^{(k)}\right).
\ee

Vectorizing both sides of \eqref{lya_re2} yields $\mathcal{K}\left(P_{v}^{(k+1)}\right) vec \left(\tilde{P}_{v}^{(k+1)}\right) = -vec\left(\Xi^{(k+1)}\right)$, where
\bex
\mathcal{K}\left(P_{v}^{(k+1)}\right) = I_{n} \otimes A_{(\ast)}^{\top} + \left(A + B_{1}L_u^{(k+1)} + B_{2}L_v^{(k+1)}\right)^{\top} \otimes I_{n} + C_{(\ast)}^{\top} \otimes \left(C+DL_u^{(k+1)}\right)^{\top}.
\eex

Since $I_{n} \otimes A_{(\ast)}^{\top} + A_{(\ast)}^{\top} \otimes I_{n} + C_{(\ast)}^{\top} \otimes  C_{(\ast)}^{\top}$ is Hurwitz, there exists a $\delta_{4}>0$ such that $\mathcal{K}\left(P_{v}^{(k+1)}\right)$ is invertible for all $L_{v}^{(k)} \in \mathcal{D}_{\delta_{3}}\left(P_v^{(\ast)}\right) \cap \mathcal{B}_{\delta_{4}}\left(L_{v}^{(\ast)}\right)$. Therefore, for any $L_{v}^{(k)} \in \mathcal{D}_{\delta_{3}}\left(P_v^{(\ast)}\right) \cap \mathcal{B}_{\delta_{4}}\left(L_{v}^{(\ast)}\right)$, it follows that
\bex
\left\|\tilde{P}_{v}^{(k+1)} \right\|_{F} \leq \underline{\sigma}^{-1}\left(\mathcal{K}\left(P_{v}^{(k+1)}\right)\right) \left\|\Xi^{(k+1)} \right\|_{F},
\eex
where $\underline{\sigma}(\cdot)$ is the minimum singular value of a matrix.

On the other hand, for any $L_{v}^{(k)} \in \mathcal{D}_{\delta_{3}}\left(P_v^{(\ast)}\right) \cap \mathcal{B}^{c}_{\delta_{4}}\left(L_{v}^{(\ast)}\right)$, where $\mathcal{B}^{c}_{\delta_{4}}$ is a complement of $\mathcal{B}_{\delta_{4}}$, $\Xi^{(k+1)} \neq 0$ and there exists a constant $b_{1} >0$ such that $\left\|\Xi^{(k+1)} \right\|_{F} \geq b_{1}$. Thus, we have 
\bex
\left\|\tilde{P}_{v}^{(k+1)} \right\|_{F} \leq Tr\left(P_v^{(\ast)}\right)  \leq \frac{\delta_{3}+ Tr\left(P_v^{(k+1)}\right)}{b_{1}}\left\|\Xi^{(k+1)} \right\|_{F}\leq \frac{\delta_{3}+ Tr\left(P_v^{(\ast)}\right)}{b_{1}}\left\|\Xi^{(k+1)} \right\|_{F}.
\eex

Suppose that $b_{2} = max_{L_{v}^{(k)}\in \mathcal{D}_{\delta_{3}}\left(P_v^{(\ast)}\right) \cap \mathcal{B}_{\delta_{4}}\left(L_{v}^{(\ast)}\right)}\underline{\sigma}^{-1}\left(\mathcal{K}\left(P_{v}^{(k+1)}\right)\right)$ and $b\left(\delta_{3}\right) = max\left\{b_{2},\frac{\delta_{3}+ Tr\left(P_v^{(\ast)}\right)}{b_{1}} \right\}$, then the proof is complete.
\end{proof}

\begin{lemma}
  \label{lem_Ha}
  Let $	\Psi = \left\{\Psi(s); s \geq 0\right\}$ be the solution to the matrix SDE
  \bex
  \left\{\begin{array}{ll}
    d \Psi(s)=A\Psi(s) d s+C\Psi(s) d W(s), & s \geq 0 \\
    \Psi(0)=I_{n}, 
  \end{array}\right.
  \eex
and system \eqref{sde} with $(u(\cdot),v(\cdot)) = (0,0)$ satisfies $\lim _{s \rightarrow \infty} \mathbb{E}\left[X(s)^{\top}X(s)\right]=0$. Define $H(\Psi(\cdot), \Lambda) = \mathbb{E}\int_{0}^{\infty}\Psi(s)^{\top}\Lambda \Psi(s) ds$, and $a(A) = log(5/4)/\|A\|_{2}$, where $\Lambda \in \mathbb{S}^{n}$ is a positive semi-definite matrix. Then,
\bex
\left\| H(\Psi(\cdot), \Lambda) \right\|_{2} \geq  \frac{1}{2}a(A) \|\Lambda\|_{2}.
\eex
\end{lemma}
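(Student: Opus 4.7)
The plan is to lower-bound $\|H(\Psi(\cdot),\Lambda)\|_2$ by evaluating the quadratic form $x_0^{\top} H(\Psi(\cdot),\Lambda) x_0$ at a well chosen unit vector $x_0$, and to lower-bound the integrand only on the short interval $[0,a(A)]$, where the deterministic component of $\Psi(s)$ is still close enough to $I_n$.

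First I would choose $x_0 \in \mathbb{R}^n$ to be a unit eigenvector of $\Lambda$ corresponding to its largest eigenvalue, so that $x_0^{\top}\Lambda x_0 = \|\Lambda\|_2$, and let $X(s) = \Psi(s)x_0$. Then $X(\cdot)$ is the strong solution of \eqref{sde} with $(u,v)=(0,0)$ and $X(0)=x_0$, and since $H(\Psi(\cdot),\Lambda)$ is symmetric positive semidefinite one has $\|H(\Psi(\cdot),\Lambda)\|_2 \ge x_0^{\top}H(\Psi(\cdot),\Lambda)x_0 = \mathbb{E}\int_0^{\infty} X(s)^{\top}\Lambda X(s)\,ds$. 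Discarding the tail of the integral beyond $a(A)$ preserves the inequality because the integrand is nonnegative.

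The key step is to bound the integrand on $[0,a(A)]$ from below by a constant multiple of $\|\Lambda\|_2$. Writing $X(s)=\mathbb{E}[X(s)] + (X(s)-\mathbb{E}[X(s)])$, the Itô isometry type splitting gives $\mathbb{E}[X(s)^{\top}\Lambda X(s)] \ge \mathbb{E}[X(s)]^{\top}\Lambda\,\mathbb{E}[X(s)]$ since $\Lambda \ge 0$. Taking expectations in the SDE for $\Psi$ shows that $\mathbb{E}[\Psi(s)] = e^{As}$, hence $\mathbb{E}[X(s)] = e^{As}x_0$. The elementary bound $\|e^{As}-I_n\|_2 \le e^{\|A\|_2 s}-1$ together with the choice $s \le a(A) = \log(5/4)/\|A\|_2$ yields $\|e^{As}x_0 - x_0\|_2 \le 1/4$. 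Writing $e^{As}x_0 = x_0 + y(s)$ with $\|y(s)\|_2 \le 1/4$ and using $\Lambda x_0 = \|\Lambda\|_2 x_0$ gives
\begin{equation*}
x_0^{\top}e^{A^{\top}s}\Lambda e^{As}x_0 = \|\Lambda\|_2 + 2x_0^{\top}\Lambda y(s) + y(s)^{\top}\Lambda y(s) \ge \|\Lambda\|_2 - 2\|\Lambda\|_2\cdot\tfrac{1}{4} = \tfrac{1}{2}\|\Lambda\|_2.
\end{equation*}

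Integrating this uniform lower bound over $[0,a(A)]$ gives $x_0^{\top}H(\Psi(\cdot),\Lambda)x_0 \ge \tfrac{1}{2}a(A)\|\Lambda\|_2$, and the lemma follows. The only mild subtlety is justifying $\mathbb{E}[\Psi(s)]=e^{As}$ and the interchange of expectation with integration, both of which follow from the mean-square stability assumption together with standard integrability of linear SDEs; this is routine and not the main difficulty. The heart of the argument is really the quantitative choice of the horizon $a(A)$ that keeps $e^{As}$ within a $1/4$-neighborhood of the identity, which is precisely what makes the linear term $2x_0^{\top}\Lambda y(s)$ dominated by $\|\Lambda\|_2$ and the final $1/2$ constant appear.
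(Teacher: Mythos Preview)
Your proposal is correct and follows essentially the same approach as the paper: choose a unit eigenvector of $\Lambda$, use $\mathbb{E}[X(s)^{\top}\Lambda X(s)]\ge \mathbb{E}[X(s)]^{\top}\Lambda\,\mathbb{E}[X(s)]$ (the paper calls this Jensen's inequality) together with $\mathbb{E}[\Psi(s)]=e^{As}$, restrict to $[0,a(A)]$, and exploit $\|e^{As}-I_n\|_2\le e^{\|A\|_2 s}-1\le 1/4$ on that interval. The only difference is cosmetic: the paper outsources the final pointwise bound $x_0^{\top}e^{A^{\top}s}\Lambda e^{As}x_0\ge \tfrac{1}{2}\|\Lambda\|_2$ to \cite[Lemma~7]{cui2023lyapunov}, whereas you spell it out directly via the decomposition $e^{As}x_0=x_0+y(s)$.
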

\begin{proof}
The Taylor expansion of $e^{As}$ is 
\bex
e^{As} = I_{n} + \sum_{k=1}^{\infty}\frac{(As)^{k}}{k!} = I_{n} + F(s).
\eex

Hence,
\be
\label{F_abs}
\left\| F(s) \right\| \leq  \sum_{k=1}^{\infty}\frac{(\|A\|s)^{k}}{k!} = e^{\|A\|s} -1.
\ee 

Choose a $x\in \mathbb{R}^{n}$ which satisfies $x^{\top} \Lambda x = \|\Lambda\| |x|^{2}$. Then, by Jensen's inequality and \eqref{F_abs} we get
\be 
\label{H_ineq}
\begin{aligned}
x^{\top} H x &\geq \int_{0}^{\infty} x^{\top} (\mathbb{E}\Psi(s))^{\top} \Lambda (\mathbb{E}\Psi(s)) x ds \\ & =
\int_{0}^{\infty} x^{\top} e^{A^{\top}s} \Lambda e^{As} x ds\\ & \geq \int_{0}^{a(A)} x^{\top} e^{A^{\top}s} \Lambda e^{As} x ds \geq \frac{1}{2}a(A) \|\Lambda\| |x|^{2},
\end{aligned}
\ee 
which the last inequality can be derived from \citep[Lemma 7]{cui2023lyapunov}.
\end{proof}

\begin{lemma}
\label{L_v_in_D}
For $\delta_{3} >0$ and $\hat{L}_v^{(k)} \in \mathcal{D}_{\delta_{3}}\left(P_v^{(\ast)}\right)$, if $\left\| \Delta L_v\right\|_{\infty} < f(\delta_{3})$, where $\Delta L_v^{(k)} = \hat{L}_v^{(k)}- L_v^{(k)}$, and $f(\delta_{3})$ is defined in \eqref{f_3def}, then $\hat{L}_v^{(k+1)} \in \mathcal{D}_{\delta_{3}}\left(P_v^{(\ast)}\right)$.
\end{lemma}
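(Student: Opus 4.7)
The plan is to verify the two defining properties of $\mathcal{D}_{\delta_{3}}(P_v^{(\ast)})$ at the perturbed iterate $\hat{L}_v^{(k+1)}$: namely, that $(L_u,\hat{L}_v^{(k+1)})$ is a stabilizer and that $\mathrm{Tr}(P_v^{(\ast)}-\hat{P}_v^{(k+2)})\le\delta_{3}$, where $\hat{P}_v^{(k+2)}$ denotes the outer-loop Lyapunov solution associated with $\hat{L}_v^{(k+1)}$. The key device is to introduce an auxiliary ``one step of the unperturbed outer loop applied to $\hat{L}_v^{(k)}$''. Writing $\bar{L}_v^{(k+1)}=\gamma^{-2}B_2^{\top}\bar{P}_v^{(k+1)}$, where $\bar{P}_v^{(k+1)}$ solves the exact Lyapunov equation \eqref{lya_out} driven by $\hat{L}_v^{(k)}$, one has by the monotonicity result in Theorem~2.4 that $(L_u,\bar{L}_v^{(k+1)})$ is a stabilizer, that $\bar{P}_v^{(k+1)}\le \bar{P}_v^{(k+2)}\le P_v^{(\ast)}$, and therefore $\mathrm{Tr}(P_v^{(\ast)}-\bar{P}_v^{(k+2)})\le \mathrm{Tr}(P_v^{(\ast)}-\bar{P}_v^{(k+1)})\le\delta_{3}$ by the assumption $\hat{L}_v^{(k)}\in\mathcal{D}_{\delta_{3}}(P_v^{(\ast)})$.

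Next I would quantify how far the perturbed quantities $\hat{L}_v^{(k+1)}$ and $\hat{P}_v^{(k+2)}$ deviate from their noise-free counterparts $\bar{L}_v^{(k+1)}$ and $\bar{P}_v^{(k+2)}$. Using the identity $\hat{L}_v^{(k+1)}-\bar{L}_v^{(k+1)}=\gamma^{-2}B_2^{\top}(\hat{P}_v^{(k+1)}-\bar{P}_v^{(k+1)})$ together with a vectorized subtraction of the two closed-loop Lyapunov equations (analogous to the derivation leading to \eqref{lya_re2} in the proof of Lemma~\ref{out_lem1}), I would invert the Lyapunov operator $\mathcal{P}(\cdot)+\mathcal{Q}(\cdot)$ to obtain estimates of the form
\begin{equation*}
\|\hat{P}_v^{(k+2)}-\bar{P}_v^{(k+2)}\|_F\le C_1(\delta_{3})\,\|\Delta L_v\|_\infty,\qquad \|\hat{L}_v^{(k+1)}-\bar{L}_v^{(k+1)}\|_F\le C_2(\delta_{3})\,\|\Delta L_v\|_\infty,
\end{equation*}
where the constants $C_1,C_2$ depend only on $\delta_{3}$ through uniform bounds valid on $\mathcal{D}_{\delta_{3}}(P_v^{(\ast)})$. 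Combining these yields $\mathrm{Tr}(P_v^{(\ast)}-\hat{P}_v^{(k+2)})\le \delta_{3}-\eta(\delta_{3})+n\,C_1(\delta_{3})\|\Delta L_v\|_\infty$ for a (possibly zero) strict gap $\eta(\delta_{3})\ge 0$ coming from the strict monotonicity of $\{\bar{P}_v^{(\cdot)}\}$, and choosing
\begin{equation*}
f(\delta_{3}):=\min\Bigl\{\tfrac{\eta(\delta_{3})}{nC_1(\delta_{3})},\ \tfrac{\mathrm{dist}(\bar{L}_v^{(k+1)},\partial\mathcal{S})}{C_2(\delta_{3})}\Bigr\}
\end{equation*}
(where $\mathcal{S}$ is the open set of stabilizing second-player gains) ensures that both $\hat{L}_v^{(k+1)}$ remains a stabilizer and the trace bound is preserved.

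The stabilizability half is handled by continuity: since $\mathcal{S}$ is open and $\bar{L}_v^{(k+1)}\in\mathcal{S}$, the bound $\|\hat{L}_v^{(k+1)}-\bar{L}_v^{(k+1)}\|_F\le C_2(\delta_{3})\|\Delta L_v\|_\infty$ keeps $\hat{L}_v^{(k+1)}$ inside $\mathcal{S}$ whenever $f(\delta_{3})$ is chosen as above. The main obstacle I expect is showing that the closed-loop Lyapunov operator driving $\hat{P}_v^{(k+2)}$ is uniformly invertible over the entire set $\mathcal{D}_{\delta_{3}}(P_v^{(\ast)})$, so that $C_1(\delta_{3})$ and $C_2(\delta_{3})$ can be made independent of $k$. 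This should follow from applying Lemma~\ref{lem_Ha} to the perturbed closed loop, obtaining a uniform lower bound on $\underline{\sigma}(\mathcal{K}(\hat{P}_v^{(k+1)}))$ analogous to the argument at the end of Lemma~\ref{out_lem1}, using the compactness of the sub-level set imposed by the trace constraint.
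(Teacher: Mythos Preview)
Your decomposition into an unperturbed auxiliary step $\bar L_v^{(k+1)}$ plus a perturbation bound is a natural idea, but it does not close. The crux is your sentence ``for a (possibly zero) strict gap $\eta(\delta_3)\ge 0$ coming from the strict monotonicity'': if $\eta(\delta_3)=0$ then your own formula $f(\delta_3)=\min\{\eta/(nC_1),\,\cdot\}$ gives $f(\delta_3)=0$ and the lemma is vacuous. Monotonicity by itself only yields $\mathrm{Tr}(P_v^{(\ast)}-\bar P_v^{(k+2)})\le \mathrm{Tr}(P_v^{(\ast)}-\bar P_v^{(k+1)})\le \delta_3$, with no uniform strict improvement: near $L_v^{(\ast)}$ the one-step gain $\mathrm{Tr}(\bar P_v^{(k+2)}-\bar P_v^{(k+1)})$ is arbitrarily small. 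To rescue your route you would have to prove that $\sup_{\mathcal D_{\delta_3}}\mathrm{Tr}(P_v^{(\ast)}-\bar P_v^{(k+2)})<\delta_3$ strictly, which needs a quantitative strict-monotonicity statement together with compactness of $\mathcal D_{\delta_3}$; you have not supplied either, and the latter is not obvious.

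The paper avoids this entirely by working directly with the \emph{perturbed} iterates and extracting a \emph{contraction}, not merely monotonicity. Subtracting the equations for $\hat P_v^{(k+1)}$ and $\hat P_v^{(k)}$ and invoking Lemma~\ref{lem_Ha} together with Lemma~\ref{out_lem1} gives
\[
\mathrm{Tr}\bigl(P_v^{(\ast)}-\hat P_v^{(k+1)}\bigr)\;\le\;\bigl(1-\underline f_1(\delta_3)\bigr)\,\mathrm{Tr}\bigl(P_v^{(\ast)}-\hat P_v^{(k)}\bigr)\;+\;\gamma^{2}\,\bar f_2(\delta_3)\,\|\Delta L_v^{(k)}\|_2^{2},
\]
with $\underline f_1(\delta_3)>0$ uniform on $\mathcal D_{\delta_3}$. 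The factor $1-\underline f_1<1$ is precisely the room that absorbs the disturbance: it forces $\mathrm{Tr}(P_v^{(\ast)}-\hat P_v^{(k+1)})<\delta_3$ once $\|\Delta L_v\|_\infty^2<\underline f_1\delta_3/(\gamma^2\bar f_2)$, yielding the specific $f(\delta_3)$ in \eqref{f_3def} that the lemma statement actually refers to. Note also that this contraction is what drives the subsequent ISS estimate in Theorem~\ref{main_thm_out}; your monotonicity-plus-Lipschitz bound would not produce that even if $\eta>0$ could be arranged.
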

\begin{proof}
  From Lemma \ref{L_v_in_D}, $\hat{L}_v^{(k)} \in \mathcal{D}_{\delta_{3}}\left(P_v^{(\ast)}\right)$ for any $k = 0,1,2,\cdots$.
  By Algorithm \ref{algo3}, $\hat{P}_{v}^{(k+1)}$ is the solution of
  \be
  \label{out_lyak+1}
  \begin{aligned}
  A_{(\ast)}^{(k+1)^{\top}}\hat{P}_{v}^{(k+1)} + \hat{P}_{v}^{(k+1)}A_{(\ast)}^{(k+1)} + C_{(\ast)}^{(k+1)^{\top}}\hat{P}_{v}^{(k+1)}C_{(\ast)}^{(k+1)} + L_u^{(k+1)^{\top}}RL_u^{(k+1)} + Q - \gamma^{2}\hat{L}_v^{(k)^{\top}}\hat{L}_v^{(k)} = 0,
  \end{aligned}
  \ee
  where $A_{(\ast)}^{(k+1)} = A + B_{1} L_u^{(k+1)} + B_2 \hat{L}_v^{(k)}$, $C_{(\ast)}^{(k+1)} = C + DL_u^{(k+1)}$
   and $L_u^{(k+1)} = - \left[M\left(\hat{P}_{v}^{(k+1)}\right)\right]_{u u}^{-1}\left[M\left(\hat{P}_{v}^{(k+1)}\right)\right]_{u x}$.
  
  For the former iteration, it can be written as
  \be
  \label{out_lyak}
  \begin{aligned}
    A_{(\ast)}^{(k)^{\top}}\hat{P}_{v}^{(k)} + \hat{P}_{v}^{(k)}A_{(\ast)}^{(k)} + C_{(\ast)}^{(k)^{\top}}\hat{P}_{v}^{(k)}C_{(\ast)}^{(k)} + L_u^{(k)^{\top}}RL_u^{(k)} + Q - \gamma^{2}\hat{L}_v^{(k-1)^{\top}}\hat{L}_v^{(k-1)} = 0,
  \end{aligned}
  \ee

  Subtracting \eqref{out_lyak} from \eqref{out_lyak+1} yields
  \bex
  \begin{aligned}
  & A_{(\ast)}^{(k+1)^{\top}}\left(\hat{P}_{v}^{(k+1)} - \hat{P}_{v}^{(k)}\right) + \left(\hat{P}_{v}^{(k+1)} - \hat{P}_{v}^{(k)}\right)A_{(\ast)}^{(k+1)} + C_{(\ast)}^{(k+1)^{\top}}\left(\hat{P}_{v}^{(k+1)} - \hat{P}_{v}^{(k)}\right)C_{(\ast)}^{(k+1)}  + \left(L_u^{(k+1)} - L_u^{(k)}\right)^{\top} \left(R+D^{\top}\hat{P}_{v}^{(k)}D\right)\\ & \times \left(L_u^{(k+1)} - L_u^{(k)}\right) + \gamma^{2}\left(L_v^{(k)} -\hat{L}_v^{(k-1)} \right)^{\top}\left(L_v^{(k)} -\hat{L}_v^{(k-1)} \right) - \gamma^{2}\left(\hat{L}_v^{(k)}- L_v^{(k)}\right)^{\top}\left(\hat{L}_v^{(k)}- L_v^{(k)}\right) = 0,
  \end{aligned}
  \eex
  where $L_v^{(k)} = - \left[M\left(\hat{P}_{v}^{(k)}\right)\right]_{v v}^{-1}\left[M\left(\hat{P}_{v}^{(k)}\right)\right]_{v x}$.
  
  Let $	\Psi^{k+1} = \left\{\Psi^{k+1}(s); s \geq 0\right\}$ be the solution to the matrix SDE
  \bex
  \left\{\begin{array}{ll}
    d \Psi^{k+1}(s)=A_{(\ast)}^{(k+1)}\Psi^{k+1}(s) d s+C_{(\ast)}^{(k+1)}\Psi^{k+1}(s) d W(s), & s \geq 0 \\
    \Psi^{k+1}(0)=I_{n}.
  \end{array}\right.
  \eex
  
  Since $I_{n} \otimes A_{(\ast)}^{(k+1)^{\top}} +A_{(\ast)}^{(k+1)^{\top}} \otimes I_{n} + C_{(\ast)}^{(k+1)^{\top}} \otimes C_{(\ast)}^{(k+1)^{\top}}$ is Hurwitz, it follows from \citep[Theorem 3.2.3]{sun2020stochastic} that $\hat{P}_{v}^{(k+1)} - \hat{P}_{v}^{(k)}$ satisfies the inequality
  \be
  \label{ineq_pvk}
  \begin{aligned}
  \hat{P}_{v}^{(k+1)} - \hat{P}_{v}^{(k)} \geq & \mathbb{E} \int_{0}^{\infty} \Psi^{k+1}(s)^{\top} \left(\gamma^{2}\left(L_v^{(k)} -\hat{L}_v^{(k-1)} \right)^{\top}\left(L_v^{(k)} -\hat{L}_v^{(k-1)} \right)\right)\Psi^{k+1}(s) ds \\ &- \mathbb{E} \int_{0}^{\infty} \Psi^{k+1}(s)^{\top} \left(\gamma^{2}\left(\hat{L}_v^{(k)}- L_v^{(k)}\right)^{\top}\left(\hat{L}_v^{(k)}- L_v^{(k)}\right)\right)\Psi^{k+1}(s) ds
  \end{aligned}
  \ee

  By Lemma \ref{lem_Ha} we have
  \be
  \label{ineq_psik}
  \begin{aligned}
  & \left\| \mathbb{E} \int_{0}^{\infty} \Psi^{k+1}(s)^{\top} \left(\gamma^{2}\left(L_v^{(k)} -\hat{L}_v^{(k-1)} \right)^{\top}\left(L_v^{(k)} -\hat{L}_v^{(k-1)} \right)\right)\Psi^{k+1}(s) ds\right\| \\ \geq &\frac{log(5/4)}{2\left\| A_{(\ast)}^{(k+1)}\right\|} \left\| \gamma^{2}\left(L_v^{(k)} -\hat{L}_v^{(k-1)} \right)^{\top}\left(L_v^{(k)} -\hat{L}_v^{(k-1)} \right)\right\| .
  \end{aligned}
  \ee

  Taking the trace on both sides of the inequality \eqref{ineq_pvk}, by \eqref{ineq_psik}, Lemma \ref{out_lem1} and \citep[Theorem 1]{mori1988comments}  we obtain
  \bex
  \begin{aligned}
  Tr\left(\hat{P}_{v}^{(k+1)} - \hat{P}_{v}^{(k)}\right) \geq &\frac{log(5/4)}{2n\sqrt{n}b(\delta_3)\left\| A_{(\ast)}^{(k+1)}\right\|}Tr\left(P_v^{(\ast)}  - \hat{P}_v^{(k)} \right) \\ &- Tr\left(\mathbb{E} \int_{0}^{\infty} \Psi^{k+1}(s)^{\top} \Psi^{k+1}(s) ds\right) \left\|\gamma^{2}\left(\hat{L}_v^{(k)}- L_v^{(k)}\right)^{\top}\left(\hat{L}_v^{(k)}- L_v^{(k)}\right)\right\|_{2}.
  \end{aligned}
  \eex

  Then, we have
\be
\label{Trast}
\begin{aligned}
Tr\left(P_v^{(\ast)} - \hat{P}_v^{(k+1)}\right)  =& Tr\left(P_v^{(\ast)} - \hat{P}_v^{(k)}\right) - Tr\left(\hat{P}_v^{(k+1)} - \hat{P}_v^{(k)}\right) \\  \leq & \left(1-  \frac{log(5/4)}{2n\sqrt{n} b(\delta_3)\left\| A_{(\ast)}^{(k+1)}\right\|}\right)Tr\left(P_v^{(\ast)} - \hat{P}_v^{(k)}\right)  \\ &+ Tr\left(\mathbb{E} \int_{0}^{\infty} \Psi^{k+1}(s)^{\top} \Psi^{k+1}(s) ds\right) \left\|\gamma^{2}\left(\hat{L}_v^{(k)}- L_v^{(k)}\right)^{\top}\left(\hat{L}_v^{(k)}- L_v^{(k)}\right)\right\|_{2}
\end{aligned}
\ee

Let 
\bex 
f_{1}(\hat{L}_v^{(k)}) = \frac{log(5/4)}{2n\sqrt{n} b(\delta_3)\left\| A_{(\ast)}^{(k+1)}\right\|},\quad f_{2}\left(\hat{L}_v^{(k)}\right) = Tr\left(\mathbb{E} \int_{0}^{\infty} \Psi^{k+1}(s)^{\top} \Psi^{k+1}(s) ds\right).
\eex

Since $f_{1}\left(\hat{L}_v^{(k)}\right)$ and $f_{2}\left(\hat{L}_v^{(k)}\right)$ are continuous with respect to $\hat{L}_v^{(k)}$,
\bex 
\underline{f}_{1}(\delta_{3}) = \inf\limits_{\hat{L}_v^{(k)} \in \mathcal{D}_{\delta_{3}}\left(P_v^{(\ast)}\right) } f_{1}\left(\hat{L}_v^{(k)}\right) >0, \quad \bar{f}_{2}(\delta_{3})=\sup\limits_{\hat{L}_v^{(k)} \in \mathcal{D}_{\delta_{3}}\left(P_v^{(\ast)}\right) }f_{2}\left(\hat{L}_v^{(k)}\right) < \infty.
\eex 

It follows from \eqref{Trast} that if 
$
\left\|\left(\hat{L}_v^{(k)}- L_v^{(k)}\right)\right\|_{2} < \sqrt{\frac{\underline{f}_{1}(\delta_{3})\delta_{3}}{\gamma^{2}\bar{f}_{2}(\delta_{3})}},
$
we have 
$
Tr\left(P_v^{(\ast)} - \hat{P}_v^{(k+1)}\right) < \delta_{3}.
$

There exists $f_{3}(\delta_{3})>0$ such that when $\left\|\left(\hat{L}_v^{(k)}- L_v^{(k)}\right)\right\|_{2} \leq f_{3}(\delta_{3})$, $(L_{u},\hat{L}_{v}^{(k+1)})$ is a stabilizer of system \eqref{sde}. Therefore, if 
\be 
\label{f_3def}
\left\|\left(\hat{L}_v- L_v\right)\right\|_{\infty} < \min \left\{f_{3}(\delta_{3}),\sqrt{\frac{\underline{f}_{1}(\delta_{3})\delta_{3}}{\gamma^{2}\bar{f}_{2}(\delta_{3})}}\right\} = : f(\delta_{3}),
\ee
we have $\hat{L}_v^{(k+1)} \in \mathcal{D}_{\delta_{3}}\left(P_v^{(\ast)}\right)$.

\end{proof}

\begin{theorem}
\label{main_thm_out}
For $\delta_{3} >0$ and $\hat{L}_v^{(0)} \in \mathcal{D}_{\delta_{3}}\left(P_v^{(\ast)}\right)$, if $\left\| \Delta L_v\right\|_{\infty} < f(\delta_{3})$ , then the following small-disturbance ISS holds: 
\be
\left\|P_v^{(\ast)} - \hat{P}_v^{(k)} \right\|_{F} \leq \beta_{1}\left(\left\|P_v^{(\ast)} - \hat{P}_v^{(0)}\right\|_{F}, k\right) + \kappa_{1} \left(\left\| \Delta L_v\right\|_{\infty} \right),
\ee
where $\beta_{1}(\cdot,\cdot)$ is a class $\mathcal{KL}$ function, and $\kappa_{1}(\cdot)$ is a class $\mathcal{K} function$ .
\end{theorem}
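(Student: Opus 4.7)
The plan is to combine the forward invariance established in Lemma~\ref{L_v_in_D} with the scalar recursion derived inside its proof, iterate it geometrically, and then convert the resulting trace bound into the Frobenius-norm ISS estimate claimed in the theorem.

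First, I would show that the set $\mathcal{D}_{\delta_3}(P_v^{(\ast)})$ is forward invariant for the outer loop. From the hypothesis $\hat{L}_v^{(0)} \in \mathcal{D}_{\delta_3}(P_v^{(\ast)})$ together with $\|\Delta L_v\|_\infty < f(\delta_3)$, Lemma~\ref{L_v_in_D} gives $\hat{L}_v^{(1)} \in \mathcal{D}_{\delta_3}(P_v^{(\ast)})$, and an easy induction propagates this to every $k \in \mathbb{Z}_+$. In particular $(L_u,\hat{L}_v^{(k)})$ is a stabilizer of \eqref{sde} for all $k$, and the continuous quantities $f_1(\hat{L}_v^{(k)})$ and $f_2(\hat{L}_v^{(k)})$ introduced in the proof of Lemma~\ref{L_v_in_D} admit the uniform bounds $\underline{f}_1(\delta_3) > 0$ and $\bar{f}_2(\delta_3) < \infty$ along the whole trajectory.

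Next, writing $a_k := Tr(P_v^{(\ast)} - \hat{P}_v^{(k)})$, inequality \eqref{Trast} combined with these uniform bounds reduces to the scalar linear recursion
\begin{equation*}
a_{k+1} \leq \bigl(1 - \underline{f}_1(\delta_3)\bigr)\, a_k + \gamma^2 \bar{f}_2(\delta_3)\,\|\Delta L_v\|_\infty^2 .
\end{equation*}
Unrolling this recursion and summing the resulting geometric series yields
\begin{equation*}
a_k \leq \bigl(1 - \underline{f}_1(\delta_3)\bigr)^k a_0 + \frac{\gamma^2 \bar{f}_2(\delta_3)}{\underline{f}_1(\delta_3)}\,\|\Delta L_v\|_\infty^2 ,
\end{equation*}
valid for all $k$, provided $\underline{f}_1(\delta_3) \in (0,1)$, which can be guaranteed by choosing $\delta_3$ small enough. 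The first term then contracts geometrically in $k$ and the second is a fixed function of the disturbance only. To finish, I would translate this trace estimate into a Frobenius-norm one. Because the unperturbed outer loop is monotone (Theorem~2.4(ii)) and the invariance in Lemma~\ref{L_v_in_D} keeps $\hat{P}_v^{(k)}$ close to $P_v^{(\ast)}$, the matrix $P_v^{(\ast)} - \hat{P}_v^{(k)}$ is positive semidefinite, so the elementary bounds $\|X\|_F \leq Tr(X) \leq \sqrt{n}\,\|X\|_F$ (valid for any positive semidefinite $X \in \mathbb{S}^n$) convert $a_k$ into a bound on $\|P_v^{(\ast)} - \hat{P}_v^{(k)}\|_F$. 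Setting
\begin{equation*}
\beta_1(y,k) := \sqrt{n}\,\bigl(1-\underline{f}_1(\delta_3)\bigr)^k\, y, \qquad \kappa_1(y) := \frac{\gamma^2 \bar{f}_2(\delta_3)}{\underline{f}_1(\delta_3)}\, y^2 ,
\end{equation*}
produces a class $\mathcal{KL}$ function $\beta_1$ and a class $\mathcal{K}$ function $\kappa_1$ realizing the asserted ISS bound.

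The main obstacle I anticipate is the monotonicity $\hat{P}_v^{(k)} \leq P_v^{(\ast)}$ under perturbation, since Theorem~2.4(ii) is proved only for the exact Algorithm~\ref{algo1}. For Algorithm~\ref{algo3} I would need either a continuity/robustness argument that exploits the smallness of $\Delta M^{(k+1,j+1)}$ (via the inner-loop result Theorem~\ref{thm_main}) to propagate the ordering, or, if that proves delicate, an alternative route that directly inverts the Lyapunov operator $\mathcal{L}_{A_{(\ast)},C_{(\ast)}}$ appearing in \eqref{lya_re2} and controls $\|P_v^{(\ast)} - \hat{P}_v^{(k+1)}\|_F$ without signing the difference, accepting a slightly worse multiplicative constant in the definition of $\beta_1$.
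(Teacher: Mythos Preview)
Your proposal follows essentially the same route as the paper: invoke Lemma~\ref{L_v_in_D} inductively to get forward invariance of $\mathcal{D}_{\delta_3}(P_v^{(\ast)})$, extract from \eqref{Trast} the scalar recursion $a_{k+1}\le(1-\underline{f}_1(\delta_3))a_k+\gamma^2\bar f_2(\delta_3)\|\Delta L_v\|_\infty^2$, and iterate it as in part~(ii) of Lemma~\ref{main_lemma1}. The paper's proof stops there and does not spell out the trace-to-Frobenius conversion or the positive semidefiniteness of $P_v^{(\ast)}-\hat P_v^{(k)}$ that you flag; your concern is legitimate, but it is a gap shared with the paper rather than a divergence from its argument.
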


\begin{proof}

It follows from Lemma \ref{L_v_in_D} that $\hat{L}_v^{(k)} \in \mathcal{D}_{\delta_{3}}\left(P_v^{(\ast)}\right)$ for any $k = 0,1,2,\cdots$. From \eqref{Trast}, we have
\be 
\label{Trast_2}
\begin{aligned}
Tr\left(P_v^{(\ast)} - \hat{P}_v^{(k+1)}\right)  \leq & \left(1-  \underline{f}_{1}(\delta_{3})\right)Tr\left(P_v^{(\ast)} - \hat{P}_v^{(k)}\right) + \bar{f}_{2}(\delta_{3})\left\|\gamma^{2}\left(\hat{L}_v^{(k)}- L_v^{(k)}\right)^{\top}\left(\hat{L}_v^{(k)}- L_v^{(k)}\right)\right\|_{2}.
\end{aligned}
\ee

Finally, similar to (ii) in Lemma \ref{main_lemma1}, the proof of the theorem can be completed by repeating the above inequality.
\end{proof}

\begin{remark}
  In practical implementation, the computations in Algorithm \ref{algo2} need to be approximated by collecting state data at discrete time points and using methods such as the EM algorithm. Additionally, the conditional expectation must be approximated using Monte Carlo methods, which introduces error in the iterative process. According to \cite{hutzenthaler2013divergence} and \cite{cao2024analyzing}, it is known that the approximation error can be reduced by decreasing the discretization interval and increasing the number of samples. Therefore, $\left\| \Delta M\right\|_{\infty} < \delta_{2}$ in Theorem \ref{thm_main} and $\left\| \Delta L_v\right\|_{\infty} < f(\delta_{3})$ in Theorem \ref{main_thm_out} can be satisfied.
\end{remark}

\section{Numerical Experiments}

In this section, we will verify the effectiveness of Algorithm \ref{algo2}.

\textbf{Example 1.} (2-dimensional case)  Let the dimension $n=2$, $m_1 = m_2 =1$ and set
\bex
\begin{aligned}
  & A = \begin{bmatrix} -1.2115&0.7141\\0.8597&-1.0757 \end{bmatrix},\quad B_{1}= \begin{bmatrix} 0.6052\\0.6433 \end{bmatrix},\quad B_{2}=  \begin{bmatrix} 0.3281\\0.8746 \end{bmatrix},\\
  & C = \begin{bmatrix} 0.0743&0.0545\\0.0935&0.0397 \end{bmatrix},\quad D=\begin{bmatrix} 0.0774\\0.0118 \end{bmatrix}.
\end{aligned}
\eex

The coefficients in the quadratic performance index are
\bex
\begin{aligned}
Q = \begin{bmatrix} 0.2&0\\0&0.2 \end{bmatrix},\quad R=0.24 ,\quad \gamma=0.5.
\end{aligned}
\eex

Now, we will solve this problem using the presented  model-free algorithm. The initial state is selected as $x = [2,3]^{\top}$. Since the dimension of the parameter to be estimated in \eqref{ls_4} is $N = \frac{n(n+1)}{2} + m_{1}n + m_{2}n + \frac{m(m+1)}{2} = 8$, we divided the time interval $[0, 2]$ into $N = 8$ equal parts. Then, to ensure that the full column rank condition in Lemma \eqref{lem_rank} is satisfied, we make the exploration noises $e_{u} = 0.1sin(10t) + \sqrt{\lambda_{1}R^{-1}} \xi$, $e_{v} = 0.1cos(10t) + \sqrt{\lambda_{2}\gamma^{-2}} \xi$ in Algorithm \ref{algo2}, where $\lambda_{1}=\lambda_{2} =0.05$, and $\xi$ is a standard normal random vector, to ensure sufficient exploration \citep{bian2016adaptive, wang2020reinforcement, chen2023online}. The values of stop criteria $\epsilon_{1}$ and $\epsilon$ in Algorithm \ref{algo2} are set to $10^{-5}$ and $10^{-5}$.

By implementing Algorithm \ref{algo2}, Figure \ref{fig1} shows the parameter iterations of the value function in the learning process. It is evident that the inner loop (steps 5-9) of the Algorithm \ref{algo2} is convergent after a finite number of steps. The convergence results for $\hat{P}_v^{(k+1,j)}, k=0, 1, \dots$ are given by matrices
\bex
\begin{aligned}
  \hat{P}_u^{(1, \ast)}= \begin{bmatrix} 0.1034&0.0585\\0.0585&0.1058 \end{bmatrix},\quad \hat{P}_u^{(2, \ast)}=\begin{bmatrix} 0.1363&0.0977\\0.0977&0.1465 \end{bmatrix},
\end{aligned}
\eex
\bex
\begin{aligned}
  \hat{P}_u^{(3, \ast)}=\begin{bmatrix} 0.1454&0.1077\\0.1077&0.1559 \end{bmatrix}, \quad \hat{P}_v^{(4, \ast)}=\begin{bmatrix} 0.1460&0.1084\\0.1084&0.1565 \end{bmatrix}.
\end{aligned}
\eex

Then, since the convergence error $\Vert \hat{L}_{v}^{(6)} -\hat{L}_{v}^{(5)}\Vert \leq \epsilon$, we obtain
\bex
\begin{aligned}
  \hat{L}_{u}^{(\ast)} = \hat{L}_{u}^{(6,\ast)}= \begin{bmatrix} -0.7082&-0.6618 \end{bmatrix}, \quad \hat{L}_{v}^{(\ast)}= \hat{L}_{v}^{(6)}=\begin{bmatrix} 0.6021&0.6673 \end{bmatrix}, 
\end{aligned}
\eex
and
\bex
\begin{aligned}
  \hat{P}^{(\ast)}=\hat{P}_u^{(6, \ast)} = \begin{bmatrix} 0.1460&0.1084\\0.1084&0.1565 \end{bmatrix}.
\end{aligned}
\eex

For comparison, we take the result 
\bex
\begin{aligned}
  P=\begin{bmatrix} 0.1473&	0.1041\\
    0.1041&	0.1661 \end{bmatrix}
\end{aligned}
\eex
obtained from the model-based PI Algorithm \ref{algo1} as the true value. The error between the converged value $ \hat{P}^{(\ast)}$ from Algorithm \ref{algo2} and the true value $P$ is $\Vert \hat{P}^{(\ast)} - P\Vert = 0.0114$, showing a good convergence effect. 

Finally, to demonstrate that the feedback control pair $(u,v)= \left(\hat{L}_{u}^{(\ast)}X,\hat{L}_{v}^{(\ast)}X \right)$ is stabilizing, we present the state trajectory of the system under this input control in Figure \ref{ex1-xstable}.

\begin{figure}[!htp]
  \centering 
  \includegraphics[width=0.8\textwidth]{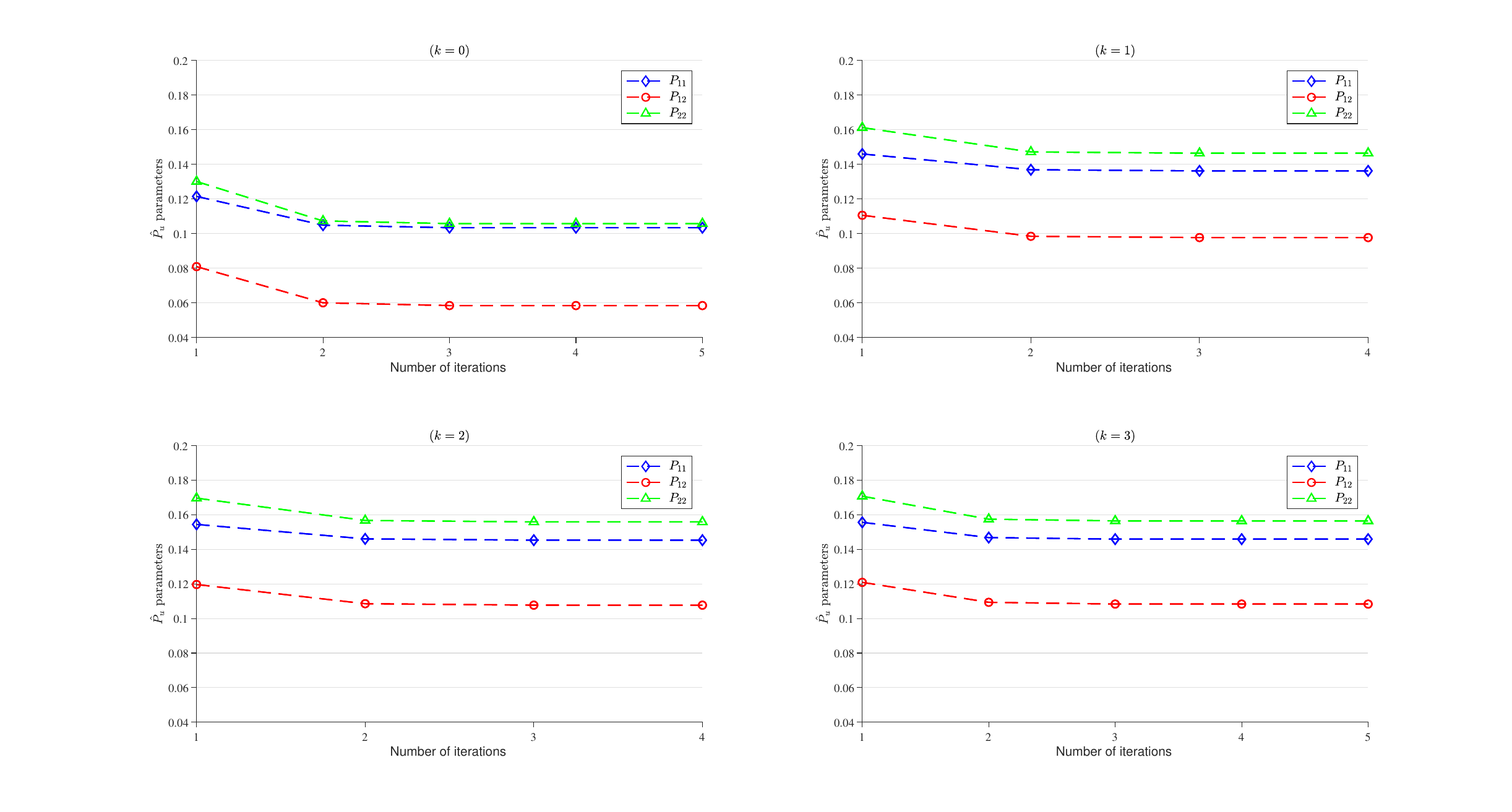}
  \caption{Convergence of the $\hat{P}_u$ parameters of the inner loop $\left(k=0,1,2,3,4\right)$.}
  \label{fig1} 
\end{figure}
\begin{figure}[!htp]
  \centering 
  \includegraphics[width=0.8\textwidth]{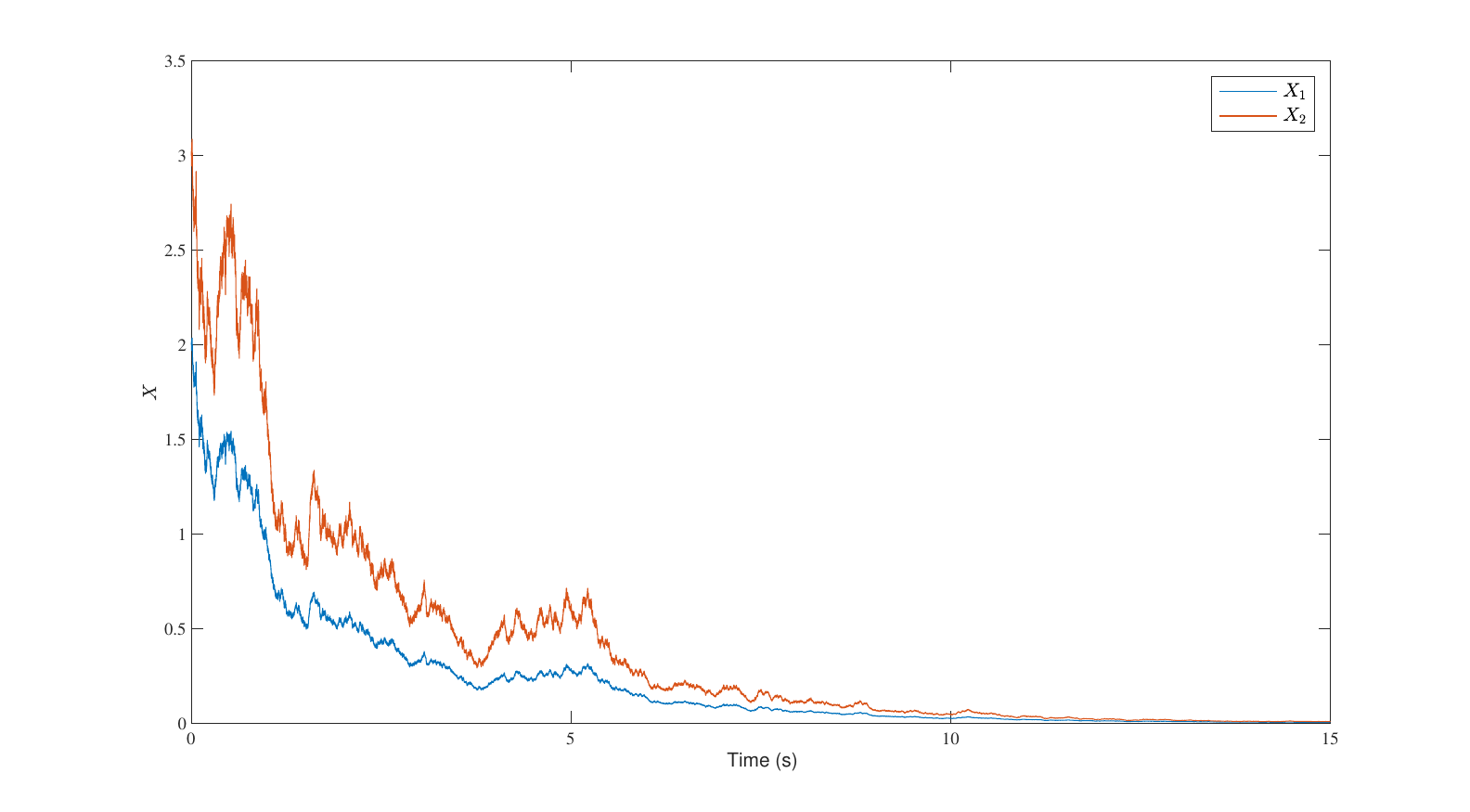}
  \caption{The state trajectory of system \eqref{sde} under the input control pair $(u,v)= \left(\hat{L}_{u}^{(\ast)}X,\hat{L}_{v}^{(\ast)}X \right)$.}
  \label{ex1-xstable} 
\end{figure}

\textbf{Example 2.} (4-dimensional case) We consider a linear two-mass spring system that has uncertain stiffness \citep{feng2010iterative} and suppose that a multiplicative noise perturbs it. The model of the stochastic spring system can be described by system \eqref{sde} with matrices
\bex
\begin{aligned}
  & A = \begin{bmatrix} 0&0&1&0\\0&0&0&1\\-1.25&1.25&0&0\\1.25&-1.25&0&0\end{bmatrix},\quad B_{1}= \begin{bmatrix} 0\\0\\0\\1 \end{bmatrix},\quad B_{2}=  \begin{bmatrix} 0\\0\\1\\0 \end{bmatrix},\\
  & C = \begin{bmatrix} 0&0&0&0\\0&0&0&0\\-0.25&0.25&0&0\\0.25&-0.25&0&0 \end{bmatrix},\quad D=\begin{bmatrix} 0\\0\\0\\0.2 \end{bmatrix}.
\end{aligned}
\eex
Then the presented RL method is applied to solve the stochastic LQZSG problem with weighting matrices
\bex
\begin{aligned}
Q = \begin{bmatrix} 1&0&0&0\\0&1&0&0\\0&0&0&0\\0&0&0&0 \end{bmatrix},\quad R = 1,\quad \gamma = 2.
\end{aligned}
\eex

By implementing Algorithm \ref{algo2}, it is shown that the inner loop in Algorithm \ref{algo2} satisfies the stopping criterion $\Vert \hat{P}_u^{(k+1,j)} - \hat{P}_u^{(k+1,j-1)}\Vert \leq 10^{-5}$, $k = 0,1, \dots, 8$, and convergence is obtained:
\bex
\begin{aligned}
  \hat{P}_u^{(1,\ast)} = \begin{bmatrix} 2.0795&	-0.6274&	0.5586&0.1212\\-0.6274&	2.9162&	1.5361&	1.3055\\0.5586&	1.5361&	2.5822&	0.9784\\0.1212&	1.3055&	0.9784&	1.6321\end{bmatrix}, \quad \hat{P}_u^{(2,\ast)}= \begin{bmatrix} 2.5583&	-1.4013&	0.0200&	-0.3751\\-1.4013&	5.4209&	4.0395&	2.7529 \\ 0.0200&	4.0395&	5.5042&	2.4334\\-0.3751&	2.7529&	2.4334&	2.6540 \end{bmatrix},
\end{aligned}
\eex
\bex
\begin{aligned}
  \hat{P}_u^{(3,\ast)}=\begin{bmatrix} 2.9851&-2.2777&-0.7285&-0.9177\\-2.2777&7.7588&6.2857&4.1544\\ -0.7285&6.2857&7.8694&3.7968\\-0.9177&4.1544&3.7968&3.6121 \end{bmatrix}, \quad \hat{P}_u^{(4,\ast)}=\begin{bmatrix} 3.2803&-2.8847&-1.2576&-1.2985\\-2.8847&9.2831&7.7383&5.0961\\ -1.2576&7.7383&9.3697&4.7093\\-1.2985&5.0961&4.7093&4.2636 \end{bmatrix},
\end{aligned}
\eex

\bex
\begin{aligned}
  \hat{P}_u^{(5,\ast)}=\begin{bmatrix} 3.4157&	-3.1633&	-1.5068&	-1.4760\\
    -3.1633&	9.9803&	8.4148&	5.5367\\
    -1.5068&	8.4148&	10.0770&	5.1456\\
    -1.4760&	5.5367&	5.1456&	4.5725 \end{bmatrix}, \cdots, \hat{P}_u^{(9,\ast)}=\begin{bmatrix}3.4522&	-3.2384	&-1.5757&	-1.5246\\
      -3.2384&	10.1695&	8.6024&	5.6584\\
      -1.5757&	8.6024&	10.2767&	5.2692\\
      -1.5246&	5.6584&	5.2692&	4.6589\end{bmatrix},
\end{aligned}
\eex

From Figure \ref{fig2}, it is shown that the outer loop of Algorithm \ref{algo2} is terminated after $k = 9$ iterations.

For comparison, we take the result 
\bex
\begin{aligned}
  P=\begin{bmatrix}3.4025&	-3.1333&	-1.4814&	-1.4601\\
    -3.1333&	9.9070&	8.3560&	5.5014\\
    -1.4814&	8.3560&	10.0285&	5.1207\\
    -1.4601&	5.5014&	5.1207&	4.5561 \end{bmatrix}
\end{aligned}
\eex
obtained from the model-based PI Algorithm \ref{algo1} as the true value. The error between the converged value $ \hat{P}_u^{(9,\ast)}$ from Algorithm \ref{algo2} and the true value $P$ is $\Vert \hat{P}_u^{(9,\ast)} - P\Vert = 0.6374$. It can be seen that, in the presence of estimation errors, the algorithm can converge to a small neighborhood around the true value. 

Similarly, to demonstrate that the feedback control pair $(u,v)= \left(\hat{L}_{u}^{(\ast)}X,\hat{L}_{v}^{(\ast)}X \right)$ is stabilizing, we present the state trajectory of the system under this input control in Figure \ref{ex2-xstable}.

\begin{figure}[!htp]
  \centering 
  \includegraphics[width=0.8\textwidth]{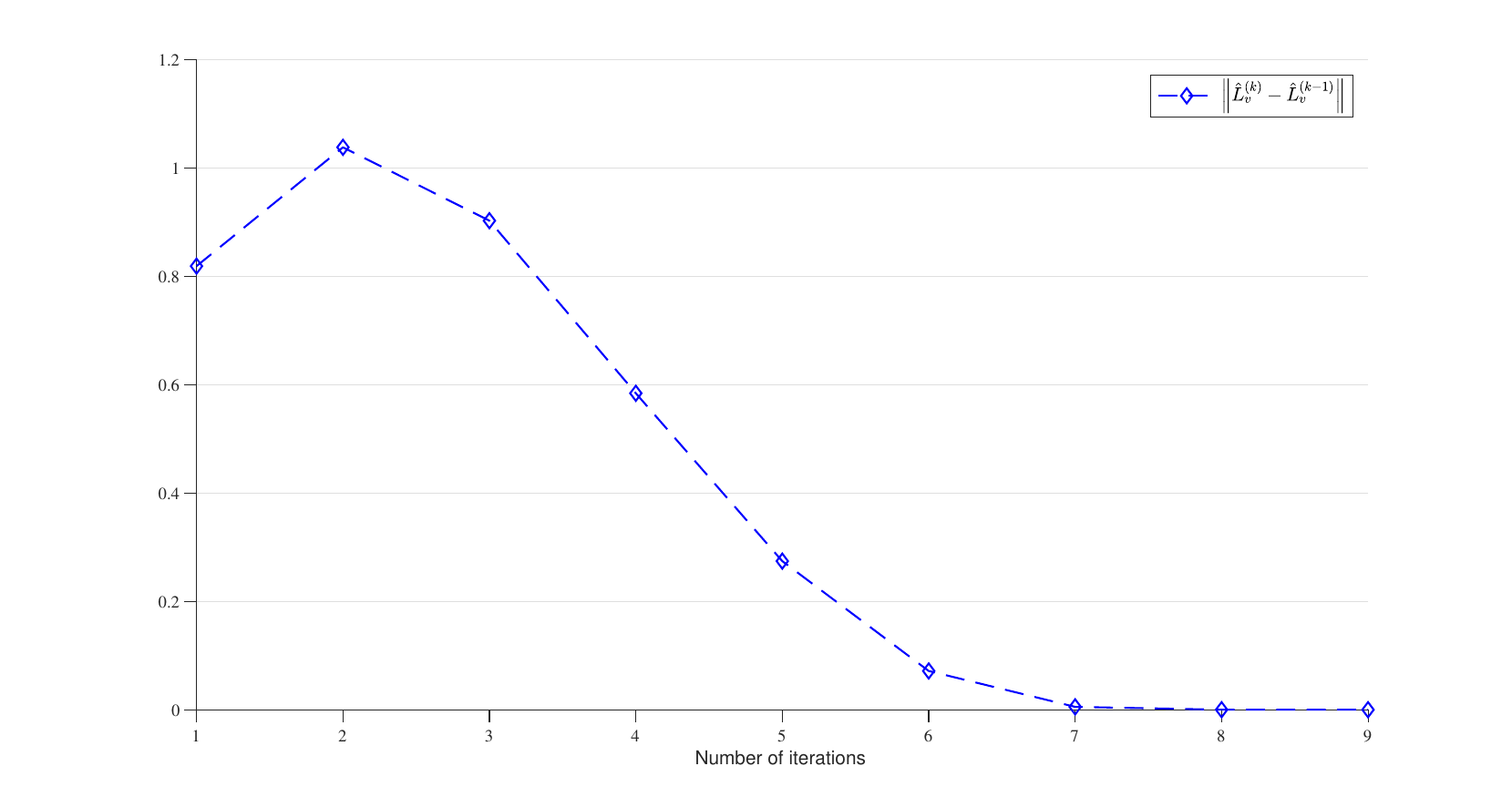}
  \caption{Convergence of the $\hat{L}_v^{(k)}$ parameters of the outer loop.}
  \label{fig2} 
\end{figure}

\begin{figure}[!htp]
  \centering 
  \includegraphics[width=0.8\textwidth]{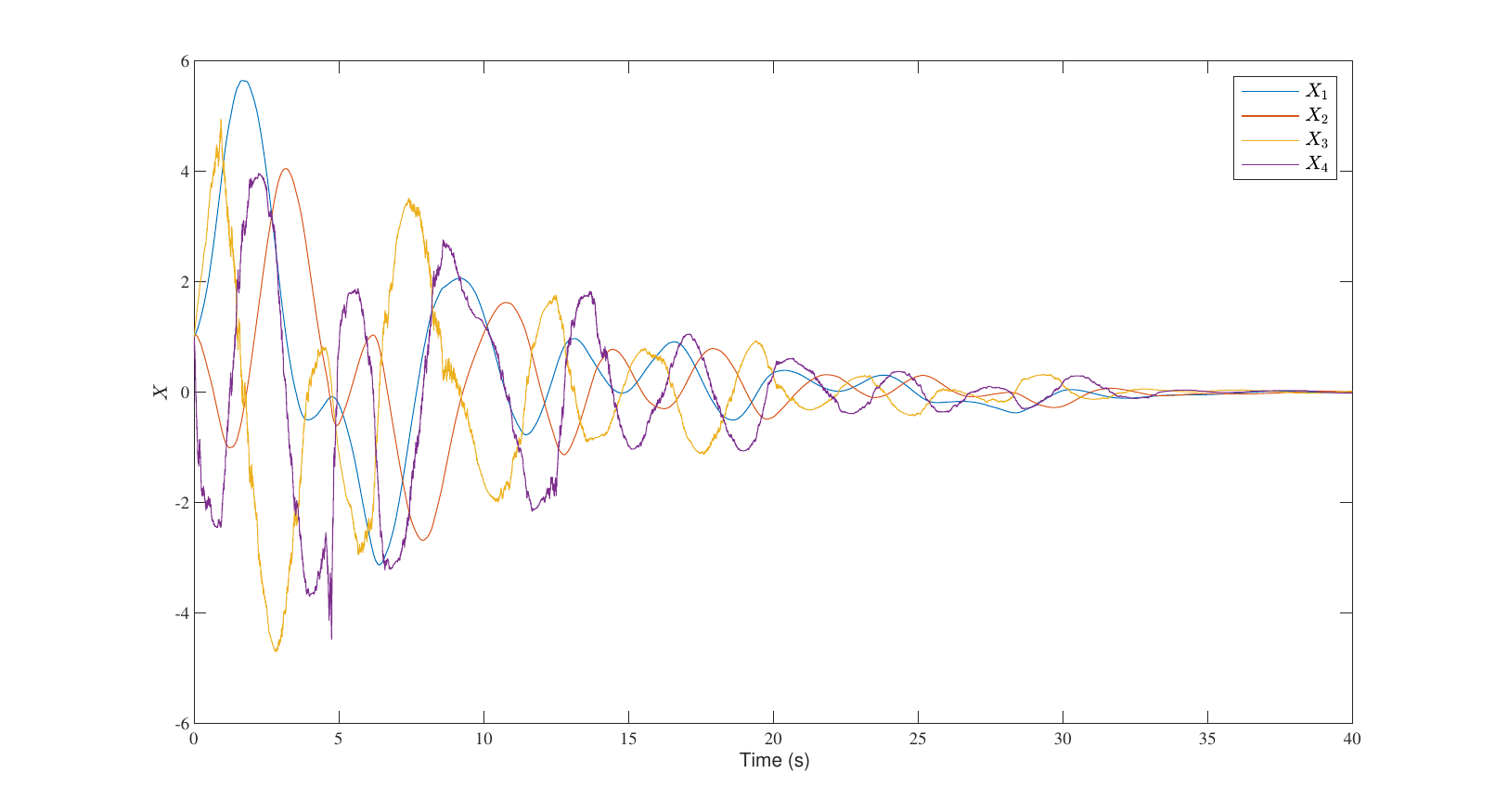}
  \caption{The state trajectory of system \eqref{sde} under the input control pair $(u,v)= \left(\hat{L}_{u}^{(\ast)}X,\hat{L}_{v}^{(\ast)}X \right)$.}
  \label{ex2-xstable} 
\end{figure}

%% The Appendices part is started with the command \appendix;
%% appendix sections are then done as normal sections
%% \appendix

%% \section{}
%% \label{}

%% For citations use: 
%%       \citet{<label>} ==> Jones et al. [21]
%%       \citep{<label>} ==> [21]
%%

%% If you have bibdatabase file and want bibtex to generate the
%% bibitems, please use
%%
%%  \bibliographystyle{elsarticle-num-names} 
%%  \bibliography{<your bibdatabase>}

%% else use the following coding to input the bibitems directly in the
%% TeX file.

% \begin{thebibliography}{00}

% %% \bibitem[Author(year)]{label}
% %% Text of bibliographic item

% \bibitem[ ()]{}

% \end{thebibliography}
\bibliographystyle{elsarticle-num-names} 
\bibliography{References}
\end{document}